\theoremstyle{plain}                    %------- 'regular' theorem types
\theoremstyle{plain}
\newtheorem{theorem}{Theorem}[section]
\newtheorem{lemma}[theorem]{Lemma}
\newtheorem{proposition}[theorem]{Proposition}
\newtheorem{corollary}[theorem]{Corollary}
\theoremstyle{definition}
\theoremstyle{remark}
\newtheorem{remark}[theorem]{Remark}
\DeclareMathOperator{\zero}{\mathcal{Z}}
\DeclareMathOperator{\rec}{Rec}
\newcommand{\NN}{\mathbb{N}} %natural numbers
\newcommand{\holder}{H\"{o}lder }
\newcommand{\eps}{\epsilon}
\newcommand{\kz}{F}
\newcommand{\kzz}{V}
\newcommand{\di}{\mathcal{G}}
\DeclareMathOperator{\sign}{sign}
\begin{document}

\title{Isolated zeros for Brownian motion with variable drift}
%\author{Ton\'{c}i Antunovi\'{c}, Krzysztof Burdzy, Yuval Peres and Julia Ruscher}

\address{Ton\'{c}i Antunovi\'{c}\\
University of California, Berkeley\\
Department of Mathematics\\
Berkeley, CA 94720
}
\email{tantun@math.berkeley.edu}

\address{Krzysztof Burdzy\\
University of Washington\\
Department of Mathematics\\
Seattle, WA 98195}
\email{burdzy@math.washington.edu}

\address{Yuval Peres\\
Microsoft Research\\
Theory Group\\
Redmond, WA 98052}
\email{peres@microsoft.com}

\address{Julia Ruscher\\
Fachbereich Mathematik, Sekr. MA 7-4\\
Technische Universit\"{a}t Berlin\\
Strasse des 17. Juni 136\\
D-10623 Berlin
}
\email{ruscher@math.tu-berlin.de}

\author[T. Antunovi\'{c}]{Ton\'{c}i Antunovi\'{c}}
%\address{University of California, Berkeley,
%Department of Mathematics,
%Berkeley, CA 94720
%}
%\email{tantun@math.berkeley.edu}

\author[K. Burdzy]{Krzysztof Burdzy}
%\address{
%University of Washington,
%Department of Mathematics,
%Seattle, WA 98195}
%\email{burdzy@math.washington.edu}

\author[Y. Peres]{Yuval Peres}
%\address{
%Microsoft Research,
%Theory Group,
%Redmond, WA 98052}
%\email{peres@microsoft.com}

\author[J. Ruscher]{Julia Ruscher}
%\address{
%Fachbereich Mathematik, Sekr. MA 7-4,
%Technische Universit\"{a}t Berlin,
%Strasse des 17. Juni 136,
%D-10623 Berlin
%}
%\email{ruscher@math.tu-berlin.de}

\subjclass[2010]{Primary 60J65, 26A16, 26A30, 28A78}
\keywords{Brownian motion, H\"{o}lder continuity, Cantor function, isolated zeros, Hausdorff dimension}

\maketitle

\begin{abstract}
%A simple property of Brownian Motion is not having isolated zeros almost surely. The Cameron-Martin Theorem implies that we can add certain continiuous functions (those lying in the Dirichlet space) to Brownian motion such that this property is still preserved. But can we add a deterministic drift to Brownian motion such that we recover isolated zeros with positiv probability? In this paper we give an answer to this question. In particular, we show that there are H\"{o}lder-continuous functions for every parameter $<\frac{1}{2}$ for which it is true.
It is well known that standard one-dimensional Brownian motion $B(t)$ has no isolated zeros almost surely. 
%We show that in general adding an $\alpha$-H\"{o}lder continuous drift for $\alpha<1/2$ does not preserve this property.
We show that for any $\alpha<1/2$ there are $\alpha$-H\"{o}lder continuous functions $f$ for which the process $B-f$ has isolated zeros with positive probability.
%Moreover, we give some bounds on the Hausdorff dimension of the zero set of Brownian motion with drift.
We also prove that for any continuous function $f$, the zero set of $B-f$ has Hausdorff dimension at least $1/2$ with positive probability, and $1/2$ is an upper bound on the Hausdorff dimension if $f$ is $1/2$-H\"{o}lder continuous or of bounded variation.
\end{abstract}

\section{Introduction}
Let $B$ be standard one-dimensional Brownian motion and $f \colon I \to \mathbb{R}$ a continuous function defined on some interval $I \subset \mathbb{R}^+$. A standard result is that the zero set of $B$ has no isolated points almost surely, see Theorem 2.28 in \cite{MP}. By the Cameron-Martin theorem (see Theorem 1.38 in \cite{MP} or Theorem 2.2 in Chapter 8 in \cite{RY}) the zero set of the process $B-f$ has no isolated points almost surely if $f$ is in the Cameron-Martin space $\mathbf{D}(I)$ (integrals of functions in $\mathbf{L}^2(I)$).
We will prove that the same is true for any function $f$ which is $1/2$-H\"{o}lder continuous. Since all functions in $\mathbf{D}(I)$ are $1/2$-H\"{o}lder continuous, this is a stronger statement than the one implied by the Cameron-Martin theorem.
For any function $g$ defined on some subset (or the whole) of $\mathbb{R}^+$ denote by $\zero(g)$ the set of zeros of $g$ in $(0,\infty)$. We remove the origin from consideration since the origin is an isolated zero of the process $B-f$ for any $f$ growing fast enough in the neighborhood of the origin, say $f(t) > t^{1/3}$.

\begin{proposition}
\label{thm:Holder-1/2}
For $f \colon \mathbb{R}^+ \to \mathbb{R}$ which is $1/2$-H\"{o}lder continuous on compact intervals, the set $\zero(B-f)$ has no isolated points almost surely.
\end{proposition}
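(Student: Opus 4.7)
The plan is to adapt the classical argument that standard Brownian motion has no isolated zeros, replacing Blumenthal's zero--one law (used to show instantaneous oscillation after a hitting time) with the law of the iterated logarithm. The key observation is that the LIL says Brownian increments $W(s)$ oscillate on the scale $\sqrt{2s\log\log(1/s)}$, which strictly dominates any $C\sqrt{s}$ envelope and therefore defeats an arbitrary $1/2$-H\"older perturbation. This is exactly what places the exponent $\alpha=1/2$ at the critical threshold.

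For each rational $q>0$ set $T_q := \inf\{t>q : (B-f)(t)=0\}$, with the convention $\inf\emptyset=\infty$. Since $f$ is deterministic and continuous, $T_q$ is a stopping time for the natural filtration of $B$. On the event $\{T_q<\infty\}$ the strong Markov property of Brownian motion implies that $W(s):=B(T_q+s)-B(T_q)$ is again a standard Brownian motion. Because $(B-f)(T_q)=0$, one may write $(B-f)(T_q+s)=W(s)-g(s)$, where $g(s):=f(T_q+s)-f(T_q)$ satisfies $|g(s)|\le C\sqrt{s}$ for all sufficiently small $s$; here $C=C(\omega)$ is the H\"older constant of $f$ on the compact interval $[T_q,T_q+1]$, which is a.s.\ finite.

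By the LIL applied to $W$, almost surely there exist sequences $s_n^{+}\downarrow 0$ and $s_n^{-}\downarrow 0$ with $W(s_n^{+})\ge 2C\sqrt{s_n^{+}}$ and $W(s_n^{-})\le -2C\sqrt{s_n^{-}}$. Consequently $(W-g)(s_n^{+})>0$ and $(W-g)(s_n^{-})<0$, and continuity forces a zero of $W-g$ between them. Hence $\zero(B-f)$ accumulates at $T_q$ from the right almost surely on $\{T_q<\infty\}$. Taking a countable intersection over rational $q$, a single null set suffices for all these statements simultaneously. Now suppose that $t_0>0$ were an isolated zero of $B-f$, so that no other zero lies in $(t_0-\varepsilon,t_0+\varepsilon)$ for some $\varepsilon>0$ with $t_0-\varepsilon>0$. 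Pick any rational $q\in(t_0-\varepsilon,t_0)$; then necessarily $T_q=t_0$, which contradicts the accumulation property just established. Therefore $\zero(B-f)$ has no isolated points a.s.

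The one delicate point is the comparison $W(s)\gg C\sqrt{s}$ in the limsup/liminf sense: for H\"older exponents $\alpha>1/2$ the perturbation $g$ is $o(\sqrt{s})$ and a bare modulus-of-continuity argument for $B$ would do, but at the critical exponent $\alpha=1/2$ one needs the extra $\sqrt{\log\log(1/s)}$ factor provided by the LIL. Everything else -- the stopping-time construction, strong Markov, and the contradiction via a well-chosen rational -- is a direct translation of the standard argument for $f\equiv 0$.
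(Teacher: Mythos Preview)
Your proof is correct and follows the same skeleton as the paper's: enumerate first hitting times $T_q$ after rationals $q$, apply the strong Markov property, and show the restarted Brownian motion oscillates enough to force zeros accumulating at $T_q$ from the right. The paper, however, obtains the proposition as an immediate corollary of a more general result (Proposition~\ref{prop:basic_facts}(ii)), which shows that for \emph{any} continuous $f$ the isolated zeros of $B-f$ must lie in the set where $(f(t+h)-f(t))/\sqrt{h}\to\pm\infty$ as $h\downarrow 0$; for $1/2$-H\"older $f$ this set is empty. In proving that proposition the paper does not invoke the LIL but instead uses Brownian scaling to get $\mathbb{P}\big(W(s_n)\ge M\sqrt{s_n}\,\big|\,\mathcal{F}_{\tau_q}\big)=\mathbb{P}(B(1)\ge M)>0$ for a suitable sequence $s_n\downarrow 0$, and then combines the $\limsup$ bound of Remark~\ref{6.30.1}(i) with Blumenthal's $0$--$1$ law. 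Your LIL shortcut is cleaner for the specific $1/2$-H\"older statement, while the paper's route buys the general localization of isolated zeros for arbitrary continuous drift.
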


The condition that $f$ is $1/2$-H\"{o}lder continuous is sharp in the following sense.

\begin{theorem}
\label{thm: isolated_zeros}
For every $\alpha<1/2$ there is an $\alpha$-\holder continuous function $f \colon \mathbb{R}^+ \to \mathbb{R}$ such that the set $\zero(B-f)$ has isolated points with positive probability.
\end{theorem}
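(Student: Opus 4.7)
Fix $\alpha<1/2$ and pick $\beta\in(\alpha,1/2)$. By L\'evy's uniform modulus of continuity, for almost every $\omega$ one has $\limsup_{t\to t_0}|B(t)-B(t_0)|/|t-t_0|^\beta=0$ at every $t_0$ in any given compact interval. The plan is to construct a deterministic $\alpha$-H\"older $f$ whose graph has a one-sided $\alpha$-cusp at enough candidate points that, with positive probability, some candidate is a zero of $B-f$; the modulus estimate will then force that zero to be isolated.

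\textbf{Local isolation at a cusp.} Suppose $f$ is $\alpha$-H\"older near some $t_0$ and satisfies $f(t)-f(t_0)\ge c|t-t_0|^\alpha$ for $t$ in a neighborhood of $t_0$ (a V-shape cusp; a monotone/signed version is analogous). Since $\alpha<\beta$, the modulus bound gives a random $\delta>0$ with $|B(t)-B(t_0)|<c|t-t_0|^\alpha\le f(t)-f(t_0)$ on $0<|t-t_0|<\delta$. Hence $B(t)-f(t)<B(t_0)-f(t_0)$ in this punctured neighborhood, and if in addition $B(t_0)=f(t_0)$, then $t_0$ is isolated in $\zero(B-f)$.

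\textbf{Constructing $f$.} A natural $\alpha$-H\"older candidate is a superposition
\[
f(t)\;=\;\sum_{n\ge 1}a_n|t-q_n|^\alpha+h(t),
\]
where $\{q_n\}$ is a countable cusp set, $\sum_n a_n<\infty$ (so $f$ is $\alpha$-H\"older with constant at most $\sum_n a_n$), and $h$ is a smooth background used to prescribe the values $f(q_n)$. The $n$-th summand produces a lower bound $(a_n/2)|t-q_n|^\alpha$ near $q_n$ while the remaining summands are smooth there and contribute only a Lipschitz perturbation negligible at small scales, so $f$ has a genuine $\alpha$-cusp at each $q_n$.

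\textbf{The main obstacle.} Any single equation $B(q_n)=f(q_n)$ has probability zero, and a countable union of such events is still null, so a naive placement of cusps fails. The remedy is a multi-scale construction: cluster the $q_n$ on a geometric sequence converging to some reference time, calibrate the amplitudes $a_n$ and values $f(q_n)$ to sweep through the LIL-range of $B$ at the corresponding scale, and then exploit Brownian scaling to view the events at successive scales as approximately independent, finally invoking a Borel--Cantelli type argument to obtain positive probability that at some scale the zero of $B-f$ coincides with a cusp point $q_n$. Forcing the crossing of $B-f$ through $0$ to land on a cusp, rather than on the smooth interpolation between cusps, is the delicate step; once this is in hand, the local isolation argument above completes the proof.
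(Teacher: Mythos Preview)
Your local isolation argument is correct and matches the paper's Proposition~\ref{prop:basic_facts}(i). The problem is the hitting step, and the obstacle you yourself flag is fatal for the construction you propose. With a \emph{countable} cusp set $\{q_n\}$, the event $\{\exists n: B(q_n)=f(q_n)\}$ has probability zero, full stop; no multi-scale clustering, amplitude calibration, or Borel--Cantelli manoeuvre can change this, because you are asking for an exact equality of a continuously distributed random variable with a deterministic value at countably many times. The ``delicate step'' of forcing the crossing of $B-f$ to land exactly on some $q_n$ rather than on the smooth interpolation between cusps is not merely delicate---with a countable cusp set it is impossible. Approximate hits ($|B(q_n)-f(q_n)|$ small) do not help either, since then the actual zero lies on the smooth part of $f$ and the isolation argument no longer applies.

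What the paper does instead is arrange the cusp behaviour on an \emph{uncountable} set. It takes $f=f_\gamma$ to be the Cantor function for $\gamma<1/4$, which is $\log 2/\log(1/\gamma)$-H\"older, and shows via a second-moment argument (Theorem~\ref{prop: hitting characterisation}) that $\mathbb{P}(\zero(B-f_\gamma)\cap C_\gamma\ne\emptyset)>0$. A refinement then shows that with positive probability the zero lands in a subset of $C_\gamma$ away from the preimages of short dyadic neighbourhoods, where one has a uniform lower bound $|f_\gamma(s)-f_\gamma(t)|\ge c|s-t|^{\alpha'}$ for some $\alpha'<1/2$; Proposition~\ref{prop:basic_facts}(i) then gives isolation. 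The essential point your sketch is missing is that the set of ``cusp'' points must be large enough (uncountable, with positive parabolic capacity in the relevant sense) for Brownian motion to hit it with positive probability; a countable sum of explicit cusps cannot achieve this.
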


%We will provide two examples of such drifts. 
Theorem \ref{thm: isolated_zeros} will follow directly from Proposition \ref{prop:isolated_zeros_first_example}.
%KB The example of a drift from 
An example of function $f$ satisfying
 Theorem \ref{thm: isolated_zeros} is given in Section \ref{Isolated zeros - first example}. 
%It is a standard Cantor function, also known as the devil's staircase, and is defined by (\ref{eq: cantor function definition}) and shifted to be defined on an interval that does not contain the origin. 
For $\gamma<1/2$, let $C_\gamma$ denote the middle $(1-2\gamma)$-Cantor set 
%KB and $f_\gamma$ the corresponding Cantor function, %KB %KB defined by (\ref{eq: cantor function definition}) and %KB shifted to be defined on an interval that does not %KB %KB contain the origin,  see Figure \ref{fig:cantor} %KB %KB (precise definitions are given at the beginning of  %KB Section \ref{Isolated zeros - first example}). 
and let $f_\gamma$ be the corresponding Cantor function, shifted to an interval away from the origin; see Figure \ref{fig:cantor} and Section \ref{Isolated zeros - first example} for a precise definition. 
 %We define the steep points of $f_\gamma$ as $S_\gamma=\{(t,f_\gamma(t)): t \in C_\gamma\}$, which we consider as the subset of the graph of $f_\gamma$. 
%KB It turns out that for 
For
 $\gamma<1/4$, the set $\zero(B-f_\gamma)$ 
%KB will have 
has
 isolated zeros with positive probability, and all 
%KB of them will be 
such zeros are
 contained in the Cantor set $C_\gamma$. The 
%KB proof
proof of this claim
 consists of constructing a subset of $C_\gamma$ which 
%KB will contain 
contains
 zeros of $B-f_\gamma$ with positive probability, and in which any zero 
%KB will be
is
 isolated. En route we obtain the following result of independent interest.
%The proof consists of showing that all the zeros in the set $C_\gamma$ are isolated and that zeros exist in $C_\gamma$ with positive probability. Surprisingly, the latter claim is true for all values of $\gamma$ except for $\gamma=1/4$. 

\begin{theorem}
  \label{prop: hitting characterisation}
Let $f_\gamma$ be a Cantor function. Then $\mathbb{P}(\zero(B-f_\gamma) \cap C_\gamma \neq \emptyset)>0$ if and only if $\gamma \neq 1/4$.
\end{theorem}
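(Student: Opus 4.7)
The plan is to treat the three regimes $\gamma>1/4$, $\gamma<1/4$, and $\gamma=1/4$ separately: in the first two I show the hitting probability is positive, in the third I show it is zero. The main obstacle is the critical case $\gamma=1/4$, where the standard energy-based approach degenerates at the borderline scaling.

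For $\gamma>1/4$, I would use a Paley--Zygmund second moment argument with the natural self-similar probability measure $\nu$ on $C_\gamma$, which is Ahlfors regular of dimension $\alpha:=\log 2/\log(1/\gamma)>1/2$. For $\eps>0$, set
\[
Z_\eps \;=\; \frac{1}{2\eps}\int_{C_\gamma}\one\{|B(t)-f_\gamma(t)|<\eps\}\, d\nu(t).
\]
Using the Gaussian transition density one checks that $\Ew[Z_\eps]\to\int p_t(f_\gamma(t))\,d\nu(t)\in(0,\infty)$ as $\eps\to 0$. The second moment is dominated, up to a negligible diagonal contribution, by
\[
\iint \frac{1}{\sqrt{|t-s|}}\exp\!\left(-\frac{(f_\gamma(t)-f_\gamma(s))^2}{2|t-s|}\right)d\nu(t)\,d\nu(s).
\]
Since $f_\gamma$ is $\alpha$-H\"{o}lder on $C_\gamma$ with $\alpha>1/2$, the exponential factor stays bounded above by $1$, and the remaining $1/2$-Riesz energy of the Ahlfors-regular measure $\nu$ is finite precisely because $\dim_H C_\gamma >1/2$. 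Paley--Zygmund then yields $\PP(Z_\eps>0)\geq c>0$ uniformly in $\eps$, and a compactness argument combined with a.s.\ continuity of $B$ extracts a point $t_\ast\in C_\gamma$ with $B(t_\ast)=f_\gamma(t_\ast)$ on a positive-probability event.

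For $\gamma<1/4$ the claim follows from the random subset construction discussed in the paragraph immediately before the theorem: one walks down a branch of the Cantor binary tree chosen adaptively so that sign changes of $B-f_\gamma$ at the endpoints of nested Cantor intervals force zeros inside by the intermediate value theorem, producing a subset $S\subset C_\gamma$ that contains a zero of $B-f_\gamma$ with positive probability. Since $S\subset C_\gamma$, the hitting statement is immediate.

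The critical case $\gamma=1/4$ is the crux. Here the same energy computation as above, now with the kernel exponent matching $\dim C_{1/4}=1/2$, shows that the $1/2$-Riesz energy of any natural measure on $C_{1/4}$ diverges logarithmically, so Paley--Zygmund is inconclusive. One does obtain that $C_{1/4}$ has zero $1/2$-capacity, whence $\PP(\zero(B)\cap C_{1/4}\neq\emptyset)=0$ by the classical capacity/hitting criterion for one-dimensional Brownian motion. The hard step is transferring this from $B$ to $B-f_{1/4}$: Cameron--Martin/Girsanov is unavailable because $f_{1/4}$ is a singular function (a.e.\ zero derivative, all its increase concentrated on $C_{1/4}$) and hence far outside the Cameron--Martin space. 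My plan is to exploit the exact scale invariance $(B(t),f_{1/4}(t))\mapsto(2B(t/4),2f_{1/4}(t/4))$ that preserves $B-f_{1/4}$ in law: letting $N_n$ count level-$n$ Cantor intervals meeting $\zero(B-f_{1/4})$, one has $\Ew[N_n]=O(1)$, and a conditional estimate using the self-similar recursion should contract $\PP(N_n\geq 1\mid \mathcal{F}_k)$ by a factor strictly less than one at each scale, forcing $\PP(N_n\geq 1)\to 0$. This last contraction, which requires careful tracking of the displacement $B(a_I)-f_{1/4}(a_I)$ at each level against the critical scaling, is where the principal technical work lies.
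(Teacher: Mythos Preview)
Your treatment of $\gamma>1/4$ is correct and is the same second--moment/Paley--Zygmund idea the paper uses, just packaged with the continuous measure $\nu$ instead of the discrete count $Z_{\gamma,n}=\sum_{I\in\mathfrak{C}_{\gamma,n}}\mathbf{1}\{B(t_I)\in[f_\gamma(s_I),f_\gamma(t_I)]\}$.

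There are two genuine gaps.

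\textbf{The case $\gamma<1/4$.} Your appeal to ``the random subset construction discussed in the paragraph immediately before the theorem'' is circular: that construction (Proposition~\ref{prop:isolated_zeros_first_example}) \emph{uses} the present theorem as input, not the other way around. Your sign--change/IVT sketch is also incomplete: a sign change of $B-f_\gamma$ across a level--$n$ Cantor interval forces a zero in that interval, but not necessarily in $C_\gamma$ (it may fall in the excised middle), and you have not shown the sign change can be propagated to all levels. The paper's actual argument here is again second moment on $Z_{\gamma,n}$, and the point you missed is that one cannot bound the exponential factor $\exp\bigl(-\tfrac{(f_\gamma(t)-f_\gamma(s))^2}{2|t-s|}\bigr)$ by $1$ as you did for $\gamma>1/4$: since $\dim C_\gamma<1/2$ the bare $1/2$--Riesz energy diverges. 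Instead, for $\gamma<1/4$ one has $|f_\gamma(t)-f_\gamma(s)|\gg |t-s|^{1/2}$ on $C_\gamma$, and this Gaussian tail, summed explicitly over pairs of level--$n$ subintervals, is exactly what keeps $\mathbb{E}(Z_{\gamma,n}^2)$ bounded.

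\textbf{The case $\gamma=1/4$.} You correctly identify this as the hard case and that Girsanov is unavailable, but your plan does not work as stated. At the critical scaling the self--similarity you invoke is \emph{exactly marginal}: $f_{1/4}$ increases by $2^{-n}$ over an interval of length $4^{-n}$, so the displacement $B-f_{1/4}$ is reproduced in law at every scale, and there is no contraction factor ``strictly less than one'' to be extracted from scaling alone. The paper breaks this degeneracy by a combinatorial argument rather than a scaling one. First it shows the reverse bound $\mathbb{P}(\zero(B-f_{1/4})\cap C_{1/4}\neq\emptyset)\leq K\inf_n\mathbb{P}(Z_{1/4,n}>0)$. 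Then it splits intervals $I\in\mathfrak{C}_{1/4,n}$ into \emph{balanced} (at least $n/3$ zeros among the $4$--ary digits of the left endpoint) and \emph{unbalanced}. For balanced $I$, conditioning on the leftmost balanced $I$ with $Z_n(I)$ and looking at its many ``right siblings'' gives $\mathbb{E}(Z_{1/4,n}\mid Z_n(I)\text{ for some balanced }I)\geq cn$, while $\mathbb{E}(Z_{1/4,n})\leq 1$, so the balanced contribution is $O(1/n)$. Unbalanced intervals are exponentially rare by large deviations, so their union bound contribution is $e^{-cn}$. Hence $\mathbb{P}(Z_{1/4,n}>0)\to 0$. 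This ``first--moment versus conditional first--moment'' trick is the missing idea in your proposal.
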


%KB Note that the 
The
 case $\gamma=1/4$ of the above theorem has already been resolved by Taylor and Watson (see Example 3 in \cite{TaylorWatson}). Their interest in the graph of the restriction $f_\gamma|_{C_\gamma}$ stemmed from the fact that, although the projection of this set
%KB to
on
 the vertical axis is an interval,
%KB this set is not hit by the graph of Brownian motion almost surely.
the graph of Brownian motion does not intersect this set almost surely.

\begin{figure}
%\begin{center}
%KB
\begin{comment} %%%%%%%%%%%%%%%%%%%%%%%%%%%%%%%%%%%%%%%%%%%%%%%%%%%
\begin{tikzpicture}[scale=1.7]
\def\c{0.4}
\setcounter{order}{1}
\cantorset[0,0,2,2]
\setcounter{order}{2}
\cantorset[3,0,5,2]
\setcounter{order}{5}
\cantorset[6,0,8,2]
\draw[line width=0.15pt] (0,0) rectangle (2,2);
\draw[line width=0.15pt] (3,0) rectangle (5,2);
\draw[line width=0.15pt] (6,0) rectangle (8,2);
\draw (0,-1pt) node [anchor=north] {$1$};
\draw (3,-1pt) node [anchor=north] {$1$};
\draw (6,-1pt) node [anchor=north] {$1$};
\draw (2,-1pt) node [anchor=north] {$2$};
\draw (5,-1pt) node [anchor=north] {$2$};
\draw (8,-1pt) node [anchor=north] {$2$};
\end{tikzpicture}
%KB
\end{comment} %%%%%%%%%%%%%%%%%%%%%%%%%%%%%%%%%%%%%%%%%

%\end{center}
\includegraphics{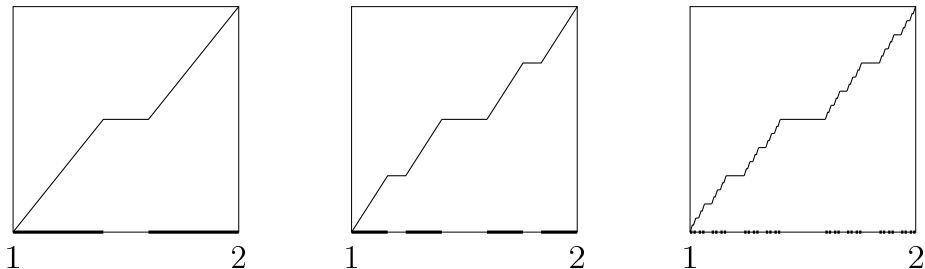}

\caption{Approximations of the Cantor function on the interval $[1,2]$ (functions $f_{\gamma,n}$ from the construction in Section \ref{Isolated zeros - first example}) for $\gamma=0.4$ and $n=1,2,5$. Approximations of the Cantor set (sets $C_{\gamma,n}$ from the construction in Section \ref{Isolated zeros - first example}) are drawn in bold.}
\label{fig:cantor}
\end{figure}

%In section \ref{Isolated zeros - second example} we provide another class of examples of functions $f$, for which the set $\zero(B-f)$ has isolated points with positive probability. However it will not be possible to construct these functions to be H\"{o}lder continuous with exponents arbitrarily close to $1/2$. This example will be a distribution function of a measure supported on a set of lower Hausdorff dimension. 

%Functions in both of these examples are monotone, which is natural since isolated zeros can occur only at points around which the drift is monotone. In general, adding a monotone drift to Brownian motion can produce isolated zeros only in a set where the drift increases (or decreases) very fast. In the following theorem we bound the Hausdorff dimension of such sets.

Part (ii) of Proposition \ref{prop:basic_facts} shows that isolated zeros of the process $B-f$ can occur only where the function $f$ increases or decreases very quickly. In the following theorem we bound the Hausdorff dimension of such sets. 
%KB Note that in the text $\dim(A)$ will always stand for the Hausdorff dimension of a set $A \subset \mathbb{R}^+$. 
The Hausdorff dimension of a set $A \subset \mathbb{R}^+$ will be denoted $\dim(A)$. 

\begin{theorem}
\label{set A_f has dim less than 1/2 }
For any %Borel measurable locally bounded 
continuous function $f \colon \mathbb{R}^+ \to \mathbb{R}$ there exists a set $A_f$, such that $\dim(A_f) \leq 1/2$ and such that, almost surely, all isolated points of $\zero(B-f)$ are contained in $A_f$.
\end{theorem}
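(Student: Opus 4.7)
The plan is to define $A_f$ using a deterministic necessary condition at isolated zeros, provided by Proposition \ref{prop:basic_facts}(ii). I expect this proposition to assert that if $t_0$ is an isolated point of $\zero(B-f)$, then there exist signs $\epsilon_+, \epsilon_- \in \{-1,+1\}$ and constants $c, \delta > 0$ such that
\begin{equation}
\label{eq:growth-plan}
\epsilon_+ (f(t_0+h) - f(t_0)) \ge c\sqrt{h}
\quad\text{and}\quad
\epsilon_-(f(t_0-h) - f(t_0)) \ge c\sqrt{h}
\end{equation}
for every $h \in (0, \delta)$. Intuitively, the Brownian fluctuations of $B$ near $t_0$ are of order $\sqrt{h}$, so unless $f$ separates from $f(t_0)$ by at least this much on each side with definite signs, $B - f$ would be forced back across zero infinitely often, contradicting isolation.

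Assuming such a characterization, define
\[
A_f \,:=\, \bigcup_{\epsilon_+, \epsilon_- \in \{\pm 1\}} \,\bigcup_{c, \delta \in \mathbb{Q}_{>0}} A_f^{c, \delta, \epsilon_+, \epsilon_-},
\]
where $A_f^{c, \delta, \epsilon_+, \epsilon_-}$ is the set of $t > 0$ at which \eqref{eq:growth-plan} holds with $t_0 = t$. This is a deterministic set depending only on $f$, and by approximating the (random) parameters from below by rationals it contains every isolated point of $\zero(B-f)$ almost surely. By countable stability of Hausdorff dimension, proving $\dim A_f \le 1/2$ reduces to bounding each $A_f^{c, \delta, \epsilon_+, \epsilon_-}$. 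In the matched-sign cases $(+,+)$ and $(-,-)$ the corresponding set is locally finite and hence zero-dimensional: if $t_1 < t_2$ both belonged to $A_f^{c, \delta, +, +}$ with $t_2 - t_1 < \delta$, the right-side bound at $t_1$ would give $f(t_2) > f(t_1)$ while the left-side bound at $t_2$ would give $f(t_1) > f(t_2)$, a contradiction. In the mixed-sign cases $(+,-)$ and $(-,+)$ the two bounds are compatible, and applied at $h = t_2 - t_1$ to any two nearby $t_1 < t_2$ in the set they yield the pairwise inequality
\[
|f(t_2) - f(t_1)| \ge c\sqrt{t_2 - t_1}.
\]
Consequently $f$ restricted to such a component is injective with $2$-H\"older inverse $|f^{-1}(y_2) - f^{-1}(y_1)| \le c^{-2}|y_2 - y_1|^2$; since a $\beta$-H\"older map sends a set of Hausdorff dimension $d$ to one of dimension at most $d/\beta$, and $f(A_f^{c, \delta, \pm, \mp}) \subset \mathbb{R}$ has dimension at most $1$, we conclude $\dim A_f^{c, \delta, \pm, \mp} \le 1/2$.

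The main obstacle is the first step: extracting from Proposition \ref{prop:basic_facts}(ii) the \emph{uniform-in-$h$} growth bound \eqref{eq:growth-plan}, valid for all small $h$, rather than merely along a subsequence. The dimension argument applies the pairwise inequality at the specific scale $h = t_2 - t_1$ for arbitrary close pairs in each component of $A_f$, so a sequential version of the growth condition would not suffice; the uniform formulation is essential. Once this deterministic condition is secured, the sign-case combinatorics and the H\"older-inverse dimension transfer are routine.
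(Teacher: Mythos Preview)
Your approach is correct, and the concern you flag is resolved by the actual content of Proposition~\ref{prop:basic_facts}(ii): it asserts that every isolated zero lies in $A_f^+\cup A_f^-$ where $\lim_{h\downarrow 0}(f(t+h)-f(t))/\sqrt{h}=\pm\infty$, which is precisely a uniform-in-$h$ bound of the form \eqref{eq:growth-plan} for \emph{every} $c>0$ on some $(0,\delta)$. Note, however, that the proposition gives only the \emph{right-side} condition; the left-side analogue is obtained separately in Remark~\ref{rem:basic_facts_extended} via time reversal.

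Your route differs from the paper's in two ways. First, you use a two-sided decomposition with four sign cases, whereas the paper works with the one-sided sets $A_f^\pm$ alone. In fact the one-sided condition already suffices for your own argument: if every $t$ in a set satisfies $f(t+h)-f(t)\ge c\sqrt{h}$ for $0<h<\delta$, then for any two points $t_1<t_2$ in that set with $t_2-t_1<\delta$ one has $f(t_2)-f(t_1)\ge c\sqrt{t_2-t_1}$ directly, and your H\"older-inverse transfer applies. So the $(\epsilon_+,\epsilon_-)$ case analysis can be dropped. Second, to pass from this reverse-H\"older inequality to the dimension bound, the paper uses a direct covering argument (Lemma~\ref{lemma:set_with_isolated_zeros}(i)): telescoping yields $\sum_i (t_{i+1}-t_i)^{1/2}\le n\bigl(f(t_k)-f(t_1)\bigr)$, which bounds the $1/2$-content of any dyadic cover. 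Your alternative---observing that $f$ is injective on the set with inverse satisfying $|f^{-1}(y_1)-f^{-1}(y_2)|\le c^{-2}|y_1-y_2|^2$, hence $\dim\le \tfrac{1}{2}\dim f(A)\le \tfrac12$---is a clean and equally valid way to extract the same conclusion.
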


%A typical way of measuring the size of the set $\zero(X)$ for a continuous process $X$ is through the notion of Hausdorff dimension.
It is a classical result that the zero set of Brownian motion has Hausdorff dimension  $1/2$ almost surely, see Theorem 4.24 in \cite{MP}.
Of course, for any compact interval $I$ not containing $0$ and any continuous function $f \colon I \to \mathbb{R}$ the event $\{\zero(B-f)=\emptyset\}$ will have a non-zero probability, and it is easy to construct a function $f \colon \mathbb{R}^+ \to \mathbb{R}$ with the same property. However, we prove that adding a continuous drift can not decrease the Hausdorff dimension of the zero set almost surely. This is the content of the following theorem.

\begin{theorem}
\label{thm:hausdorff_lower_bound}
For %any locally bounded Borel 
continuous function $f \colon \mathbb{R}^+ \to \mathbb{R}$, the set $\zero(B-f)$ has Hausdorff dimension greater %KB or equal than 
than or equal to
 $1/2$ with positive probability.
\end{theorem}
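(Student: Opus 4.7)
The plan is a second moment and Frostman energy argument applied to the approximate occupation measure of $B-f$ at level $0$. Since $f$ is continuous I localize to a fixed compact interval, say $[1,2]$, on which $|f|\leq M$ for some $M$. For $\epsilon>0$ I define the random measure on $[1,2]$
$$
\mu_\epsilon(dt) \;=\; \frac{1}{2\epsilon}\,\one_{\{|B(t)-f(t)|\leq \epsilon\}}\,dt.
$$
The Gaussian density of $B(t)$ is bounded away from $0$ uniformly on $[-M,M]$ for $t\in[1,2]$, while the joint density of $(B(t),B(u))$ for $1\leq t<u\leq 2$ is bounded above by $C(u-t)^{-1/2}$. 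These pointwise density bounds give, uniformly in small $\epsilon>0$,
$$
\Ew[\mu_\epsilon([1,2])] \;\geq\; c,\qquad \Ew[\mu_\epsilon([1,2])^2] \;\leq\; C,
$$
and, for each $s\in(0,1/2)$,
$$
\Ew \int_1^2 \int_1^2 \frac{d\mu_\epsilon(t)\,d\mu_\epsilon(u)}{|t-u|^s} \;\leq\; C \int_1^2 \int_1^2 \frac{dt\,du}{|t-u|^{s+1/2}} \;<\; \infty.
$$

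The Paley--Zygmund inequality applied to the first two moment bounds yields $\PP(\mu_\epsilon([1,2])\geq c/2)\geq \delta$ for some $\delta>0$ independent of $\epsilon$. The uniform second moment bound makes $\{\mu_\epsilon\}$ a tight family of random finite Borel measures on $[1,2]$, so I extract a subsequence $\epsilon_n\downarrow 0$ along which $\mu_{\epsilon_n}$ converges weakly in law to some random measure $\mu$; by Skorokhod's representation I may assume the convergence is almost sure. Since $\mu_{\epsilon_n}$ is concentrated on $\{t\in[1,2]: |B(t)-f(t)|\leq\epsilon_n\}$ and $B-f$ is continuous, the support of $\mu$ is contained in $\zero(B-f)\cap[1,2]$. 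Because total mass on the compact space $[1,2]$ is continuous under weak convergence, the Paley--Zygmund lower bound passes through (via a reverse Fatou on the events $\{\mu_{\epsilon_n}([1,2])\geq c/2\}$) to give $\PP(\mu([1,2])>0)>0$, and Fatou's lemma applied for each rational $s\in(0,1/2)$ gives that almost surely $\mu$ has finite $s$-energy for every such $s$.

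On the positive probability event $\{\mu([1,2])>0\}$, Frostman's energy theorem then delivers $\dim \zero(B-f) \geq s$ for every rational $s<1/2$, whence $\dim\zero(B-f)\geq 1/2$. The main technical step is the limiting argument of the previous paragraph: I must ensure both that the mass of $\mu_\epsilon$ does not entirely escape in the limit and that finite $s$-energy survives the passage. The second moment bound controls the former through Paley--Zygmund combined with tightness, while Fatou's lemma together with the separability of $(0,1/2)$ handles the latter; everything else reduces to the elementary Gaussian density estimates stated above.
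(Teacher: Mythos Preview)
Your energy/Frostman approach is a natural and ultimately valid strategy, genuinely different from the paper's proof, which instead uses Hawkes' fractal percolation: the paper intersects $\zero(B-f)$ with an independent random Cantor set $\Gamma[(\beta_n)]$ with $\beta_n\uparrow 1/2$, and a second moment argument on indicators shows $\mathbb{P}(\zero(B-f)\cap\Gamma[(\beta_n)]\neq\emptyset)>0$, from which Hawkes' theorem gives $\dim\zero(B-f)\geq 1/2$. Your route via occupation measures and the Frostman energy criterion is equally standard and arguably more direct, since it bypasses the auxiliary percolation construction.

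There is, however, a genuine gap in your limiting step. You take a subsequential weak limit in law of $\mu_{\epsilon_n}$ \emph{alone} and then invoke Skorokhod to assume almost sure convergence. On the Skorokhod space the new copies $\tilde\mu_{\epsilon_n}$ have the correct marginal laws, but they are no longer the occupation measures of any fixed Brownian motion: the coupling with $B$ is lost, so the sentence ``$\mu_{\epsilon_n}$ is concentrated on $\{t:|B(t)-f(t)|\leq\epsilon_n\}$'' is not available for the Skorokhod copies, and you cannot conclude $\mathrm{supp}\,\mu\subset\zero(B-f)$. The fix is either (a) apply tightness and Skorokhod to the \emph{pair} $(B|_{[0,2]},\mu_\epsilon)$ in $C([0,2])\times\mathcal{M}([1,2])$, so that the limit is $(\tilde B,\tilde\mu)$ with $\tilde B$ a Brownian motion and $\tilde\mu$ supported on $\zero(\tilde B-f)$; or, more simply, (b) avoid Skorokhod entirely: using Markov's inequality on total mass and on the $s$-energy, find $L,K$ so that $\mathbb{P}\big(\mu_{1/n}([1,2])\in[c/2,L],\ I_s(\mu_{1/n})\leq K\big)\geq\delta'>0$ for all $n$, and by Remark~\ref{6.30.1}(i) this occurs for infinitely many $n$ with probability at least $\delta'$. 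For such $\omega$, the deterministic sequence $(\mu_{1/n_k}(\omega))_k$ has bounded mass on the compact $[1,2]$, hence a weak limit point $\nu$ which inherits mass $\geq c/2$, $s$-energy $\leq K$ (by lower semicontinuity of the energy), and support in $\zero(B-f)$ (since the supports shrink to the zero set). This repairs the argument and yields the conclusion.
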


As the following example shows, upper bounds on the Hausdorff dimension of the zero set can not be obtained without additional assumptions on the drift $f$. 
Recall that fractional Brownian motion $B^{(H)} \colon \mathbb{R}^+ \to \mathbb{R}$ with Hurst index $0<H<1$ is a continuous, centered Gaussian process, such that $\mathbb{E}(|B^{(H)}(t)-B^{(H)}(s)|^2) = |t-s|^{2H}$.
Taking the drift $f$ to be an independent sample of fractional Brownian motion with Hurst index $H$, one gets that the Hausdorff dimension
%KB of the zero set is bounded from below as $\dim(\zero(B-f)) \geq 1-H$ 
$\dim(\zero(B-f))$ is bounded from below by $1-H$, 
 almost surely. This follows from the proof of the same fact for unperturbed fractional Brownian motion in Theorem 4 in Chapter 18 of \cite{Kahane} (see also Proposition 5.1 and Section 7.2 in \cite{BDG}).

\begin{theorem}
\label{thm:hausdorff_upper_bound}
Let $f \colon \mathbb{R}^+ \to \mathbb{R}$ be either $1/2$-H\"{o}lder continuous on compact intervals or of bounded variation on compact intervals. Then the Hausdorff dimension of $\zero(B-f)$ is at most
%KB or equal than $1/2$ 
%than or equal to 
$1/2$,
 almost surely.
\end{theorem}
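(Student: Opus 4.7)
The plan is to bound the upper box dimension $\overline{\dim}_{\text{box}}(\zero(B-f)\cap[T_0,T])$ by $1/2$ almost surely for arbitrary $0<T_0<T<\infty$; by countable exhaustion in $T_0,T$ and the inequality $\dim(\cdot)\leq \overline{\dim}_{\text{box}}(\cdot)$, this implies the theorem. Two ingredients drive the argument. First, L\'{e}vy's uniform modulus of continuity yields an almost surely finite random variable $K=K(\omega)$ with $|B(t)-B(s)|\leq K\sqrt{|t-s|\log(1/|t-s|)}$ for all $s,t\in[0,T]$ with $|t-s|$ sufficiently small. Second, a Gaussian density estimate on the sub-level set $S_\epsilon:=\{t\in[T_0,T]:|B(t)-f(t)|\leq\epsilon\}$ gives
\[
\mathbb{E}|S_\epsilon|\;\leq\;\int_{T_0}^T\frac{2\epsilon}{\sqrt{2\pi t}}\,dt\;\leq\;C_0\,\epsilon.
\]

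The core step is a covering of $\zero(B-f)\cap[T_0,T]$ at scale $\delta$. If $s$ is a zero and $|t-s|\leq\delta$, then
\[
|B(t)-f(t)|\;\leq\;|B(t)-B(s)|+|f(t)-f(s)|\;\leq\;K\sqrt{\delta\log(1/\delta)}+\omega_f(s,\delta),
\]
where $\omega_f(s,\delta)$ is either the H\"{o}lder modulus $L\sqrt{\delta}$ or the local variation $V(f;[s-\delta,s+\delta])$. In the H\"{o}lder case this bound is uniform in $s$, so the $\delta$-neighborhood of $\zero(B-f)\cap[T_0,T]$ lies inside $S_{\epsilon(\delta)}$ with $\epsilon(\delta)=(K+L)\sqrt{\delta\log(1/\delta)}$ and hence may be covered by at most $|S_{\epsilon(\delta)}|/\delta$ intervals of length $\delta$.

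For the BV case, split the zeros into \emph{slow} ones, $Z_1:=\{s\in\zero(B-f)\cap[T_0,T]:V(f;[s-\delta,s+\delta])\leq\sqrt{\delta}\}$, and their complement $Z_2$ of \emph{fast} zeros. On $Z_1$ the previous argument applies verbatim. For $Z_2$, apply a Vitali covering to $\{[s-\delta,s+\delta]:s\in Z_2\}$: extract a pairwise disjoint subfamily $\{[s_i-\delta,s_i+\delta]\}$ whose triples cover $Z_2$. Disjointness combined with subadditivity of total variation yields
\[
\#\{s_i\}\cdot\sqrt{\delta}\;\leq\;\sum_i V(f;[s_i-\delta,s_i+\delta])\;\leq\;V(f;[T_0-\delta,T+\delta]),
\]
so $Z_2$ is covered by $O(1/\sqrt{\delta})$ intervals of length $6\delta$, where the implicit constant depends only on the variation of $f$ on a slightly enlarged interval.

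Combining the two contributions, $\zero(B-f)\cap[T_0,T]$ is covered by $N_\delta\leq|S_{\epsilon(\delta)}|/\delta+O(1/\sqrt{\delta})$ intervals of length $O(\delta)$. To pass to an almost sure Hausdorff bound, restrict to the event $\Omega_M=\{K\leq M\}$, on which $\epsilon(\delta)$ is dominated by a deterministic quantity $\epsilon_M(\delta)=(M+L)\sqrt{\delta\log(1/\delta)}$. Taking expectations and invoking the bound on $\mathbb{E}|S_\epsilon|$, for any $\alpha>1/2$,
\[
\mathbb{E}\bigl[\mathcal{H}^{\alpha}_{6\delta}(\zero(B-f)\cap[T_0,T])\cdot\mathbf{1}_{\Omega_M}\bigr]\;\leq\;C_M\bigl(\delta^{\alpha-1/2}\sqrt{\log(1/\delta)}+\delta^{\alpha-1/2}\bigr)\;\longrightarrow\;0
\]
as $\delta\to0$. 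Fatou's lemma then gives $\mathcal{H}^\alpha(\zero(B-f)\cap[T_0,T])=0$ almost surely on $\Omega_M$; sending $M\to\infty$ and then $\alpha\downarrow 1/2$, $T_0\downarrow0$, $T\uparrow\infty$ completes the proof. The main obstacle is the BV case, where the local variation of $f$ can concentrate arbitrarily, and this is precisely what the Vitali decomposition into slow and fast zeros resolves.
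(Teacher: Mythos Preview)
Your argument is correct, and it follows a genuinely different route from the paper's.

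For the $1/2$-H\"older case, the paper bypasses L\'evy's modulus entirely: it proves directly, via the strong Markov property at the first zero in a dyadic interval $J$, that $\mathbb{P}(\zero(B-f)\cap J\neq\emptyset)\leq c|J|^{1/2}$, and then bounds $\mathbb{E}\sum_{J}|J|^{1/2}$ over the dyadic cover. Your approach instead converts the problem to an occupation-time estimate $\mathbb{E}|S_\epsilon|\leq C_0\epsilon$ together with the pathwise modulus, which is a more ``level-set'' viewpoint; both lead to the same first-moment computation.

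The real divergence is in the bounded-variation case. The paper first proves a purely deterministic lemma (via Besicovitch's covering theorem) that the set $\{t:\limsup_{h\downarrow 0}|f(t+h)-f(t)|/h^{1/2}>0\}$ has Hausdorff dimension at most $1/2$, then on its complement restricts $f$ to sets where it is genuinely $1/2$-H\"older, extends by linear interpolation to a globally H\"older function, and invokes the first case. Your scale-by-scale Vitali split into ``slow'' and ``fast'' zeros is more direct: it avoids the restriction--extension construction and handles the bad set of $f$ and the probabilistic part simultaneously at each $\delta$. The paper's approach has the advantage of isolating a reusable deterministic fact about BV functions; yours is shorter and yields, with a Borel--Cantelli step along $\delta_n=2^{-n}$, the stronger conclusion that the \emph{upper box dimension} of $\zero(B-f)\cap[T_0,T]$ is at most $1/2$ almost surely (your opening sentence promises this, though the Fatou argument you actually write only gives the Hausdorff bound).
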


Arguments from the proof of Theorem \ref{thm:hausdorff_lower_bound} and Theorem \ref{thm:hausdorff_upper_bound} imply the following corollary.

\begin{corollary}\label{cor:sharp_haudorff_for_holder}
If $f \colon [0,1] \to \mathbb{R}$ is a $1/2$-H\"{o}lder continuous function, such that $f(0)=0$, then the Hausdorff dimension of $\zero(B-f)$ is equal to $1/2$, almost surely.
\end{corollary}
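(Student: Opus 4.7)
The upper bound $\dim(\zero(B-f)) \leq 1/2$ almost surely is immediate from Theorem \ref{thm:hausdorff_upper_bound}, so I focus on the matching lower bound $\dim(\zero(B-f)) \geq 1/2$ almost surely. The plan is to combine Brownian scaling, compactness of a suitable class of Hölder drifts, and Blumenthal's $0$--$1$ law to promote the pointwise positive probability of Theorem \ref{thm:hausdorff_lower_bound} into an almost sure statement.

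Let $K$ be the Hölder constant of $f$ and let $\mathcal{G}_K$ denote the class of functions $g \colon [0,1] \to \mathbb{R}$ that are $1/2$-Hölder with constant $K$ and satisfy $g(0) = 0$. By Arzelà--Ascoli, $\mathcal{G}_K$ is compact in the supremum norm. For $\lambda \in (0,1]$ set $f_\lambda(s) := \lambda^{-1/2} f(\lambda s)$, so that $f_\lambda \in \mathcal{G}_K$. The rescaled process $B_\lambda(s) := \lambda^{-1/2} B(\lambda s)$ is a standard Brownian motion and satisfies $B_\lambda(s) - f_\lambda(s) = \lambda^{-1/2}(B(\lambda s) - f(\lambda s))$. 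Hence $\zero(B-f) \cap (0,\lambda] = \lambda \cdot (\zero(B_\lambda - f_\lambda) \cap (0,1])$, and since Hausdorff dimension is invariant under scaling,
\begin{equation*}
\mathbb{P}\bigl(\dim(\zero(B-f)\cap(0,\lambda]) \geq 1/2\bigr) = \mathbb{P}\bigl(\dim(\zero(B-f_\lambda)\cap(0,1]) \geq 1/2\bigr).
\end{equation*}

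The crucial intermediate claim is the uniform lower bound
\begin{equation*}
p_0 := \inf_{g \in \mathcal{G}_K} \mathbb{P}\bigl(\dim(\zero(B-g)\cap(0,1]) \geq 1/2\bigr) > 0.
\end{equation*}
For each individual $g$, positivity of this probability is given by Theorem \ref{thm:hausdorff_lower_bound}. To extract uniformity I would revisit the proof of that theorem: it constructs a random measure supported on $\zero(B-g)\cap(0,1]$ whose total mass is bounded below and whose $s$-energy is bounded above for every $s < 1/2$, with quantitative bounds that depend on $g$ only through its modulus of continuity on $[0,1]$. Combined with the compactness of $\mathcal{G}_K$, this converts the pointwise bound into the uniform bound $p_0 > 0$. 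Verifying this uniform dependence by tracing the energy estimates through the proof of Theorem \ref{thm:hausdorff_lower_bound} is the main technical obstacle.

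Granting the uniform bound, set $A_n := \{\dim(\zero(B-f) \cap (0, 1/n]) \geq 1/2\}$. The scaling identity applied with $\lambda = 1/n$ yields $\mathbb{P}(A_n) \geq p_0$ for every $n \geq 1$. The sequence $\{A_n\}$ is decreasing, so by monotone continuity $\mathbb{P}(\bigcap_n A_n) = \lim_n \mathbb{P}(A_n) \geq p_0 > 0$. Since $A_n$ is measurable with respect to $\sigma(B(s)\colon s \leq 1/n)$, the intersection lies in the germ $\sigma$-algebra $\mathcal{F}_{0+}$ of the Brownian motion, and Blumenthal's $0$--$1$ law forces $\mathbb{P}(\bigcap_n A_n) = 1$. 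In particular $\mathbb{P}(A_1) = 1$, which combined with the upper bound gives $\dim(\zero(B-f)) = 1/2$ almost surely.
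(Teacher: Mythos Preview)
Your strategy---Brownian scaling, a uniform lower bound on the probability across rescaled drifts, and Blumenthal's $0$--$1$ law---is exactly the paper's. The main difference is in how you obtain the uniform bound $p_0>0$. You propose to invoke Arzel\`a--Ascoli compactness of $\mathcal{G}_K$ together with a dependence of the energy estimates on the modulus of continuity; this is more than is needed. The paper instead records (Remark~\ref{rem: uniform bound for holder}) that in the proof of Theorem~\ref{thm:hausdorff_lower_bound} the constant $c_1$ in the first-moment estimate depends only on $\max_{t\in[1,2]}|g(t)|$, while the second-moment constant $C_3$ is universal. Since each rescaled drift $f_n(t)=2^{n/2}f(2^{-n}t)$ on $[1,2]$ satisfies $\max|f_n|\le K\sqrt{2}$, the uniform bound follows immediately---no compactness is required, and there is no need to worry about continuity of $g\mapsto\mathbb{P}(\dim\ge 1/2)$.

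A small technical point: the paper carries out the second-moment argument on $[1,2]$, not on $(0,1]$; staying away from the origin is what makes the Gaussian density upper bound in \eqref{eq:first_moment_any_drift} uniform. Your version on $(0,1]$ would need to restrict to, say, $[1/2,1]$ (or equivalently rescale to $[1,2]$ as the paper does) to make the estimates go through cleanly. Your decreasing-events formulation of the germ argument is a perfectly good variant of the paper's ``infinitely many $n$'' version.
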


%\begin{remark}
%label{rem:zeros_are_monotone}
%It is natural to expect that possible isolated zeros of $B-f$ are not local extremes. Actually, almost surely any zero of the process $X(t)=B(t)-f(t)$ is not a local extreme. To see this take an interval $[q_1,q_2]$ with $q_1>0$ and let $M$ be the maximum of $X(t)$ on this interval. Then, by Markov property $X(q_1)$ and $M-X(q_1)$ are independent and since $X(q_1)$ has a continuous distribution so has $M=(M-X(q_1))+X(q_1)$ and therefore, $\mathbb{P}(M=x)=0$ for any $x \in \mathbb{R}$. Taking $x=0$ and a union over all rational $q_1 < q_2$ proves the claim.
%\end{remark}

\begin{remark}
\label{rem:no_isolated_extremes}
For a function $g \colon \mathbb{R}^+ \to \mathbb{R}$, define $M_g(t)= \max_{0 \leq s \leq t}g(s)$ and 
%KB the 
denote the
 set of its record times 
%KB as 
by
 $\rec(g)=\{t>0:M_g(t)=g(t)\}$. For standard Brownian motion $B$, a result of L\'{e}vy says that the processes $(M_B(t)-B(t))_t$ and $(|B(t)|)_t$ have the same distribution (see e.g.~Theorem 2.34 in \cite{MP}), which implies that sets $\zero(B)$ and $\rec(B)$ have the same distribution (on the Borel sigma algebra of families of closed subsets of $\mathbb{R}^+$, generated by the Hausdorff metric). In general, for Brownian motion with drift there is no such correspondence. Actually, one can see that there are no isolated points in the set of record times of the process $B-f$ almost surely. 
%except maybe for $0$. 
This is proven in part (ii) of Proposition \ref{prop: zeros extremes and record times}.
%To show this, first note that any $t \in \rec(g)$ is a maximum of $g$ on the interval $[t-\epsilon,t]$, for some $\epsilon >0$.
%Let $t > 0$ be an isolated point in $\rec(g)$. Then, $t$ is a local maximum since otherwise we would have record times right of $t$ and arbitrarily close to $t$. Since there are no record times in the interval $(t-\epsilon,t)$, for some $\epsilon>0$ there has to be a record time $s<t$ which is also a local maximum and $g(s)=g(t)$.
%Applying these observations to $g(t)=X(t)=B(t)-f(t)$, we see that in order to prove the claim it is enough to show that the process $X(t)=B(t)-f(t)$ does not have two equal local maxima almost surely. This is well known for standard Brownian motion and can be proven in the same way for the process $X$.
%To end this take two disjoint intervals
%Namely, for two intervals $[q_1,r_1]$ and $[q_2,r_2]$, with $r_1 < q_2$ the random variables $Y_1 = \max_{q_1 \leq s \leq r_1}X(s)-X(r_1)$, $Y_2=X(q_2)-X(r_1)$ and $Y_3 = \max_{q_2 \leq s \leq r_2}X(s)-X(q_2)$ are independent.
%Moreover maxima on the intervals $I_1$ and $I_2$ will be equal if and only if $Y_1+Y_2+Y_3 = 0$.
%Since $Y_2$ is a continuous random variable, so is $Y_1+Y_2+Y_3$ and
%But this random variable is continuous because of independence and since $Y_2$ is continuous. Therefore,
%$\mathbb{P}(Y_1+Y_2+Y_3=0)=0$. Therefore almost surely maxima on $[q_1,r_1]$ and $[q_2,r_2]$ are different. Taking a union over all possible rational $q_1$, $r_1$, $q_2$ and $r_2$ as above proves the claim.
\end{remark}

%KB start new
\subsection{Related results}
Connection between H\"{o}lder continuity of drift $f$ and path properties of $B-f$ has already been observed.
For $d \geq 2$, a function $f \colon \mathbb{R}^+ \to \mathbb{R}^d$ is called polar if, for $d$-dimensional Brownian motion $B$ started at the origin and any point $x \in \mathbb{R}^d$, the probability that there is a $t>0$ such that $B(t)-f(t)=x$ is positive. In \cite{Grav} Graversen constructed $\alpha$-H\"{o}lder continuous functions which are polar for two dimensional Brownian motion, for $\alpha <1/2$. In \cite{LeGall} Le Gall showed that $1/2$-H\"{o}lder continuous functions are not polar for two dimensional Brownian motion and that the same conclusion holds in higher dimensions when $f$ satisfies a slightly stronger condition than $1/d$-H\"{o}lder continuity.
In a recent paper \cite{APV} it was shown that for any $\alpha<1/d$ there are $\alpha$-H\"{o}lder continuous functions $f$ such that the image of $B-f$ covers an open set almost surely.

We will briefly review some related results on intersections of Brownian trajectories with non-smooth paths.  
Let $S_\alpha $ be the family of all functions $f: [0,1]\to \mathbb{R}$ such that $\sup_{0\leq t \leq 1} |f(t)| \leq 1$ and $\sup_{0\leq s,t \leq 1} |f(s) - f(t)|/ |s-t|^\alpha \leq 1$. Define the local time of Brownian motion $B$ on $f$ by the formula $L^f_t = \lim_{\varepsilon\downarrow 0} (1/2\varepsilon)\int_0^t {\bf 1}_{[f(s) -\varepsilon, f(s) + \varepsilon]}(B(s))ds$. It was proved in \cite{BB1,BB2} that the supremum over $f \in S_\alpha$ of $L^f_1$ is finite for $\alpha > 5/6$ and infinite for $\alpha < 1/2$. The function $(f,t) \to L^f_t$ is continuous over $S_\alpha \times [0,1]$ for $\alpha > 5/6$.  

Suppose that $g: \mathbb{R}_+ \to \mathbb{R}$ is a continuous function and let $X$ be Brownian motion reflected on $g$; see \cite{BCS} for a precise definition. Let $A_g$ be the set of all $t>0$ such that $\mathbb{P}(X(t) =g(t))>0$. It was proved in \cite{BCS} that for every continuous function $g$ we have $\dim(A_g) \leq 1/2$ and for some continuous functions $g$ we have $\dim(A_g) = 1/2$.

\bigskip
%KB end new

\section{Isolated zeros - general results}\label{Isolated zeros - general results}

For an interval $I$ we denote its length by $|I|$ and say it is \textit{dyadic} if it is of the form $I=[k2^{m},(k+1)2^m]$ for integers $k>0$ and $m$. 
\begin{comment} %%%%%%%%%%%%%%%%%%%%%%%%%%%%%%%%%%%%%%%%%%%%%%%%%%%
%KB moved to later in the paper. Notation \mathcal{F} changed to \di

For a dyadic interval $I$, the set of its subintervals which are dyadic and of length $2^{-n}|I|$, will be denoted by $ \mathcal{F}_n(I)$.
\end{comment} %%%%%%%%%%%%%%%%%%%%%%%%%%%%%%%%%%%%%%%%%
For intervals $I$ and 
%KB $J$ write 
$J$, we will write
 $I < J$ if $J$ is located
%KB right 
to the right
 of $I$.

%KB new
\begin{remark} \label{6.30.1}
We will repeatedly use the following simple observations. 

(i) Suppose that $F_k$, $k\geq 1$, are events and for some $p>0$ and all $k$ we have $\mathbb{P}(F_k) \geq p$. Recall that $\limsup_k F_k = \bigcap_{n\geq 1} \bigcup_{k\geq n} F_k$ is the event that infinitely many $F_k$'s occur. Then $\mathbb{P}(\limsup_k F_k) \geq p$.

(ii) As an easy consequence of the Cauchy-Schwarz inequality we have
 $\mathbb{P}(Z>0) \mathbb{E} (Z^2) \geq (\mathbb{E} Z)^2$, for any nonegative variable $Z$. See Lemma 3.23 in \cite{MP}.

\end{remark}

%KB end new

%Let $B$ be the standard $1$-dimensional Brownian motion.

%In this section we prove Theorem \ref{thm:Holder-1/2} and a simple proposition which stands as the last argument in the proof of Theorem \ref{thm:isolated_zeros}. In the proof of Theorem \ref{thm:Holder-1/2} we use similar arguments as in the zero drift case from ??? in \cite{}.

%\begin{theorem}
%The process $B-F$ has no isolated zeros almost surely for $\frac{1}{2}$-H\"{o}lder continuous functions $F$.
%\end{theorem}

\begin{proposition}
\label{prop:basic_facts}
Let $f \colon \mathbb{R}^+ \to \mathbb{R}$ be a continuous function.
\begin{itemize}
\item[(i)] 
Let $A \subset \mathbb{R}^+$ be a set such that for any $t \in A$ there is an $\alpha < 1/2$ such that $\liminf_{s \to t}\frac{|f(s)-f(t)|}{|t-s|^\alpha} > 0$. Then, almost surely any point in $\zero(B-f) \cap A$ is isolated in $\zero(B-f)$.
%\item[(ii)] Almost surely all isolated points of $\zero(B-f)$ are located inside the set $A_f = \{t\in \mathbb{R}^+: \limsup_{h \downarrow 0}\frac{|f(t+h)-f(t)|}{\sqrt{h}} = \infty\}$.
\item[(ii)] 
Almost surely all isolated points of $\zero(B-f)$ are located inside the set $A_f^+ \cup A_f^-$, where 
$A_f^+ = \{t\in \mathbb{R}^+: \lim_{h \downarrow 0}\frac{f(t+h)-f(t)}{\sqrt{h}} = \infty\}$ and 
$A_f^- = \{t\in \mathbb{R}^+: \lim_{h \downarrow 0}\frac{f(t+h)-f(t)}{\sqrt{h}} = -\infty\}$.
\end{itemize}
\end{proposition}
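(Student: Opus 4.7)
For part (i), I would use the fact that almost surely, Brownian paths are locally $\beta$-H\"older on compact intervals for every $\beta<1/2$. Given such a path, fix $t\in\zero(B-f)\cap A$ and choose $\alpha<1/2$ satisfying the hypothesis; then for some $c>0$, $|f(s)-f(t)|\ge c|s-t|^\alpha$ for $s$ near $t$. Picking $\beta\in(\alpha,1/2)$ and using $B(t)=f(t)$, the reverse triangle inequality yields
\[
|B(s)-f(s)|\ge |f(s)-f(t)|-|B(s)-B(t)|\ge c|s-t|^\alpha-C|s-t|^\beta > 0
\]
for all $s\neq t$ sufficiently close to $t$, which shows that $t$ is isolated in $\zero(B-f)$.

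For part (ii) I take the contrapositive and proceed via the strong Markov property and a countable exhaustion. For each dyadic interval $I\subset\mathbb{R}^+$, let $\tau_I:=\inf\{t\in I:B(t)=f(t)\}$, a stopping time. Any isolated zero $t$ of $B-f$ coincides with $\tau_I$ for every dyadic $I$ containing $t$ whose length is less than the isolation radius of $t$; by countable subadditivity over dyadic intervals it therefore suffices to prove that for each dyadic $I$, almost surely on $\{\tau_I<\infty,\,\tau_I\notin A_f^+\cup A_f^-\}$ the time $\tau_I$ is not isolated from the right in $\zero(B-f)$.

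On $\{\tau_I<\infty\}$ write $\tilde B(h):=B(\tau_I+h)-B(\tau_I)$ and $g(h):=f(\tau_I+h)-f(\tau_I)$. By the strong Markov property, conditionally on $\mathcal{F}_{\tau_I}$ the process $\tilde B$ is a standard Brownian motion from $0$ and $g$ is a deterministic continuous function with $g(0)=0$. Since $\tau_I\notin A_f^+$, one can pick (in an $\mathcal{F}_{\tau_I}$-measurable way) a finite $M$ and a sequence $h_n\downarrow 0$ with $g(h_n)\le M\sqrt{h_n}$. A standard Gaussian lower bound then gives
\[
\mathbb{P}(\tilde B(h_n)\ge g(h_n)\mid\mathcal{F}_{\tau_I})\ge \mathbb{P}(N(0,1)\ge M)>0,
\]
so Remark~\ref{6.30.1}(i) applied conditionally yields positive conditional probability that $\tilde B(h_n)\ge g(h_n)$ occurs infinitely often. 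This event lies in the germ $\sigma$-field $\bigcap_{\epsilon>0}\sigma(\tilde B(s):s\le\epsilon)$, so Blumenthal's $0$-$1$ law (applied to $\tilde B$ conditionally on $\mathcal{F}_{\tau_I}$) forces its conditional probability to equal $1$. The symmetric argument using $\tau_I\notin A_f^-$ yields $\tilde B(h_n')\le g(h_n')$ infinitely often almost surely, and continuity of $\tilde B-g$ then produces zeros of $\tilde B-g$, hence of $B-f$, in every right-neighborhood of $\tau_I$.

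The main obstacle is that the conditions defining $A_f^\pm$ are pointwise statements about $f$ at the \emph{random} time $\tau_I$, so the witnessing sequences $h_n,h_n'$ depend on $\omega$. Handling this cleanly requires working conditionally on $\mathcal{F}_{\tau_I}$ throughout and invoking Blumenthal's $0$-$1$ law to upgrade the positive-probability bound supplied by Remark~\ref{6.30.1}(i) to a full conditional-probability statement, after which the two one-sided conclusions can be combined by continuity.
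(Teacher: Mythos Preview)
Your argument is correct and follows the same strategy as the paper: part (i) via local H\"older continuity of Brownian paths (the paper phrases this through L\'evy's modulus of continuity), and part (ii) via the strong Markov property at a countable family of stopping times, Remark~\ref{6.30.1}(i), and Blumenthal's $0$--$1$ law. The only substantive difference is bookkeeping: where you let $M$ and the witnessing sequence $h_n$ depend on $\tau_I$ and work conditionally on $\mathcal{F}_{\tau_I}$, the paper fixes $M$ in advance, defines explicit measurable functions $s_n^{\pm}(t)=\max\{0\le h\le 1/n:\pm(f(t+h)-f(t))\le M\sqrt{h}\}$, and then takes the union over $M$ via the decomposition $(A_f^+\cup A_f^-)^c=\bigcup_M\overline{A}_f(M)$; this sidesteps the measurable-selection issue you flag in your last paragraph, but your conditional formulation is an equally valid way to resolve it.
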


\begin{proof}
%KB \begin{itemize}
%KB \item[(i)] 
(i)
%KB Assume 
First assume
 that the set $A$ is contained in $[0,N]$ for some large $N$. %By Theorem 1.14 in \cite{MP}, almost surely,
%KB for (a random) $h$ small enough, we have $|B(t+h)-B(t)| \leq 3\sqrt{h \log(1/h)}$, for all $0 \leq t \leq N$. 
 %there exists $h_1>0$ such that for all $h\in(0,h_1)$ and all 
%$0 \leq t \leq N$ we have $|B(t+h)-B(t)| \leq 3\sqrt{h \log(1/h)}$.
%Therefore, almost surely any zero of $B(t)-f(t)$ in $A$ has to be isolated. 
If a zero $s \in A \cap \zero(B-f)$ is not isolated,
then we can find a sequence $(s_n) \subset 
%KB \zero(B-f), 
A \cap \zero(B-f)$, 
 converging to $s$, which, for some $\alpha <1/2$, necessarily satisfies 
\[\liminf_{n \to \infty}\frac{|f(s_n)-f(s)|}{|s_n-s|^\alpha} > 0 \ \text{ and } \  \liminf_{n \to \infty}\frac{|B(s_n)-B(s)|}{|s_n-s|^\alpha} > 0. \]
%KB holds, 
However, this is impossible since, by Levy's modulus of continuity, almost surely, 
there exists an $h_1>0$ such that for all $h\in(0,h_1)$ and all 
$0 \leq t \leq N$ we have $|B(t+h)-B(t)| \leq 3\sqrt{h \log(1/h)}$, see e.~g.~Theorem 1.14 in \cite{MP}.
If $A$ is unbounded, apply the above reasoning to $A_N = A \cap [0,N]$ and let $N$ go to infinity.

%KB \item[(ii)] 
(ii)
 First define $\tau_q = \min\left\{ t\geq q : B(t)=f(t)\right\}$ and notice that any isolated zero of the process $B-f$ must equal $\tau_q$, for some $q \in \mathbb{Q}$. This is because, for any zero $s \in \zero(B-f)$, not of the form $\tau_q$, and a sequence of rational numbers $(q_n)$ converging to $s$ from below, we have $\lim_n \tau_{q_n} = s$.
%This is because, for any such zero $s$ we can find a sequence of rational numbers $(q_n)$, converging to $s$ from 
%KB below, 
%the left
% and satisfying $q_n \leq \tau_{q_n} < s$. 
Therefore, it is enough to prove that for each $q \in \mathbb{Q}^+$, the event that $\tau_q \notin A_f^+ \cup A_f^-$  and that $\tau_q$ is isolated in the set $\zero(B-f)$, has probability zero. 

Fix a positive integer $M$ and define sequences of functions 
\[s_n^-(t) = \max\{0 \leq h \leq 1/n: f(t+h)-f(t)\leq M\sqrt{h}\}\] 
and 
\[s_n^+(t) = \max\{0 \leq h \leq 1/n: f(t+h)-f(t)\geq -M\sqrt{h}\}.\] 
Since $f$ is continuous, it is easy to see that for each $n$, the functions $s_n^+$ and $s_n^-$ are measurable. Also define
\[
\overline{A}_f(M) = \Big\{ t : \liminf_{h \downarrow 0}\frac{f(t+h)-f(t)}{\sqrt{h}} < M, \limsup_{h \downarrow 0}\frac{f(t+h)-f(t)}{\sqrt{h}} > -M\Big\}.
\]
For all $t \in \overline{A}_f(M)$  it holds that $s_n^+(t)>0$ and $s_n^-(t)>0$, for all $n$.
%Decompose the set $(A_f^+\cup A_f^-)^c = \cup_{M=1}^\infty \overline{A_f}(M)$, where
%\[
% , \ \ \  \overline{A_f}(M) = \{t \in \mathbb{R}^+: \liminf_{h \downarrow 0}\frac{f(t+h)-f(t)}{\sqrt{h}} \leq M, \limsup_{h \downarrow 0}\frac{f(t+h)-f(t)}{\sqrt{h}} \geq -M\}.
%\]
%For every $t \in \overline{A_f}(M)$ there are positive sequences $(s_n^-(t))$ and $(s_n^+(t))$ converging to zero, so that $f(t+s_n^-(t))-f(t) < M\sqrt{s_n^-(t)}$ and $f(t+s_n^+(t))-f(t) > -M\sqrt{s_n^+(t)}$. Moreover these sequences can be chosen so that, for each $n$, the functions $t \mapsto s_n^-(t)$ and $t \mapsto s_n^+(t)$ are measurable functions of $t$. One way to ensure this measurability is to choose 
%KB the 
Since $\tau_q$ is a stopping time, the process $B_q(t)=B(\tau_q+t)-B(\tau_q)$,  is, by the strong Markov property, a Brownian motion independent of the sigma algebra $\mathcal{F}_{\tau_q}$. 
%KB Since $M(\tau_q)$, $(s_n^+(\tau_q))$ and $(s_n^-(\tau_q))$ are measurable in $\mathcal{F}_{\tau_q}$,  by Blumenthal 0-1 law, conditioned on $\mathcal{F}_{\tau_q}$, on the event $\{\tau_q \notin A_f^+ \cup A_f^-\}$, the event that $B_q(s_n^-(\tau_q)) > M(\tau_q)\sqrt{s_n^-(\tau_q)}$ happens for infinitely many $n$'s, has probability either zero or one. 
 Let $F_-$ denote the event that $B_q(s_n^-(\tau_q)) \geq M\sqrt{s_n^-(\tau_q)}$ happens for infinitely many $n$'s.
Since the random variables $s_n^-(\tau_q)$ are measurable with respect to $\mathcal{F}_{\tau_q}$, Blumenthal's 0-1 law implies that $\mathbb{P}( F_- \mid \mathcal{F}_{\tau_q}) $ is equal to 0 or 1 on the event $\{\tau_q \in \overline{A}_f(M)\}$.
 On the event $\{\tau_q \in \overline{A}_f(M)\}$, for every $n$, we have
\[
\mathbb{P}\Big(B_q(s_n^-(\tau_q)) \geq M\sqrt{s_n^-(\tau_q)} 
%KB \Big | 
\mid
 \mathcal{F}_{\tau_q}\Big) = \mathbb{P}(B_q(1) \geq M
%KB|
\mid
 \mathcal{F}_{\tau_q}) > 0.
\]
Since the right hand side does not depend on $n$, 
%KB conditioned on $\mathcal{F}_{\tau_q}$, on the event $\{\tau_q \notin A_f^+ \cup A_f^-\}$, the event that $B_q(s_n^-(\tau_q)) > M(\tau_q)\sqrt{s_n^-(\tau_q)}$ happens for infinitely many $n$'s, has probability one. The same is true for the event that $B_q(s_n^+(\tau_q)) < - M(\tau_q)\sqrt{s_n^+(\tau_q)}$ happens for infinitely many $n$'s. 
 by Remark \ref{6.30.1} (i),
$\mathbb{P}( F_- \mid \mathcal{F}_{\tau_q}) =1 $ on the event $\{\tau_q \in \overline{A}_f(M)\}$. Similarly, if $F_+$ denotes the event that $B_q(s_n^+(\tau_q)) \leq - M\sqrt{s_n^+(\tau_q)}$ happens for infinitely many $n$'s then $\mathbb{P}( F_+ \mid \mathcal{F}_{\tau_q}) =1 $ on the event $\{\tau_q \in \overline{A}_f(M)\}$.
 By the definition of the sequences $(s_n^-(t))$ and $(s_n^+(t))$, %KB these two events together imply that 
if $F_-\cup F_+$ holds then
 $\tau_q$ is not an isolated zero from the right. Therefore, the probability that $\tau_q \in \overline{A}_f(M)$ and that $\tau_q$ is an isolated point of $\zero(B-f)$ is equal to zero. Taking the union over all rational $q$'s and observing that $(A_f^- \cup A_f^+)^c = \bigcup_{M=1}^\infty\overline{A}_f(M)$ proves the claim.
\end{proof}

\begin{remark}\label{rem:basic_facts_extended}
By Proposition \ref{prop:basic_facts} (ii) and the time reversal property of Brownian motion it follows that, almost surely all isolated points of $\mathcal{Z}(B-f)$ are contained in the set
\[
\Big\{t\in \mathbb{R}^+: \lim_{h \uparrow 0}\frac{f(t+h)-f(t)}{\sqrt{|h|}} = \infty\Big\} \cup
\Big\{t\in \mathbb{R}^+: \lim_{h \uparrow 0}\frac{f(t+h)-f(t)}{\sqrt{|h|}} = -\infty\Big\}.
\]
Every point $t$ for which the limits
\[\lim_{h \downarrow 0}\frac{f(t+h)-f(t)}{\sqrt{h}} =  \lim_{h \uparrow 0}\frac{f(t+h)-f(t)}{\sqrt{|h|}}\]
are equal to $\infty$ or $- \infty$ is a strict local minimum or maximum. Since a function can have only countable many strict local extrema, almost surely all isolated points of $\mathcal{Z}(B-f)$ are contained in the set
\[
\Big\{t\in \mathbb{R}^+: \lim_{h \to 0}\frac{f(t+h)-f(t)}{\sqrt{|h|}\sign(h)} = \infty\Big\} \cup
\Big\{t\in \mathbb{R}^+: \lim_{h \to 0}\frac{f(t+h)-f(t)}{\sqrt{|h|}\sign(h)} = -\infty\Big\}.
\]
In particular, all isolated points of $\mathcal{Z}(B-f)$ are contained in the set of points of increase or decrease of $f$ (recall that $t$ is a point of increase of $f$ if for some $\epsilon > 0$ we have $f(s) < f(t)$ for $t-\epsilon < s < t$ and $f(t) <  f(s)$ for $t< s < t+\epsilon$ and points of decrease are defined analogously). Therefore, if $f$ is a function with at most countable many points of increase or decrease, then $\mathcal{Z}(B-f)$ has no isolated points almost surely. Examples of such functions include functions constructed by Loud in \cite{Loud} which satisfy a certain local reverse H\"{o}lder property at each point (see also the construction in \cite{MarxPiranian}).  These functions are defined as $g(t) = \sum_{k=1}^\infty g_k(t)$ where $g_k(t) = 2^{-2A\alpha k}g_0(2^{2Ak}t)$, for $0 < \alpha < 1$, a positive integer $A$ such that $2A(1-\alpha) > 1$, and a continuous function $g_0$ which has value $0$ at even integers, value $1$ at odd integers and is linear at all other points.
To prove that these functions have at most countably many points of increase or decrease, we proceed analogously as in the proof of the lower bound in \cite{Loud}. We will take $t \notin \mathbb{Q}$ and show that $t$ is not a point of increase. First observe that $\lfloor 2^{2Am}t\rfloor$ is odd for infinitely many integers $m$. For such an integer $m$ assume that $g_m(t) \geq 2^{-2A\alpha m-1}$ and denote $t_m = t+ 2^{-2A(m+1)}$. Then by construction $g_k(t) = g_k(t_m)$ for all $k > m$, $g_m(t_m) = g_m(t) - 2^{-2A(\alpha m + 1)}$ and $|g_k(t_m)-g_k(t)|  \leq 2^{2Ak(1-\alpha)-2A(m+1)}$, for $k < m$. Therefore 
\begin{align*}
g(t_m) -g(t)  & \leq - 2^{-2A(\alpha m + 1)} + 2^{-2A(m+1)}\sum_{k=1}^{m-1}2^{2Ak(1-\alpha)} \\
 &= \frac{2^{-2A(\alpha m + 1) + 1}  - 2^{-2A\alpha (m+1)} - 2^{-2A(m + \alpha)} }{2^{2A(1-\alpha)} - 1}.
\end{align*}
Using the fact that $2A(1-\alpha)>1$, it is easy to check that the right hand side above is negative, and since $t_m > t$ can be arbitrarily close to $t$ the claim follows. If $g_m(t) < 2^{-2A\alpha m - 1}$ then define $t_m = t - 2^{-2A(m+1)}$, which now satisfies  $g_m(t_m) = g_m(t) + 2^{-2A(\alpha m + 1)}$
and proceed analogously to prove that $g(t_m) > g(t)$.
\end{remark}

\begin{proof}[Proof of Proposition \ref{thm:Holder-1/2}]
This is straightforward from part (ii) of Proposition \ref{prop:basic_facts}.
\end{proof}

%Define the set of \textit{$\beta$-steep points} of a function $g$ as $\mathrm{C}(g)=\{t: \liminf_{s \to t}\frac{|g(s)-g(t)|}{|s-t|^\alpha} > 0\}$.

%\begin{lemma}
%\label{lemma:general_isolated_zeros}
%For $\alpha < 1/2$ any $t\in \zero(B-f) \cap \mathrm{C}(f)$ is an isolated zero almost surely.
%\end{lemma}

%\begin{proof}
%This is a simple consequence of the H\"{o}lder continuity of Brownian motion. Pick $\alpha < \beta < 1/2$, since Brownian
%\end{proof}

\section{Isolated zeros}\label{Isolated zeros - first example}

%\begin{theorem} \label{thm: isolated zeros}
%or all $\alpha<\frac{1}{2}$ there is a $\alpha$-\holder continuous function $F$ such that $B-F$ has isolated zeros with positive probability.
%\end{theorem}
%\begin{theorem}
%  \label{thm: main}
%For any $\alpha < 1/2$ there exists an $\alpha$-H\"{o}lder continuous function which is polar for standard one dimensional Brownian motion.
%\end{theorem}
%\marginpar{thm: main}

For $0 < \gamma < 1/2$, 
%KB we
we will
 define the middle $(1-2\gamma)$-Cantor set and denote it by $C_\gamma$. 
%KB More precisely, take 
Take
 a closed interval $I$ of length $|I|$. Define $\mathfrak{C}_{\gamma,1}$ as the set consisting of two disjoint closed subintervals of $I$ of length $\gamma |I|$, the left one (for which the left endpoint coincides with the left endpoint of $I$) and the right one (for which the right endpoint coincides with the right endpoint of $I$). %Define $C_{\gamma,1}$ as the union of these two intervals.
% Define $C_{\gamma,1}$ as the union of its left and the right closed subintervals of length $\gamma|I|$. 
Continue recursively, if $J \in \mathfrak{C}_{\gamma,n}$, then include in the set $\mathfrak{C}_{\gamma,n+1}$ its left and right closed subintervals of length $\gamma^{n+1}|I|$. Define the set $C_{\gamma,n}$ as the union of all the intervals from $\mathfrak{C}_{\gamma,n}$. 
%KB Obviously for 
For
 any $n$, the 
%KB set
family
 $\mathfrak{C}_{\gamma,n}$ is the set 
%KB of
of all
 connected components of the set $C_{\gamma,n}$.
The Cantor set is a compact set defined as
$C_\gamma=\bigcap_{n=1}^\infty C_{\gamma,n}$.
It is easy to show that $\dim(C_\gamma) = \log 2/\log (1/\gamma)$. 
%Denote by $\mathfrak{C}_{\gamma,n}$ the set of connected components of $C_{\gamma,n}$ (the intervals from construction). 
%Let $C_{\gamma,n}$ be the $n$-th approximation of $C_\gamma$ obtained by the construction.

% BEGIN OF THE COMMENTED REGION %%%%%%%%%%%%%%%%%%%%%%%%%%%%%%%%%%%%%%%%%%%%%%%%%%%%%%%%%%%%%%%%%%%
%%%%%%%%%%%%%%%%%%%%%%%%%%%%%%%%%%%%%%%%%%%%%%%%%%%%%%%%%%%%%%%%%%%%%%%%%%%%%%%%%%%%%%%%%%%%%%%%%%%
%%%%%%%%%%%%%%%%%%%%%%%%%%%%%%%%%%%%%%%%%%%%%%%%%%%%%%%%%%%%%%%%%%%%%%%%%%%%%%%%%%%%%%%%%%%%%%%%%%%

% \begin{comment} %%%%%%%%%%%%%%%%%%%%%%%%%%%%%%%%%%%%%%%%%%%%%%%%%%%
% \begin{figure}[h]
% \centering
% \begin{overpic}[width=4cm]{cantortype6.eps}
% \put(96,4){\tiny{$1$}}
% \put(2,96){\tiny{$1$}}
% \put(2,4){\tiny{$0$}}
% \put(2,54){\tiny{$\frac{1}{2}$}}
% \put(8,9){$\underbrace{\ \  \qquad}$}
% \put(18,0){\tiny{$\gamma$}}
% %\put(70,55){\tiny{$\underbrace{}$}}
% \put(70,0){\tiny{$\gamma^3$}}
% \end{overpic}
% \caption{3 generations: the graph of the function $f_{\gamma,3}$}.
% \label{pic2}
% \end{figure}
% \end{comment}

%%%%%%%%%%%%%%%%%%%%%%%%%%%%%%%%%%%%%%%%%%%%%%%%%%%%%%%%%%%%%%%%%%%%%%%%%%%%%%%%%%%%%%%%%%%%%%%%%%%
%%%%%%%%%%%%%%%%%%%%%%%%%%%%%%%%%%%%%%%%%%%%%%%%%%%%%%%%%%%%%%%%%%%%%%%%%%%%%%%%%%%%%%%%%%%%%%%%%%%
% END OF THE COMMENTED REGION %%%%%%%%%%%%%%%%%%%%%%%%%%%%%%%%%%%%%%%%%%%%%%%%%%%%%%%%%%%%%%%%%%%

Now we 
%KB describe
recall
 the construction of the standard Cantor function. Define the function $f_{\gamma,1}$ so that it has values $0$ and $1$ at the left and the right endpoint of the interval $I$, respectively, value $1/2$ on $I\backslash C_{\gamma,1}$ and interpolate linearly on the intervals in $\mathfrak{C}_{\gamma,1}$. Recursively, construct the function $f_{\gamma,n+1}$ so that for every interval $J =[s,t]\in \mathfrak{C}_{\gamma,n}$, the function $f_{\gamma,n+1}$ agrees with $f_{\gamma,n}$ at $s$ and $t$, it has value $(f_{\gamma,n}(s)+f_{\gamma,n}(t))/2$ on $J \backslash C_{\gamma,n+1}$ and interpolate linearly on the intervals in $\mathfrak{C}_{\gamma,n+1}$. See Figure \ref{fig:cantor}. 
It is easy to see that the sequence of functions $(f_{\gamma,n})$ converges uniformly on $I$.
We define the Cantor function $f_\gamma$ as the limit $f_\gamma=\lim_n f_{\gamma,n}$.
Note that for any $n$ and all $m \leq n$ the functions $f_\gamma$ and $f_{\gamma,n}$ agree at the endpoints of intervals $J \in \mathfrak{C}_{\gamma,m}$.

Another way to characterize the Cantor set $C_\gamma$ and the Cantor function $f_\gamma$ is by representing it as fixed points of certain transformations. Define linear bijections $g_{\gamma} \colon [0,\gamma] \to [0,1]$ and $h_{\gamma} \colon [1-\gamma,1] \to [0,1]$ by $g_{\gamma}(t) = t/\gamma$ and $h_{\gamma}(t)=(t-1+\gamma)/\gamma$. The Cantor set $C_\gamma$ defined on $[0,1]$ is the unique nonempty compact set that satisfies 
$C_\gamma = g_{\gamma}^{-1}(C_\gamma) \cup h_{\gamma}^{-1}(C_\gamma)$,
and the corresponding Cantor function is the unique continuous function that satisfies
\begin{equation}
\label{eq: cantor function definition}
f_{\gamma}(t) = \left\{\begin{array}{l l} f_{\gamma}(g_{\gamma}(t))/2, & 0 \leq t \leq \gamma,\\ 1/2, & \gamma \leq t \leq 1-\gamma, \\ 1/2 + f_{\gamma}(h_{\gamma}(t))/2, & 1-\gamma \leq t \leq 1.\ \end{array}\right.
\end{equation}

%Define the corresponding Cantor function $f_\gamma$ and
%The graph of the restriction of $f_{\gamma,n}$ to an interval $I \in \mathfrak{C}_{\gamma,n}$ will be called a \textit{steep segment} of $f_{\gamma,n}$,
%and $I$ will be called it's \textit{projection} (we use the notation $I = P(J)$).
%The set of all steep segments of generation $n$ will be denoted by $\mathcal{S}_{\gamma,n}$ 
%and the union of all such segments will be denoted by $S_{\gamma,n}$.
%For two steep segments $J_1$ and $J_2$ we write $J_1 < J_2$ if $J_2$ is located right of $J_1$. 
%The sequence of functions $(f_{\gamma,n})$ converges uniformly to a continuous function $f_\gamma$ called Cantor function.
%We define the set of \textit{steep points} of $f_\gamma$ as $S_\gamma=\{(t,f_\gamma(t)): t \in C_\gamma\}$. 
%Denote by $f_{\gamma,n}$ the $n$-th approximation of $f_\gamma$.
%Between the constant parts $f_{\gamma,n}$ is linearly interpolated (see figure \ref{pic2}).
%Such a connected part of the graph of $f_{\gamma,n}$ of width $2^{-n}$ on which this function is increasing is called \textit{steep interval}.
%The set of all steep intervals is denoted by $S_{\gamma,n}$.
%For two steep intervals $I, J \in S_{\gamma,n}$ we say that $I \leq J$ is $J$ is located right of $I$.
%We assume that $f_\gamma$ and all $f_{\gamma,n}$ are translated away from zero so that $f_\gamma, f_{\gamma,n} \colon [1,2] \to \mathbb{R}$. For any function $g$ by $\Gamma(g)$ denote its graph, in particular, $\Gamma(B)$ will be graph of Brownian motion. Graphs will be considered as subsets of $\mathbb{R}^+ \times \mathbb{R}$.

It is not hard to see that $f_\gamma$ is $\log 2 / \log (1/\gamma)$-H\"{o}lder continuous. The value $\gamma=1/4$ is the threshold at which the functions $f_\gamma$ become $1/2$-H\"{o}lder continuous.
%KB Exactly for 
For
 $\gamma < 1/4$ the function $f=f_\gamma$ will give an example for Theorem \ref{thm: isolated_zeros}. This threshold is sharp, since by Proposition \ref{thm:Holder-1/2}, for $\gamma \geq 1/4$ there are no isolated points in the zero set $\zero(B-f_{\gamma})$ almost surely.
For simplicity, we will assume the initial interval $I$ to be $[1,2]$, but we note that the analysis works for all compact intervals. To satisfy the assumptions of Theorem \ref{thm: isolated_zeros}, the function $f_{\gamma}$ should of course be extended to $\mathbb{R}^+ \backslash [1,2]$, by, say, value $0$ on $[0,1)$ and value $1$ on $(2,\infty)$.

\begin{proof}[Proof of Theorem \ref{prop: hitting characterisation}]
%Let $B$ be the standard one dimensional Brownian motion and denote by $\Gamma(B)$ its graph..
%We prove the proposition by a second moment argument.
%Let $Z_{\gamma,n}$ denote the number of steep intervals of $f_{\gamma,n}$ that are hit by the graph of $B$.
For an interval $I=[r,s] \in \mathfrak{C}_{\gamma.n}$, define $Z_n(I)$ as the event $B(s) \in [f_\gamma(r),f_\gamma(s)]$, and the random variable $Z_{\gamma,n} = \sum_{I \in \mathfrak{C}_{\gamma,n}}\mathbf{1}(Z_n(I))$, where $\mathbf{1}(Z_n(I))$ is the indicator function of the event $Z_n(I)$.
Note that there is a constant $c_1 > 0$, such that for any $0<\gamma <1/2$ we have 
\begin{equation}
\label{eq: hitting probabilities}
c_12^{-n} \leq \mathbb{P}(Z_n(I)) \leq 2^{-n} \ \text{ and } \ c_1 \leq \mathbb{E}(Z_{\gamma,n}) \leq 1.
\end{equation}
%as the number of steep segments of $f_{\gamma,n}$ that are hit by the graph $\Gamma(B)$.
If $Z_{\gamma,n}>0$ happens for infinitely many $n$'s, then we can find a sequence of intervals $I_k=[r_k,s_k] \in \mathfrak{C}_{\gamma,n_k}$, such that $f_{\gamma}(r_k) \leq B(s_k) \leq f_{\gamma}(s_k)$, thus $|B(s_k) - f_\gamma(s_k)| \leq 2^{-n_k}$. Since $s_k \in C_{\gamma,n_k}$, the sequence $(s_k)$ will have a subsequence converging to some $s \in C_\gamma$, which obviously satisfies $B(s)=f_\gamma(s)$. %Since the probability of the event that $Z_{\gamma,n}>0$ happens infinitely 
%KB many
% often is bounded from below by $\limsup_{n \to \infty}\mathbb{P}(Z_{\gamma,n}>0)$, we have
Therefore
\begin{equation}
\label{eq: approximation of probabilities_1}
\mathbb{P}(\zero(B-f_\gamma) \cap C_{\gamma} \neq \emptyset) \geq \mathbb{P}(\limsup_{n\rightarrow \infty} \{Z_{\gamma,n}>0\}).
\end{equation} 
%KB A similar 
%KB First observe 

To estimate the probabilities $\mathbb{P}(Z_{\gamma,n}>0)$ from below we will use the inequality from Remark \ref{6.30.1} (ii), for which we need to bound the second moment $\mathbb{E}(Z_{\gamma,n}^2)$ from above. First we express it as 
\begin{equation}
\label{eq: second moment}
\mathbb{E}(Z_{\gamma,n}^2) = 2\sum_{I,J \in \mathfrak{C}_{\gamma,n} \atop I < J} \mathbb{P}(Z_n(I))\mathbb{P}(Z_n(J) \mid Z_n(I)) + \mathbb{E}(Z_{\gamma,n}).
\end{equation}
Now fix $n$ and intervals $I=[s_1,t_1]$ and $J=[s_2,t_2]$ from $\mathfrak{C}_{\gamma,n}$, so that $I<J$, and denote $a_i=f_\gamma(s_i)$ and $b_i=f_\gamma(t_i)$, for $i=1,2$. Let $\widetilde{B}$ be the process 
\[\widetilde{B}(t)=(t_2-t_1)^{-1/2}\Big(B(t_1+(t_2-t_1) t)-B(t_1)\Big),\]
which is, by the Markov property and Brownian scaling, again a Brownian motion, independent of $\mathcal{F}_{t_1}$, and thus independent of the event $Z_n(I) \in \mathcal{F}_{t_1}$. 
%KB Clearly the 
The
 event $Z_n(J)$ happens when $\widetilde{B}(1) \in \overline{J}$ for the interval $\overline{J}=[(t_2-t_1)^{-1/2}(a_2-B(t_1)) , (t_2-t_1)^{-1/2}(b_2-B(t_1))]$ of length $(t_2-t_1)^{-1/2}2^{-n}$.
 %Now assume that $Z_n(I)$ happens. By $\ell$ denote the largest integer such that both $I$ and $J$ are contained in a single interval from $\mathfrak{C}_{\gamma,\ell}$. Clearly $(t_2-t_1)^{-1/2}(a_2-B(t_1)) \geq 0$ and $(t_2-t_1)^{-1/2}(b_2-B(t_1)) \leq (1-2\gamma)^{-1/2}(2\sqrt{\gamma})^{-\ell}$. 

\textbf{Case $\gamma < 1/4$:} 
Fix intervals $I^0$ and $J^0$
%KB from 
in
 $\mathfrak{C}_{\gamma,\ell+1}$, which are contained in a single interval %KB from
in
 $\mathfrak{C}_{\gamma,\ell}$. Label the intervals from $\mathfrak{C}_{\gamma,n}$ contained in $I^0$ by $I_1, \dots, I_{2^{n-\ell-1}}$, and those contained in $J^0$ by $J_1, \dots, J_{2^{n-\ell-1}}$, so that $I_{i+1} < I_i$ and $J_j < J_{j+1}$. Set $I=I_i$ and $J=J_j$ for some $1 \leq i,j \leq 2^{n-\ell-1}$, and define $a_i$, $b_i$, $\widetilde{B}$ and $\overline{J}$ as before. Conditional on $Z_n(I)$, the left endpoint of the interval $\overline{J}$ is at least $(a_2-b_1)(t_2-t_1)^{-1/2}$, and since $a_2-b_1 = (i+j-2)2^{-n}$ we have $\overline{J} \subset [(i+j-2)2^{-n}\gamma^{-\ell/2}, \infty)$. Because $\overline{J}$ has length at most $(1-2\gamma)^{-1/2}2^{-n}\gamma^{-\ell/2}$ we obtain
\begin{align*}
\mathbb{P}(Z_n(J_j) \mid Z_n(I_i)) &  = \mathbb{P}(\widetilde{B}(1) \in \overline{J} \mid Z_n(I))\\
& \leq \frac{2^{-n}\gamma^{-\ell/2}}{\sqrt{2\pi(1-2\gamma)}}\exp\Big(-\frac{((j+i-2)2^{-n}\gamma^{-\ell/2})^2}{2}\Big),
\end{align*}
which, by summing over $1 \leq i,j \leq 2^{n-\ell-1}$ gives
\begin{multline}
\label{eq: conditional probabilities estimates}
\sum_{1 \leq i,j \leq 2^{n-\ell-1}}\mathbb{P}(Z_n(J_j) \mid Z_n(I_i))
\\
\leq 1 + \frac{1}{\sqrt{2\pi(1-2\gamma)}}
\sum_{k=1}^{\infty}(k+1)2^{-n}\gamma^{-\ell/2}\exp\Big(-\frac{(k2^{-n}\gamma^{-\ell/2})^2}{2}\Big).
\end{multline}
Here we used the trivial bound for $i=j=1$. The sum on the right hand side can be written as 
\begin{equation*}
%\label{eq: general riemann sum}
S(a)=a\sum_{k = 1}^\infty(k+1)\exp(-(ka)^2/2),
\end{equation*}
for $a=2^{-n}\gamma^{-\ell/2}$. Since $\exp(-t^2/2) \leq \exp(-t+1/2)$, we see that 
\[
S(a) \leq e^{1/2}a\sum_{k \geq 1}(k+1)e^{-ka} = e^{1/2}a^{-1}\Big(\frac{a}{1-e^{-a}}\Big)^2(2e^{-a}-e^{-2a}).
\]
Since $a\mapsto (\frac{a}{1-e^{-a}})^2(2e^{-a}-e^{-2a})$ is a bounded function on $\mathbb{R}^+$, (\ref{eq: conditional probabilities estimates}) implies that for any fixed $I^0$ and $J^0$ as above
\begin{equation}
\label{eq:second_moment_estimates_below_1/4_3}
\sum_{I,J \in \mathfrak{C}_{\gamma,n} \atop I \subset I^0, J \subset J^0} \mathbb{P}(Z_n(J) \mid Z_n(I)) \leq 1+c_2 2^n\gamma^{\ell/2},
\end{equation}
for some $c_2>0$.
Therefore, summing the inequality in (\ref{eq:second_moment_estimates_below_1/4_3}) over all $I^0$ and $J^0$ and $\ell=0, \dots , n-1$, and using it together with (\ref{eq: second moment}) and (\ref{eq: hitting probabilities}), we have
%\marginpar{eq: second moment estimates below 1/4}
\[
\mathbb{E}(Z_{\gamma,n}^2) 
%= 2\sum_{I,J \in \mathfrak{C}_{\gamma,n} \atop I < J} \mathbb{P}(Z_n(J)\cap Z_n(I)) + \mathbb{E}(Z_{\gamma,n}) \\
\leq 2^{-n+1} \sum_{\ell=0}^{n-1}\Big(2^\ell + 2^\ell c_2 2^n\gamma^{\ell/2}\Big) + 1 \leq 2\sum_{\ell=0}^n\Big(2^{-(n-\ell)} + c_2(2\sqrt{\gamma})^\ell\Big),
%\leq K_2\sqrt{\frac{2}{\pi}}\left[\sum_{l=1}^nK_3(2\sqrt{\gamma})^l + 4^{-n}\sum_{l \leq 2n\frac{\log 2}{\log (1/\gamma)}} (\frac{2}{\sqrt{\lambda}})^l + 2^{-n}\sum_{2n\frac{\log 2}{\log (1/\gamma)} \leq l \leq n}2^l\right].
\]
which is
%KB obviously
 bounded since $2\sqrt{\gamma}<1$.

Thus we have shown that, for a fixed $\gamma < 1/4$, the second moments $\mathbb{E}(Z_{\gamma,n}^2)$ are bounded from above. Now the lower bound in the second inequality in (\ref{eq: hitting probabilities}) and the inequality from Remark \ref{6.30.1} (ii) imply that $\mathbb{P}(Z_{\gamma,n}>0) \geq \mathbb{E}(Z_{\gamma,n})^2/\mathbb{E}(Z_{\gamma,n}^2)$ is bounded from below and the claim follows from (\ref{eq: approximation of probabilities_1}) and Remark \ref{6.30.1} (i).

\begin{figure}
%%%%%%%%%%%%%%%%%%%%%%%%%%%%%%%%%%%%%%%%%
%\end{center}
\includegraphics{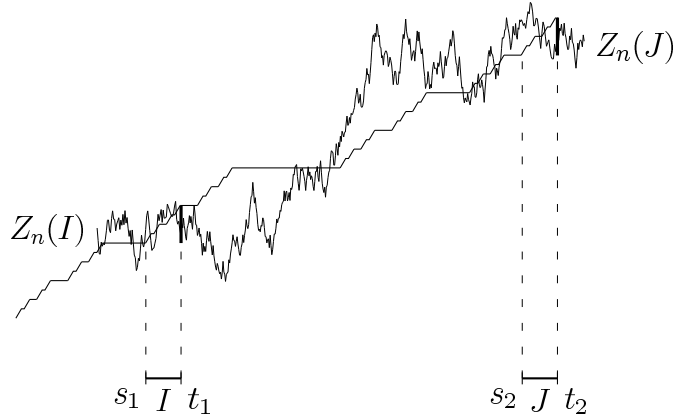}
\caption{Events $Z_n(I)$ and $Z_n(J)$ (graph of Brownian motion intersects two bold vertical intervals).}.
\label{fig:intersection_with_cantor}
\end{figure}

\textbf{Case $\gamma > 1/4$:} 
Again pick $I,J \in \mathfrak{C}_{\gamma,n}$ such that $[s_1,t_1]=I < J =[s_2,t_2]$ and define $a_i$, $b_i$, $\widetilde{B}$ and $\overline{J}$ as before. 
By $\ell$ denote the largest integer such that both $I$ and $J$ are contained in a single interval from $\mathfrak{C}_{\gamma,\ell}$.
Assume that $Z_n(I)$ happens. 
Clearly the endpoints of the interval $\overline{J}$ satisfy 
\[\frac{a_2-B(t_1)}{(t_2-t_1)^{1/2}} \geq 0 \ \text{ and } \  \frac{b_2-B(t_1)}{(t_2-t_1)^{1/2}} \leq \frac{1}{(1-2\gamma)^{1/2}(2\sqrt{\gamma})^{\ell}}.\]
The sequence $((2\sqrt{\gamma})^{-\ell})$ is bounded, and therefore the interval $\overline{J}$ is contained in a compact interval, which does not depend on the choice of $n$, $\ell$, $I$ or $J$. Using this and the fact that the length of $\overline{J}$ is bounded from above by $(1-2\gamma)^{-1/2}2^{-n}\gamma^{-\ell/2}$
%KB new
and from below by $c' 2^{-n}\gamma^{-\ell/2}$,
%KB end new
 we get that 
%KB all constants of the form C_k are replaced with c_k to avoid
%KB with the notation C_\gamma for the Cantor set (no further marking)
for some positive constants $c_3$ and $c_4$ we have 
\begin{equation}
\label{eq: easy conditional probabilities}
c_32^{-n}\gamma^{-\ell/2} \leq \mathbb{P}(Z_n(J) \mid Z_n(I)) = \mathbb{P}(\widetilde{B}(1) \in \overline{J} \mid Z_n(I)) \leq c_4 2^{-n}\gamma^{-\ell/2}.
\end{equation}
Note that, since the sequence $((2\sqrt{\gamma})^{-\ell})$ is bounded for $\gamma=1/4$, estimates (\ref{eq: easy conditional probabilities}) also hold for $\gamma=1/4$.
%For $\gamma > 1/4$,
%KB plugging
Substituting
 (\ref{eq: easy conditional probabilities}) and the upper bounds from (\ref{eq: hitting probabilities}) into (\ref{eq: second moment}), and summing over all intervals $I$ and $J$, we obtain
\[
\mathbb{E}(Z_{\gamma,n}^2) \leq 1 + 2^{-n+1}\sum_{\ell=0}^{n-1}2^\ell2^{2(n-\ell-1)}c_4 2^{-n}\gamma^{-\ell/2} = 1 + \frac{c_4}{2}\sum_{\ell=0}^{n-1}(2\sqrt{\gamma})^{-\ell}.
\]
Since $2\sqrt{\gamma} > 1$, we have bounded $\mathbb{E}(Z_{\gamma,n}^2)$ from above by a constant not depending on $n$, and the claim follows as in the case $\gamma < 1/4$.

\textbf{Case $\gamma=1/4$: }
%We will find an
% upper bound for $ \mathbb{P}(\zero(B-f_\gamma) \cap C_{\gamma} \neq \emptyset)$ 
%KB can be given
% in the case $\gamma \leq 1/4$. Let $\gamma \leq 1/4$, 
Assume that $\zero(B-f_\gamma)\cap C_\gamma \neq \emptyset$ and define $\tau$ as the first zero of $B-f_\gamma$ in the Cantor set $C_\gamma$ ($\tau$ exists since $\zero(B-f_\gamma)\cap C_\gamma$ is a closed set). For an interval $I = [s,t] \in \mathfrak{C}_{\gamma,n}$ assume that $\tau \in I$. Since $\tau$ is a stopping time, and by Brownian scaling, the conditional probability $\mathbb{P}\Big(Z_n(I)
%KB \Big |
\mid
 \mathcal{F}_\tau, \tau \in I\Big)$
%KB can be calculated as
is equal to
 the probability that Brownian motion at time $1$ is between $y_1=(f_\gamma(s)-f_\gamma(\tau))(t-\tau)^{-1/2}$ and $y_2=(f_\gamma(t)-f_\gamma(\tau))(t-\tau)^{-1/2}$. Since $f_\gamma(s) \leq f_\gamma(\tau) \leq f_\gamma(t)$ we see that $y_1 \leq 0$ and $y_2 \geq 0$. Moreover, the assumption $\gamma = 1/4$ implies that $t-\tau \leq 4^{-n} = (f_\gamma(t)-f_\gamma(s))^2$ which leads to $y_2-y_1 \geq 1$. 
Thus we can bound the probability 
\begin{equation}
\label{eq: approximation of probabilities_1-5}
\mathbb{P}\Big(Z_n(I)
%KB \Big |
\mid
 \mathcal{F}_\tau, \tau \in I \Big) %= \mathbb{P}\Big(B(s) \in [f_\gamma(r),f_\gamma(s)]
%KB \Big | 
%\mid
% \mathcal{F}_\tau , \tau \in I\Big)  \\
\geq \mathbb{P}(0 \leq  B(1) \leq 1) = K^{-1},
\end{equation}
for some $K>0$, see also Figure \ref{fig: approximation from above}.
\begin{figure}
%%%%%%%%%%%%%%%%%%%%%%%%%%%%%%%%%%%%%%%%%
%\end{center}
\includegraphics{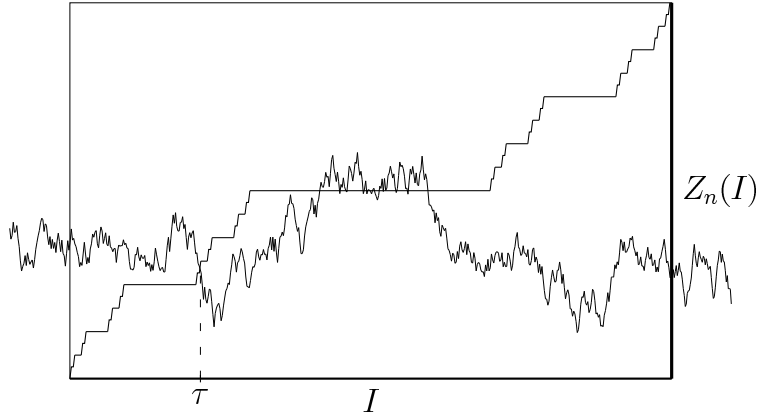}

\caption{If $\gamma \leq 1/4$, 
%KB "conditioned" replaced with "conditional" at many places without further marking
conditional
 on the event that there is a zero of $B-f_\gamma$ in an interval $I\in \mathfrak{C}_{\gamma,n}$ ($\tau$ is the first such zero) the probability of the event $Z_n(I)$ (Brownian motion intersecting the right hand side of the rectangle) is bounded from below.}
\label{fig: approximation from above}
\end{figure}

%
%
%
%
%
%
%
%
%
%
%
%
%
%can be bounded from below by the probability that Brownian motion at time $1$ is in the interval $[0,1]$ for any $n$ and $I=[s,t] \in \mathfrak{C}_{\gamma,n}$. 
Therefore, $\mathbb{P}(Z_{\gamma,n}>0 \mid \tau \in I) \geq K^{-1}$ and, since the events $\{\tau \in I\}$ are disjoint for different $I \in \mathfrak{C}_{\gamma,n}$, we have $\mathbb{P}(Z_{\gamma,n}>0 \mid \zero(B-f)\cap C_\gamma \neq \emptyset) \geq K^{-1}$ for every $n$. Thus we obtain
\begin{equation}
\label{eq: approximation of probabilities_2}
\mathbb{P}(\zero(B-f_\gamma) \cap C_{\gamma} \neq \emptyset) \leq  K 
%KB \min_n 
\inf_n
 \mathbb{P}(Z_{\gamma,n}>0).
\end{equation}
The arguments leading to (\ref{eq: approximation of probabilities_1-5}) are true for all $\gamma \leq 1/4$ and therefore (\ref{eq: approximation of probabilities_2}) holds for $\gamma \leq 1/4$.
%Using (\ref{eq: approximation of probabilities_1}) and (\ref{eq: approximation of probabilities_2}) we can estimate $\mathbb{P}(\zero(B-f_\gamma) \cap C_{\gamma} \neq \emptyset)$ by bounding the  probabilities $\mathbb{P}(Z_{\gamma,n}>0)$.
%KB First observe 

%Now we treat the last case of $\gamma=1/4$.
For $I=[s,t] \in \mathfrak{C}_{\gamma,n}$ and $0 \leq \ell < n$, let $I^\ell$ denote the interval from $\mathfrak{C}_{\gamma,\ell}$ that contains $I$, and let $I_1^\ell, I_2^\ell \in \mathfrak{C}_{\gamma,\ell+1}$ be the left and right subintervals of $I^\ell$, respectively. 
%By $s$ denote the  left endpoint of the interval $I$ and 
Let $s=1.a_1 a_2 \dots a_n$ denote the $4$-ary expansion of the left endpoint of $I$ (note that the $4$-ary expansion of $s$ contains only the digits $0$ and $3$ and has length at most $n$, here we add zeros at the end, if necessary, to make it of length $n$). It is easy to see that $I \subset I_1^\ell$ if $a_{\ell+1}=0$ and $I \subset I_2^\ell$ if $a_{\ell+1}=3$. 
Call
%KB the 
an
 interval $I$
%KB balanced
{\em balanced}
 if the sequence $a_1, \dots, a_n$ contains at least $n/3$ zeros and otherwise unbalanced. For
%KB some
a
 balanced interval $I=[s,t] \in \mathfrak{C}_{\gamma,n}$ let $A_I$ denote the event that $I$ is the
%KB first
leftmost
 balanced interval for which $Z_n(I)$ happens,
%KB new
%among all intervals in $\mathfrak{C}_{\gamma,n}$.
and, as before, let $1.a_1 \dots  a_n$ denote the $4$-ary expansion of $s$.
Assume that for some $0 \leq \ell < n$ we have $a_{\ell+1}=0$ and pick an interval $J \in \mathfrak{C}_{\gamma,n}$ such that $J \subset I_2^\ell$. 
%By $\overline{I}$ and $\overline{J}$ denote the segments in $\mathcal{S}_{\gamma,n}$ whose projections $P(\overline{I})$ and $P(\overline{J})$ are $I$ and $J$ respectively.
Since $A_I \in \mathcal{F}_{t_1}$ we can use the same arguments that lead to the lower bound in (\ref{eq: easy conditional probabilities}) to conclude that the probability $\mathbb{P}(Z_n(J) \mid A_I) \geq c_5 2^{-(n-\ell)}$, for some constant $c_5>0$. Summing over all $J \subset I_2^\ell$ and over all $\ell$ such that $a_{\ell+1}=0$ gives 
\[\mathbb{E} (Z_{\gamma,n} \mid A_I) \geq c_5|\{1 \leq \ell \leq n : a_\ell =0\}| \geq c_5n/3.\]
By (\ref{eq: hitting probabilities}) and since events $A_I$ are disjoint, we have
%\marginpar{eq: 1/4 case balanced part}
\begin{multline}
\label{eq: 1/4 case balanced part}
\mathbb{P}(Z_n(I) \text{ for some balanced interval } I)\\ \leq  \frac{\mathbb{E}(Z_{\gamma,n})}{\mathbb{E}(Z_{\gamma,n} \mid Z_n(I) \text{ for some balanced interval } I)} \leq \frac{3}{c_5n}.
\end{multline}
To estimate the probability that $Z_n(I)$ happens for some unbalanced interval $I$
%KB simply
 notice that the number of such intervals is bounded from above by $e^{-c_6 n}2^n$ for some $c_6>0$. By 
%parts (iv) and (vi) of Lemma \ref{lemma: hitting estimates} 
(\ref{eq: hitting probabilities})  this gives
%\marginpar{eq: 1/4 case unbalanced part}
\begin{equation}
\label{eq: 1/4 case unbalanced part}
\mathbb{P}(Z_n(I) \text{ for some unbalanced interval } I) \leq e^{-c_6n}.
\end{equation}
Now (\ref{eq: 1/4 case balanced part}) and (\ref{eq: 1/4 case unbalanced part}) yield $\lim_{n \to \infty}\mathbb{P}(Z_{\gamma,n}>0)=0$ and the claim follows from (\ref{eq: approximation of probabilities_2}).
\end{proof}

Since the Cantor function $f_\gamma$ is $\log 2 / \log (1/\gamma)$-H\"{o}lder continuous, the following proposition proves Theorem \ref{thm: isolated_zeros}. 

\begin{proposition}
\label{prop:isolated_zeros_first_example}
For $\gamma<1/4$ consider the Cantor function $f_\gamma$. Then the set $\zero(B-f_\gamma)$ has isolated points with positive probability.
\end{proposition}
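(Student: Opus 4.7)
The plan is to combine Theorem \ref{prop: hitting characterisation} with Proposition \ref{prop:basic_facts}(i). For $\gamma<1/4$, Theorem \ref{prop: hitting characterisation} gives $\mathbb{P}(\zero(B-f_\gamma) \cap C_\gamma \neq \emptyset) > 0$, so it suffices to construct a Borel subset $A \subset C_\gamma$ such that (i) every $t \in A$ satisfies
\[
\liminf_{s \to t}\frac{|f_\gamma(s)-f_\gamma(t)|}{|s-t|^\beta} > 0,
\]
with $\beta = \log 2 / \log(1/\gamma) < 1/2$, so that zeros of $B-f_\gamma$ lying in $A$ are isolated by Proposition \ref{prop:basic_facts}(i), and (ii) $\mathbb{P}(\zero(B-f_\gamma) \cap A \neq \emptyset) > 0$.

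For (i), the natural candidate for $A$ is a set of \emph{balanced} Cantor points. Encoding each $t \in C_\gamma$ by its $\{L,R\}$-address (recording the left/right choices at every level of the nested construction), balancedness means that the relative position of $t$ within every enclosing interval $I_n \in \mathfrak{C}_{\gamma,n}$ stays bounded away from both endpoints of $I_n$, equivalently that the address avoids long runs of identical letters. Using the self-similarity identity (\ref{eq: cantor function definition}), I would verify the H\"older lower bound in two cases. When $s \in C_\gamma$, the scaling applied at the finest common level $n$ of $s$ and $t$ yields $|f_\gamma(s) - f_\gamma(t)| \gtrsim 2^{-n}$ while $|s-t| \leq \gamma^n$, so $\gamma^\beta = 1/2$ makes the ratio bounded below. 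When $s$ lies in a removed interval $(u,v)$ near $t$, the equality $f_\gamma(s) = f_\gamma(u)$ (with $u, v \in C_\gamma$) reduces the estimate to the previous case for the pair $u, t$, while the balancedness of $t$ guarantees that $|s-t|$ is comparable to $|u-t|$ up to a constant depending only on the balancedness parameter.

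For (ii), I would adapt the second moment argument used in the proof of Theorem \ref{prop: hitting characterisation} (case $\gamma < 1/4$) by restricting the sum defining $Z_{\gamma,n}$ to those intervals $I \in \mathfrak{C}_{\gamma, n}$ whose $\{L,R\}$-address qualifies for the set $A$. The restricted second moment inherits the bound $\mathbb{E}(Z_{\gamma,n}^2) \leq C$ already established. Provided the first moment of the restricted sum stays bounded below uniformly in $n$, Remark \ref{6.30.1}(ii) gives a uniform positive lower bound on the probability that the restricted sum is positive, Remark \ref{6.30.1}(i) lifts this to a positive probability of this event occurring for infinitely many $n$, and a compactness argument on nested balanced intervals then produces a zero $t^* \in A$.

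The main obstacle is the calibration of $A$. A uniform bound of fixed $k$ on the length of runs in the address produces only a subfamily of size $\sim r_k^n$ with $r_k < 2$ within $\mathfrak{C}_{\gamma,n}$, which makes the first moment of the restricted sum decay geometrically and renders the inequality of Remark \ref{6.30.1}(ii) vacuous; conversely, a very generous bound preserves a constant fraction of intervals but may fail the uniform H\"older lower bound needed in (i). Threading between these requirements is the crux of the proof, and is plausibly achieved by combining multiple balancedness scales, using a level-dependent run-length threshold, or exploiting the self-similar renewal structure provided by the strong Markov property of $B$ at the first hitting time of the graph of $f_\gamma$ restricted to $C_\gamma$.
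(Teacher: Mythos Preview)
Your high-level plan --- exhibit a subset $A\subset C_\gamma$ on which the reverse-H\"older condition of Proposition~\ref{prop:basic_facts}(i) holds, and show $\zero(B-f_\gamma)$ meets $A$ with positive probability --- is exactly the paper's plan. The difference lies in how $A$ is built and how the hitting probability is secured, and here your proposal has a real gap that you yourself flag: the calibration of the balancedness condition is left unresolved, and the suggestions in your last paragraph are speculative rather than an argument.

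The paper sidesteps the calibration problem by working in the \emph{image} of $f_\gamma$ rather than in the address space. The key auxiliary estimate is
\[
\mathbb{P}\bigl(\zero(B-f_\gamma)\cap C_\gamma\cap f_\gamma^{-1}(J)\neq\emptyset\bigr)\le c_1|J|
\]
for every interval $J\subset[0,1]$; this comes cheaply from the machinery already built for Theorem~\ref{prop: hitting characterisation} (the same comparison with the events $Z_n(I)$). One then fixes $\gamma<\gamma_1<1/4$, removes from $[0,1]$ the union $M_{n_0}=\bigcup_{n\ge n_0}\bigcup_k[k2^{-n}-\tfrac12\gamma_1^{n/2},\,k2^{-n}+\tfrac12\gamma_1^{n/2}]$, and takes $A=C_\gamma\cap\mathrm{Int}(C_{\gamma,n_0})\setminus f_\gamma^{-1}(M_{n_0})$. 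Since $\sum_{n\ge n_0}(2\sqrt{\gamma_1})^n$ can be made smaller than $p/(2c_1)$, a straight union bound gives $\mathbb{P}(\zero(B-f_\gamma)\cap A\neq\emptyset)\ge p/2$ by \emph{subtraction} from the probability $p$ furnished by Theorem~\ref{prop: hitting characterisation}; no second-moment computation on a restricted family is needed. And for $t\in A$, the finest common level $\ell\ge n_0$ of $t$ and $s$ gives $|f_\gamma(s)-f_\gamma(t)|\ge\tfrac12\gamma_1^{(\ell+1)/2}$ while $|s-t|\le\gamma^\ell$, so Proposition~\ref{prop:basic_facts}(i) applies with exponent $\alpha=\log\gamma_1/(2\log\gamma)<1/2$.

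Translated back to addresses, the condition $f_\gamma(t)\notin M_{n_0}$ is precisely a level-dependent run-length bound (runs starting at level $\ell$ may have length up to a constant times $\ell$), so your instinct was correct. What you were missing is the mechanism that makes this choice work without reopening the moment argument: control the probability of the \emph{removed} set directly via the linear estimate above, and subtract.
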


\begin{proof}
%Define $Z(A)$ and $Z_n(A)$ as the event that set $A$ contains a zero of $B-f_\gamma$ and $B-f_{\gamma,n}$, respectively. 
For $A \subset \mathbb{R}^+$ define $Z(A)$ as the event $\{\zero(B-f_\gamma) \cap A \neq \emptyset\}$.
We claim that there exists a constant $c_1$, such that for any interval $J \subset [0,1]$ of length $|J|$, we have
%Now define the set $M_J:=S_\gamma \cap [1,2] \times J$ as the set of steep points which $x$ coordinate is located in the interval $J$. We claim that there is a constant $C>0$ such that 
\begin{equation}
\label{eq:cuts_probabilities}
\mathbb{P}(Z(C_\gamma \cap f_\gamma^{-1}(J))) \leq c_1 |J|.
\end{equation} 
To prove this, fix an interval $J$ and take the largest integer $n$ such that $|J| \leq 2^{-n}$. Notice that $J$ can be covered by no more than two consecutive binary intervals $J_1$ and $J_2$ of length $2^{-n}$.
%Also there are two intervals $I_1,I_2 \in \mathfrak{C}_{\gamma,n}$ 
Moreover, there are consecutive $I_1,I_2 \in \mathfrak{C}_{\gamma,n}$ such that $f_\gamma(I_i) = J_i$ for  $i=1,2$,
and $C_\gamma \cap  f_\gamma^{-1}(J) \subset I_1 \cup I_2$, see Figure \ref{fig: cantor probabilities}.  Now using the notation from the proof of Theorem \ref{prop: hitting characterisation} and the arguments that lead to (\ref{eq: approximation of probabilities_1-5}) we obtain $\mathbb{P}(Z_n(I_i) \mid Z(C_\gamma \cap I_i)) \geq K^{-1}$ which yields
%By part (v) of Lemma \ref{lemma: hitting estimates} we have 
\begin{equation}
\label{eq:approximation_by_diagonals_for_small_gamma}
\mathbb{P}(Z(C_\gamma \cap I_i)) \leq K \mathbb{P}(Z_n(I_i)).
\end{equation} 
But by the first inequality in (\ref{eq: hitting probabilities}), the probability on the right hand side is bounded from above by $2^{-n}$. Using this fact in (\ref{eq:approximation_by_diagonals_for_small_gamma}) and summing the expression for $i=1,2$, we
%KB prove 
obtain
 (\ref{eq:cuts_probabilities}).

By Theorem \ref{prop: hitting characterisation} the set $\zero(B-f_\gamma) \cap C_\gamma$ is non-empty with some probability $p>0$.
Take an arbitrary $\gamma < \gamma_1 < 1/4$ and $n_0$ such that $\sum_{n\geq n_0} (2\sqrt{\gamma_1})^{n} \leq p/(2c_1)$.
%KB , where $C$ is the constant from (\ref{eq:cuts_probabilities}). 

 For $n \geq n_0$ consider the interval $J_{k,n}=[k2^{-n}-\gamma_1^{n/2}/2, k2^{-n}+\gamma_1^{n/2}/2]$ and define the set $M_{n_0}= \bigcup_{n \geq n_0} \bigcup_{0 \leq k \leq 2^{n}}J_{k,n}$. By (\ref{eq:cuts_probabilities}) and the choice of $n_0$, we have that  $\mathbb{P}(Z(C_\gamma \cap f_\gamma^{-1}(M_{n_0}))) \leq p/2$. Therefore, the probability that there is a zero of $B(t)-f_{\gamma}(t)$ in the set $C_\gamma \cap \text{Int}(C_{\gamma,n_0})\backslash f_\gamma^{-1}(M_{n_0})$ is at least $p/2$ (here $\text{Int}(C_{\gamma,n_0})$ is the interior of the set $C_{\gamma,n_0}$). Now the claim will be proven if we show that any such zero is isolated. Take $t \in C_\gamma \cap \text{Int}(C_{\gamma,n_0})\backslash f_\gamma^{-1}(M_{n_0})$ and any $s \ne t$ in the same connected component of $\text{Int}(C_{\gamma,n_0})$. The largest integer $\ell$ such that both $s$ and $t$ are contained in the same interval of $\mathfrak{C}_{\gamma,\ell}$ satisfies $\ell \geq n_0$. Moreover, $|f_\gamma(s)-f_\gamma(t)| \geq \gamma_1^{(\ell+1)/2}/2$ and $|s-t| \leq \gamma^{\ell}$. Now it is clear that $t$ satisfies the condition in part (i) of Proposition \ref{prop:basic_facts} with $\alpha = \log \gamma_1 / (2 \log \gamma) < 1/2$. Therefore, the statement follows from part (i) of Proposition \ref{prop:basic_facts}.
\end{proof}

\begin{figure}
%\begin{center}
%KB
\begin{comment} %%%%%%%%%%%%%%%%%%%%%%%%%%%%%%%%%%%%%%%%%%%%%%%%%%%
\begin{tikzpicture}[xscale=7,yscale=6]

\draw[->] (-0.1,0) -- (1.1,0);
%\draw[->] (0,-0.1) -- (0,1.1);
\draw[line width=0.15pt] (0,0.6) rectangle (1,0.9);
\fill[gray!40!white] (0,0.6) rectangle (1,0.9);
\def\c{0.3}
\setcounter{order}{5}
\cantor[0,0,1,1]
\draw[line width=0.15pt] (0,0) rectangle (1,1);
\draw[very thick] (0,0.6) -- (0,0.9);

\draw[very thick] (1,0.5) -- (1,0.75);
\draw[very thick] (1,0.75) -- (1,1);

\draw[very thick] (1-\c,0) -- (1-\c+\c*\c,0);
\draw[very thick] (1-\c*\c,0) -- (1,0);

\draw[very thin, loosely dashed] (1-\c*\c,0.75) -- (1,0.75);
\draw[very thin, loosely dashed] (1-\c,0.5) -- (1,0.5);

\draw[very thin, loosely dashed] (1-\c*\c,0) -- (1-\c*\c,0.75);
\draw[very thin, loosely dashed] (1-\c+\c*\c,0) -- (1-\c+\c*\c,0.75);
\draw[very thin, loosely dashed] (1-\c,0) -- (1-\c,0.5);

\draw (0,0.75) node[anchor=east] {$J$};
\draw (1,0.875) node[anchor=west] {$J_2$};
\draw (1,0.625) node[anchor=west] {$J_1$};

\draw (1-\c+\c*\c/2,0) node[anchor=north] {$I_1$};
\draw (1-\c*\c/2,0) node[anchor=north] {$I_2$};

\draw (-0.01,0.6) -- (0.01,0.6);
\draw (-0.01,0.9) -- (0.01,0.9);
\draw (1-0.01,0.5) -- (1+0.01,0.5);
\draw (1-0.01,0.75) -- (1+0.01,0.75);
\draw (1-0.01,1) -- (1+0.01,1);
\draw (1-\c,-0.01) -- (1-\c,0.01);
\draw (1-\c+\c*\c,-0.01) -- (1-\c+\c*\c,0.01);
\draw (1-\c*\c,-0.01) -- (1-\c*\c,0.01);
\draw (1,-0.01) -- (1,0.01);

\end{tikzpicture}
%KB
\end{comment} %%%%%%%%%%%%%%%%%%%%%%%%%%%%%%%%%%%%%%%%%
%\end{center}
\includegraphics{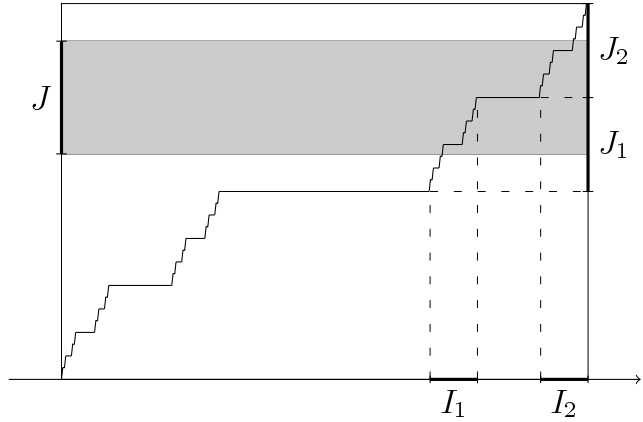}
\caption{An interval $J \subset [0,1]$ can be covered by two dyadic intervals $J_1$ and $J_2$ of comparable size. Intervals $I_1, I_2 \in \mathfrak{C}_{\gamma,n}$ are such that $f_{\gamma}(I_i) = J_i$, for $i=1,2$.}
\label{fig: cantor probabilities}

\end{figure}

\begin{remark}
\label{rem: isolated zeros almost surely}
It is not difficult to construct a continuous function $f$ such that the set $\zero(B-f)$ has isolated points almost surely. Let $f_\gamma$ be the Cantor function with $\gamma < 1/4$ defined on the interval $[0,1]$. Construct the function $f \colon \mathbb{R}^+ \to \mathbb{R}$ such that for every $n \geq 1$ and $0 \leq t \leq 1$ we have $f(4^{-n}(1+3t))=2^{-n}(1+f_\gamma(t))$ and define $f$ on $(1,\infty)$ arbitrarily, see Figure \ref{eq: repeat cantor}. 
By Proposition \ref{prop:isolated_zeros_first_example} and the Cameron-Martin theorem, the probability that $\zero(B-f)$ has an isolated point in the interval $[1/4,1]$ is positive, denote it by $p$. By Brownian scaling the probability that there is an isolated point of $\zero(B-f)$ in the interval $[4^{-n-1},4^{-n}]$ is also equal to $p$, for any $n \geq 1$.
Therefore, in view of Remark \ref{6.30.1} (i), the probability that there is an isolated point  of $\zero(B-f)$ in the interval $[4^{-n-1},4^{-n}]$, for infinitely many $n$'s is bounded from below by $p$. By Blumenthal's zero-one law this event has probability one, which proves the claim.
\end{remark}

\begin{figure}
%%%%%%%%%%%%%%%%%%%%%%%%%%%%%%%%%%%%%%%%%
%\end{center}
\includegraphics{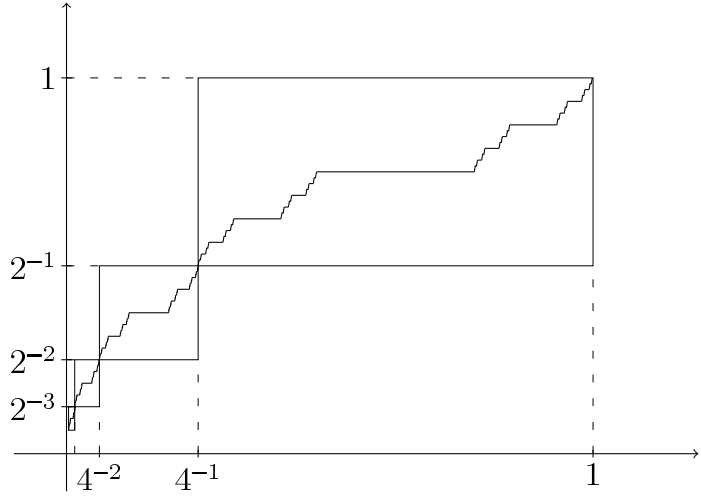}
\caption{Construction of the function $f$ from Remark \ref{rem: isolated zeros almost surely}. On each interval $[4^{-n-1},4^{-n}]$ function $f$ is a scaled and shifted copy of the Cantor function  (on the interval $(1,\infty)$ the function is defined arbitrarily).}
\label{eq: repeat cantor}
\end{figure}

The following proposition shows that, with positive probability, the set $\zero(B-f)$ can have only isolated points, or even only one point.

\begin{proposition}
\label{prop: only one zero}
There exists a continuous function $f$ such that, with positive probability, $\zero(B-f)$ is a singleton. 
\end{proposition}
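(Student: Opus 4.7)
My plan is to trap a scaled Cantor function between continuous ``walls'' so that $\zero(B-f)$ is automatically confined to a single bounded interval, and then adapt the second moment estimate from the $\gamma<1/4$ case of the proof of Theorem~\ref{prop: hitting characterisation} to obtain a singleton with positive probability. Fix $\gamma<1/4$, a large constant $M>0$, and a small $\delta\in(0,1)$ (both to be tuned), and define
\begin{equation*}
f(t)=\begin{cases} M, & t\in[0,1],\\ M+\delta\, f_\gamma(t), & t\in[1,2],\\ M+\delta+(t-2), & t\in(2,\infty),\end{cases}
\end{equation*}
with $f_\gamma\colon[1,2]\to[0,1]$ the Cantor function of parameter $\gamma$. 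This $f$ is continuous on $\mathbb{R}^+$.

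The two walls handle the ``easy'' pieces. By the reflection principle, $\mathbb{P}(\sup_{t\in[0,1]} B(t)<M)\to 1$ as $M\to\infty$; and by standard Brownian-motion-with-drift estimates applied after $t=2$, $\mathbb{P}(B(t)<M+\delta+(t-2)\text{ for all }t>2)\to 1$ as $M\to\infty$. So on an event of probability close to $1$ every zero of $B-f$ sits in $(1,2)$. For the interval $(1,2)$, let $Z_n=\sum_{I\in\mathfrak{C}_{\gamma,n}}\mathbf{1}(Z_n(I))$ be defined as in Section~\ref{Isolated zeros - first example} but with our scaled $f$. Retracing the computations (\ref{eq: hitting probabilities})--(\ref{eq:second_moment_estimates_below_1/4_3}) with the factor $\delta$ kept, one obtains (uniformly in $n$)
\[
\mathbb{E}[Z_n]=\Theta(\delta) \quad\text{and}\quad \mathbb{E}[Z_n(Z_n-1)]=O(\delta^2),
\]
the pair sum collapsing exactly as in the paper because $2\sqrt{\gamma}<1$. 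Paley-Zygmund (Remark~\ref{6.30.1}~(ii)) yields $\mathbb{P}(Z_n\geq 1)\geq c\delta$, while Markov applied to $Z_n(Z_n-1)$ yields $\mathbb{P}(Z_n\geq 2)\leq C\delta^2$, so $\mathbb{P}(Z_n=1)\geq c\delta-C\delta^2>0$ uniformly in $n$ for $\delta$ small. Choosing $M$ large enough that the two wall-failure probabilities sum to less than $c\delta-C\delta^2$ forces positive probability of the joint event ``both walls hold and $Z_n=1$'' for some (large) $n$.

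The main obstacle is promoting ``$Z_n=1$'' (or even ``$Z_n=1$ infinitely often'') into the assertion that $\zero(B-f)\cap(1,2)$ is a singleton. The count $Z_n$ only records hits at the right endpoints of level-$n$ Cantor intervals and is completely insensitive to zeros living inside gaps of $C_\gamma$, where $f_\gamma$ is locally constant and level crossings of $B$ cluster densely. To deal with this, I would perturb $\delta f_\gamma$ on $[1,2]$ by a strictly increasing Hölder term $\eta(t-1)^\beta$ for small $\eta>0$ and some $\beta\in(\log 2/\log(1/\gamma),1/2)$; such a term is too weak to affect the Hölder-$\alpha$ behaviour at Cantor points needed for Proposition~\ref{prop:basic_facts}(i) or the second-moment bounds, but it makes $f$ strictly increasing on $[1,2]$ and so eliminates gap clusters. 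Combined with the preimage bound $\mathbb{P}(Z(C_\gamma\cap f_\gamma^{-1}(J)))\leq c_1|J|$ from the proof of Proposition~\ref{prop:isolated_zeros_first_example} (which on a subevent of positive probability pushes the zero off a dense family of ``bad'' heights, thereby forcing the $\liminf_n Z_n$ and the true count of zeros to agree), the resulting event has $\zero(B-f)=\{\tau\}$ for an isolated $\tau\in C_\gamma\cap(1,2)$, as required.
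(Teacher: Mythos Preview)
Your construction has a sign problem that kills the argument before the delicate part even begins. Both of your walls keep $B$ \emph{below} $f$: on $[0,1]$ you ask for $B<M=f$, and on $(2,\infty)$ you ask for $B(t)<M+\delta+(t-2)=f(t)$. In particular $B(1)-f(1)<0$ and $B(2)-f(2)<0$ on the wall event. If in addition $\zero(B-f)\cap(1,2)$ were a singleton, that unique zero would have to be a local maximum of $B-f$, and by Proposition~\ref{prop: zeros extremes and record times}(i) this has probability zero. The paper's proof handles exactly this obstruction: it first localizes to an interval $(q_1,q_2)$ containing a unique (isolated) zero, then argues via Proposition~\ref{prop: zeros extremes and record times}(i) and the nonincrease theorem of Dvoretzky--Erd\H{o}s--Kakutani that necessarily $B(q_1)>f(q_1)$ and $B(q_2)<f(q_2)$, and only then builds walls of \emph{opposite} orientation on the two sides. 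Your symmetric ``$B$ below $f$ on both sides'' picture is incompatible with a singleton zero.

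The second gap is in the promotion step. Your perturbation $\eta(t-1)^\beta$ is smooth on every Cantor gap $(a,b)\subset(1,2)$ with $a>1$; restricted to such a gap it is Lipschitz, hence in the Cameron--Martin space, so $\zero(B-f)\cap(a,b)$ has the same almost-sure structure as the level set of Brownian motion itself---either empty or a perfect set. Making $f$ strictly increasing does nothing to prevent these clusters. Thus even after the perturbation, whenever $B-f$ changes sign across a gap (which it must, somewhere, if it goes from negative at $t=1$ to a zero in $C_\gamma$), you pick up uncountably many extra zeros. The paper sidesteps this entirely: rather than trying to engineer a global singleton directly, it exploits the already-established existence of isolated zeros (Proposition~\ref{prop:isolated_zeros_first_example}) to find a small interval containing exactly one zero, and then modifies $f$ only \emph{outside} that interval.
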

\begin{proof}
Take $f_\gamma$ to be the Cantor function with $\gamma < 1/4$ defined on the interval $[1,2]$. 
Since $\zero(B-f_\gamma)$ has isolated points with positive probability, there are two rational numbers $q_1 < q_2$
%, an $\epsilon > 0$ and a set $S \subset (f_\gamma(q_1)+\epsilon,\infty)$ of positive Lebesgue measure, 
such that,
%for any $y \in S$, conditional on $B(q_1)=y$, 
with positive probability, there will be only one zero of $B-f_\gamma$ in the interval $(q_1,q_2)$. Denote this event by $D$ and on this event the unique zero by $\tau$. 
%and $B(q_2)<f_\gamma(q_2)-\epsilon$.  
%Recall from the proof of Proposition \ref{prop:basic_facts} part (ii) that, on the event $D$, we have $\tau=\tau_q$ for some $q \in \mathbb{Q}$, almost surely. Moreover, since $\tau_q$ is a stopping time, by strong Markov property, 
Note that on the event
$D\cap \{(B(q_1) - f_\gamma(q_1))(B(q_2) - f_\gamma(q_2))>0\}$ 
%and $D\cap \{B(q_1) > f(q_1)\} \cap \{B(q_2)>f(q_2)\}$
the unique zero $\tau$ is necessarily a local extremum of the process $B-f_\gamma$ and by part (i) of the upcoming Proposition \ref{prop: zeros extremes and record times} this event has  probability zero. Furthermore, on the event $D\cap \{B(q_1) < f_\gamma(q_1)\} \cap \{B(q_2)>f_\gamma(q_2)\}$ the unique zero $\tau$ is necessarily a local point of increase of the Brownian motion. 
By a result of Dvoretzky, Erd\"{o}s and Kakutani in \cite{DEK61}, almost surely, Brownian motion has no points of increase 
and thus 
\[\mathbb{P}(D,B(q_1) < f_\gamma(q_1),B(q_2)>f_\gamma(q_2)) =0,\] 
see also Theorem 5.14 in \cite{MP}.
%Another way to argue, would be to recall from the proof of Proposition \ref{prop:basic_facts} part (ii) that on the event $D$ we have $\tau=\tau_q$ for some $q \in \mathbb{Q}$, and by the strong Markov property the Brownian motion $B$ changes sign right of, and arbitrarily close to $\tau_q$.
Next define 
\[S=\{y>f_\gamma(q_1): \mathbb{P}(D\mid B(q_1) =y)>0\},\] 
and notice that by the Markov property and the discussion above $\mathbb{P}(B(q_1) \in S) > 0$. This implies that the set $S$ is of positive Lebesgue measure and so is $S_1=S \cap (f_\gamma(q_1)+\epsilon,\infty)$, for $\epsilon$ small enough. 
%Thus we can choose $\epsilon > 0$ small enough so that $S\cap (q_1+\epsilon, \infty)$ has positive Lebesgue measure. 
Now the claim will follow if we prove that there is a modification $f$ of the function $f_\gamma$ on the intervals $(0,q_1)$ and $(q_2,\infty)$, such that 
\begin{itemize}
\item[1)] with positive probability there are no zeros of $B-f$ in $(0,q_1)$ and $B(q_1)\in S_1$, 
\item[2)] for any $y< f_\gamma(q_2)-\epsilon$, conditional on $B(q_2)=y$, with positive probability there are no zeros of $B-f$ in $(q_2,\infty)$.
\end{itemize}

For 1) define $f$ to be linear on $[0,q_1]$ with $f(0)=-\epsilon$ and $f(q_1)=f_\gamma(q_1)$, if we do not require $f(0)=0$. To prove that the probability that both 
%KB added { } to indicate events in this sentence
$\{\zero(B-f)\cap(0,q_1) = \emptyset\}$ and $\{B(q_1) \in S_1\}$ happen is positive, by the Cameron-Martin theorem, it is enough to prove that
\[\mathbb{P}\Big(\min_{t \in [0,q_1]}B(t) > -\epsilon, B(q_1) \in S_2=S_1-f_\gamma(q_1)-\epsilon\Big)>0.\] 
%happen is positive. 
While this is intuitively obvious it can be proven by using the reflection principle at the first time the Brownian motion hits the level $-\epsilon$ to conclude that the probability that both $\min_{0 < t < q_1}B(t) \leq -\epsilon$ and $B(q_1)\in S_2$ happen is equal to the probability that $B(q_1) \in S_3$, where $S_3$ is obtained by reflecting the set $S_2$ around $-\epsilon$. Since $S_2 \subset \mathbb{R}^+$ we have $\mathbb{P}(B(q_1) \in S_3) < \mathbb{P}(B(q_1) \in S_2)$ and therefore 
\[
\mathbb{P}\Big(\min_{t \in [0,q_1]}B(t) > -\epsilon, B(q_1) \in S_2\Big) = \mathbb{P}(B(q_1) \in S_2) - \mathbb{P}(B(q_1) \in S_3) > 0.
\]
Now the probability that there is a unique zero in the interval $(0,q_2)$ is equal to some $p>0$.
If we do require $f(0)=0$, redefine $f$ on the interval $(0,\delta)$ as $f(t) = -c t^{1/3}$. Here $c>0$ is chosen large enough so that for $\delta>0$, for which $f$ is continuous, we have $\mathbb{P}(\zero(B-f)\cap (0,\delta)) < p$.

%where $\delta > 0$ is chosen so that $f$ is continuous. Here we need to choose $c$ large enough so that the probability that there is a zero of $B(t)-f(t)$ in $(0,\delta)$ is less than $p$.

To satisfy the condition
%KB 2) modify 
2), replace
 $f_\gamma$ on $(q_2,\infty)$ by a linear function of slope $1$, that is $f(q_2+t)-f_\gamma(q_2) = t$. To prove that this $f$ satisfies the required condition on $(q_2,\infty)$ it is enough to prove that, for standard Brownian motion $B$, with positive probability there are no zeros of the process $B(t)-t-\epsilon$.
%KB This probability can actually be directly calculated to be $1-e^{-2\epsilon}$ by applying the optional stopping theorem to the stopping time $\tau \wedge n$ and the martingale $\exp(2B(t)-2t)$, where $\tau$ is the first zero of the process $B(t)-t-\epsilon$, and letting $n \to \infty$.
This probability is equal to $1-e^{-2\epsilon}$ by (5.13), Sect. 3.5 in \cite{KS}.
 See Figure \ref{fig: only one zero}.

%For a rational number $q$ define $\tau_q$ as the smallest zero in $\zero(B-f_\gamma)$ greater than $q$. 
%As mentioned in the proof of Proposition \ref{prop:basic_facts} all isolated zeros are of the form $\tau_q$. Moreover since the probability that $B(q)=f_\gamma(q)$ for some $q \in \mathbb{Q}$ is zero we have that almost surely all zeros of the form $\tau_q$ are isolated from the left. Since by ??? the set $\zero(B-f_\gamma)$ contains isolated points with positive probability, there is a rational number $q$ such that the probability that $\tau_q$ is an isolated zero is positive. 
\end{proof}

\begin{figure}
%%%%%%%%%%%%%%%%%%%%%%%%%%%%%%%%%%%%%%%%%
%\end{center}
\includegraphics{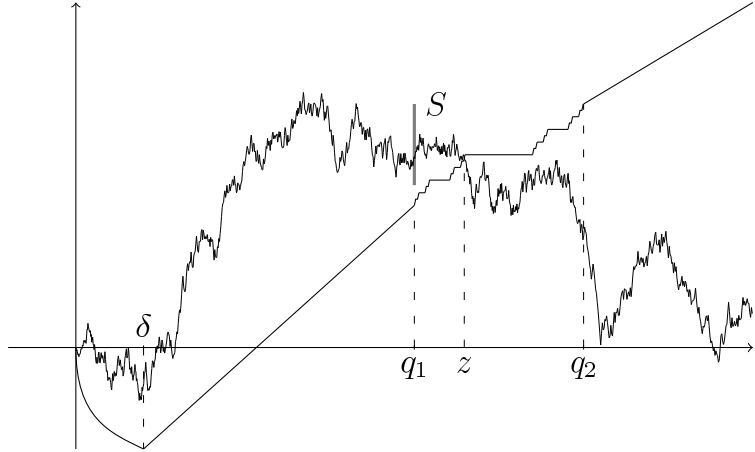}
\caption{Function $f$ from Proposition \ref{prop: only one zero}. It is a continuous function defined as $f(t)=-c t^{1/3}$ on $[0,\delta]$, linear on both $[\delta,q_1]$ and $[q_2,\infty)$ and is a part of the Cantor function on $[q_1,q_2]$. 
%KB There
For this trajectory of $B$, there
 is only one zero of the process $B-f$, labelled as $z$.}
\label{fig: only one zero}
\end{figure}

The next proposition justifies Remark \ref{rem:no_isolated_extremes}.

\begin{proposition}
\label{prop: zeros extremes and record times}
Let $f \colon \mathbb{R}^+ \to \mathbb{R}$ be a continuous function and define the process $X(t) = B(t) - f(t)$. 
\begin{itemize}
\item[(i)] Almost surely there are no points in $\zero(X)$ which are local extrema.
\item[(ii)] Define $M_X(t)=\max_{0 \leq s \leq t}X(s)$ and the set of record times of the process $X$ as $\rec(X) = \{t>0: X(t) = M_X(t)\}$. Almost surely there are no isolated points in the set $\rec(X)$.
\end{itemize}
\end{proposition}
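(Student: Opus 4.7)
For \textbf{(i)}, I fix rationals $q_1 < q_2$ and observe that $m_{q_1,q_2} := \min_{s \in [q_1,q_2]} X(s)$ and $M_{q_1,q_2} := \max_{s \in [q_1,q_2]} X(s)$ each have absolutely continuous distributions on $\mathbb{R}$: since $X = B - f$ is a Brownian motion shifted by a deterministic continuous drift, conditioning on $X(q_1)$ reduces the question to the known density of a Brownian minimum/maximum over a fixed interval. Hence $\mathbb{P}(m_{q_1,q_2} = 0) = \mathbb{P}(M_{q_1,q_2} = 0) = 0$, and a countable union over rational pairs rules out, almost surely, any local extremum of $X$ at value $0$: any local minimum $t$ with $X(t)=0$ would force $m_{q_1,q_2}=0$ for any rational pair $q_1<t<q_2$ inside the neighborhood of local minimality, and similarly for maxima.

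For \textbf{(ii)}, the key observation is that every isolated point $t_0 \in \rec(X)$ equals $\tau_q := \inf\{s \geq q : X(s) \geq M_X(q)\}$ for some rational $q$: picking $q \in \mathbb{Q} \cap (t_0 - \delta, t_0)$ inside the isolation interval ensures $q \notin \rec(X)$, so $X(q) < M_X(q)$, and the absence of records in $[q, t_0)$ gives $\tau_q = t_0$. It therefore suffices to show $\mathbb{P}(\tau_q \text{ is isolated in } \rec(X)) = 0$ for each rational $q$ and take a countable union. Since $\tau_q$ is the first record on $[q, \infty)$, left-isolation within $[q, \tau_q)$ is automatic, reducing the task to ruling out right-isolation, i.e.\ $X(\tau_q + h) < X(\tau_q)$ on some $(0, \epsilon)$. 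By strong Markov, $\tilde{B}(h) := B(\tau_q + h) - B(\tau_q)$ is a Brownian motion independent of $\mathcal{F}_{\tau_q}$, and right-isolation is equivalent to $\tilde{B}(h) < f(\tau_q + h) - f(\tau_q)$ on $(0, \epsilon)$.

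Applying the argument from the proof of Proposition~\ref{prop:basic_facts}(ii) verbatim at the stopping time $\tau_q$: on $\{\tau_q \in \overline{A}_f(M)\}$ one finds infinitely many $n$ with $\tilde{B}(s_n^-(\tau_q)) \geq M\sqrt{s_n^-(\tau_q)} \geq f(\tau_q + s_n^-(\tau_q)) - f(\tau_q)$, hence $X(\tau_q + s_n^-(\tau_q)) \geq X(\tau_q)$; the argmax of $X$ on $[\tau_q, \tau_q + s_n^-(\tau_q)]$ is then a record strictly greater than $\tau_q$, violating right-isolation. Letting $M \to \infty$ eliminates $\tau_q \in (A_f^+ \cup A_f^-)^c$, and a symmetric use of the law of iterated logarithm (since $\tilde{B}$ takes positive values at arbitrarily small scales while $f(\tau_q+h)-f(\tau_q) \leq -M\sqrt{h}$ eventually on $A_f^-$) rules out $\tau_q \in A_f^-$. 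Thus right-isolation forces $\tau_q \in A_f^+$.

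The main obstacle, which goes beyond Proposition~\ref{prop:basic_facts}, is to establish $\mathbb{P}(\tau_q \in A_f^+) = 0$. I would combine two facts: (a) $A_f^+$ has Lebesgue measure zero for any continuous $f$---at a Lebesgue density point $t_0$ of $\{t : f(t+h) \geq f(t) + M\sqrt{h},\ \forall h \in (0, 1/k]\}$, one selects $\sim N$ nearly equi-spaced points in this set inside a short interval $(t_0, t_0+\epsilon)$ and iterates $f(s_{i+1}) \geq f(s_i) + M\sqrt{s_{i+1} - s_i}$ to deduce $f(s_N) \geq f(t_0) + cM\sqrt{N\epsilon} \to \infty$, contradicting the boundedness of $f$ on compacts; and (b) $\tau_q$ has an absolutely continuous distribution on $(q,\infty)$, as the first passage time of Brownian motion to the continuous moving boundary $t \mapsto M_X(q) + f(t)$ started strictly below it (a standard consequence of the Girsanov transformation). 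Together (a) and (b) give $\mathbb{P}(\tau_q \in A_f^+) = 0$, and a countable union over rational $q$ completes the proof.
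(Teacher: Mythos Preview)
Your proof of (i) is essentially the same as the paper's.

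For (ii), however, you take a much more involved route, and there is a genuine gap. Your step (b) asserts that $\tau_q$ has an absolutely continuous law as ``a standard consequence of the Girsanov transformation.'' This is not valid here: Girsanov requires the drift to lie in the Cameron--Martin space $\mathbf{D}(I)$, whereas $f$ is only assumed continuous. For such $f$ the law of $B-f$ need not be absolutely continuous with respect to Wiener measure, and the absolute continuity of the first-passage time of Brownian motion to an arbitrary continuous moving boundary is not a standard fact---it is not clear that it even holds in this generality. Without (b) you cannot conclude $\mathbb{P}(\tau_q \in A_f^+)=0$, and the argument breaks down exactly in the regime (e.g.\ the Cantor drifts $f_\gamma$ with $\gamma<1/4$) that motivates the paper. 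Your argument for (a) is also somewhat informal (the ``nearly equi-spaced points'' selection needs care to give a lower bound on $\sum_i\sqrt{s_{i+1}-s_i}$), though the conclusion is correct and follows from Lemma~\ref{lemma:set_with_isolated_zeros}(i).

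The paper's proof of (ii) avoids first-passage analysis entirely and is far shorter. It observes that an isolated record time $s$ must be a local maximum of $X$, and that left-isolation forces the existence of an earlier record time $r<s$, also a local maximum, with $X(r)=X(s)$. The problem therefore reduces to showing that $X$ almost surely has no two equal local maxima. This follows from the same independence trick you used in (i): for disjoint rational intervals $[q_1,r_1]<[q_2,r_2]$, the difference of the two maxima decomposes as a sum of three independent random variables, one of which (namely $X(q_2)-X(r_1)$) has a continuous distribution.
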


\begin{proof}
%KB \begin{itemize}
%KB \item[(i)]
(i)
 Take an interval $[q_1,q_2]$ with $q_1>0$ and let $M$ be the maximum of the process $X$ on this interval. Then,
%KB by
since the process $X$ has independent increments, $X(q_1)$ and $M-X(q_1)$ are independent. Since $X(q_1)$ has a continuous distribution, so has $M=(M-X(q_1))+X(q_1)$, and therefore $\mathbb{P}(M=x)=0$ for any $x \in \mathbb{R}$. Taking $x=0$ and a union over all rational $q_1 < q_2$ proves the claim for local maxima. Similarly the statement holds for local minima. See Figure \ref{fig: zeros and extrema}.

\begin{figure}
%\begin{center}
%KB
\begin{comment} %%%%%%%%%%%%%%%%%%%%%%%%%%%%%%%%%%%%%%%%%%%%%%%%%%%
\begin{tikzpicture}[xscale=10,yscale=8]

\draw[->] (-0.1,0) -- (1,0);
\draw[->] (0,-0.4) -- (0,0.2);

\pgfmathsetmacro{\aa}{-0.01}
\pgfmathsetmacro{\aaa}{-0.01}
\pgfmathsetmacro{\hh}{\aaa-\aa}
\def\m{0.5*sqrt(3)}

\setcounter{order}{400}
\pgfmathsetmacro{\i}{400}
\pgfmathsetmacro{\ii}{sqrt(\i)}
\def\x{0}
\def\s{0}
\def\t{0}
\def\z{0}
\def\ttt{0.5}
\def\zz{-0.2}
\pgfmathsetmacro{\w}{\ttt-\t}
\pgfmathsetmacro{\h}{\zz-\z}
\pgfmathsetmacro{\ww}{sqrt(\w)}
\bb

\setcounter{order}{160}
\pgfmathsetmacro{\i}{160}
\pgfmathsetmacro{\ii}{sqrt(\i)}
\def\x{0}
\def\s{0}
\def\ttt{0.7}
\def\zz{-0.02}
\pgfmathsetmacro{\w}{\ttt-\t}
\pgfmathsetmacro{\h}{\zz-\z}
\pgfmathsetmacro{\ww}{sqrt(\w)}
\bbcond

\draw[very thin] (\t,\z) -- (0.7+0.2/\i,0);
\draw[very thin] (0.7+0.2/\i,0) -- (0.7+0.4/\i,-0.02);

\setcounter{order}{320}
\pgfmathsetmacro{\i}{320}
\pgfmathsetmacro{\ii}{sqrt(\i)}
\def\x{0}
\def\s{0}
\def\t{0.7+0.4/\i}
\def\z{-0.02}
\def\ttt{1}
\def\zz{-0.1}
\pgfmathsetmacro{\w}{\ttt-\t}
\pgfmathsetmacro{\h}{\zz-\z}
\pgfmathsetmacro{\ww}{sqrt(\w)}
\bbcond

\draw (0.7+0.2/\i,0) node [anchor=south] {$t$};
\draw (0.7+0.2/\i,-0.01) -- (0.7+0.2/\i,0.01);

\draw (0.7+0.2/\i-0.1,0) node [anchor=south] {$q_1$};
\draw (0.7+0.2/\i+0.1,0) node [anchor=south] {$q_2$};
\draw (0.7+0.2/\i-0.1,-0.01) -- (0.7+0.2/\i-0.1,0.01);
\draw (0.7+0.2/\i+0.1,-0.01) -- (0.7+0.2/\i+0.1,0.01);

\end{tikzpicture}
%KB
\end{comment} %%%%%%%%%%%%%%%%%%%%%%%%%%%%%%%%%%%%%%%%%
%\end{center}
\includegraphics{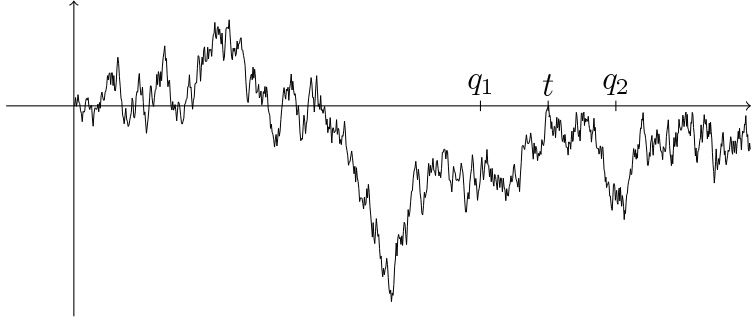}
\caption{Process $X(t)=B(t)-f(t)$ having a zero and a local extremum at some $t>0$; this is an event of probability zero.}
\label{fig: zeros and extrema}
\end{figure}

%KB \item[(ii)]
(ii)
 For any continuous function $g$, any record time $s \in \rec(g)$ is a maximum of $g$ on the interval $[s-\epsilon,s]$, for
%KB some 
every
 $\epsilon >0$.
Let $s > 0$ be an isolated point in $\rec(g)$. Then $s$ is a local maximum, because otherwise we would have record times
%KB right of $s$ and
to the right of $s$,
 arbitrarily close to $s$. Since $g$ is continuous and there are no record times in the interval $(s-\epsilon,s)$ for some $\epsilon>0$, there has to be an $r \in \rec(g)\cup\{0\}$, which is also a local maximum
and such that $r<s$ and $g(r)=g(s)$.
Applying these observations to $g(t)=X(t)=B(t)-f(t)$, we see that, in order to prove the claim, it is enough to show that the process $X=B-f$ does not have two equal local maxima almost surely. See Figure \ref{fig: isolated record times}.
This is well known for standard Brownian motion and can be proven in the same way for the process $X$.
%To end this take two disjoint intervals
Namely, for two intervals $[q_1,r_1]$ and $[q_2,r_2]$, with $r_1 < q_2$, define the random variables $Y_1 = X(r_1) - \max_{q_1 \leq t \leq r_1}X(t)$ and $Y_3 = \max_{q_2 \leq t \leq r_2}X(t)-X(q_2)$, and let $Y_2=X(q_2)-X(r_1)$. Clearly these three random variables are independent.
%Moreover maxima on the intervals $I_1$ and $I_2$ will be equal if and only if $Y_1+Y_2+Y_3 = 0$.
Since $Y_2$ is a continuous random variable, so is $Y_1+Y_2+Y_3$ and
%But this random variable is continuous because of independence and since $Y_2$ is continuous. Therefore,
$\mathbb{P}(Y_1+Y_2+Y_3=0)=0$. Therefore, almost surely the maxima on $[q_1,r_1]$ and $[q_2,r_2]$ are different. Taking
%KB a
the
 union over all possible rational $q_1$, $r_1$, $q_2$ and $r_2$ as above proves the claim.

\begin{figure}
%%%%%%%%%%%%%%%%%%%%%%%%%%%%%%%%%%%%%%%%%
%\end{center}
\includegraphics{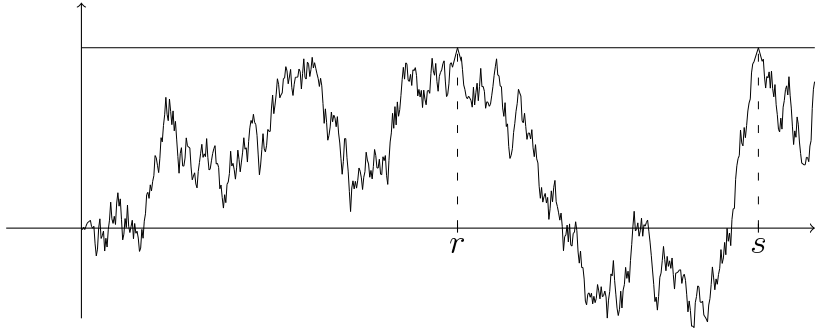}
\caption{
%KB Process $X(t)=B(t)-f(t)$ having an isolated record time at $s$ implies the existence of a record time $r<s$ for which $X(r)=X(s)$.
If $X(t)=B(t)-f(t)$ has an isolated record time at time $s$ then there exists a record time $r<s$ for which $X(r)=X(s)$.
 In this case both record times $r$ and $s$ have to be local maxima. This is an event of probability zero.}
\label{fig: isolated record times}
\end{figure}
%KB \end{itemize}
\end{proof}

\section{On Hausdorff dimension of zero sets}
In this Section we prove Theorems \ref{set A_f has dim less than 1/2 }, \ref{thm:hausdorff_lower_bound} and \ref{thm:hausdorff_upper_bound}.

%KB moved from above and modified
Recall that an interval $I$ is called \textit{dyadic} if it is of the form $I=[k2^{m},(k+1)2^m]$ for some integers $k>0$ and $m$. 
For a dyadic interval $I$, the set of its subintervals which are dyadic and of length $2^{-n}|I|$, will be denoted by $ \di_n(I)$.
For intervals $I$ and $J$ we will write $I < J$ if $J$ is located to the right of $I$.
%KB end new
 \begin{proof} [Proof of Theorem \ref{set A_f has dim less than 1/2 }]
Recall the definition of sets $A_f^+$ and $A_f^-$ from part (ii) of Proposition \ref{prop:basic_facts}. It  is enough to prove that both of these sets have Hausdorff dimension at most $1/2$.
%By Part (ii) of Proposition \ref{prop:basic_facts} it is enough to prove that, for a monotone $f$, the set $A_f = \{t \in \mathbb{R}^+: \limsup_{h \downarrow 0}\frac{|f(t+h)-f(t)|}{\sqrt{h}} = \infty\}$ has Hausdorff dimension at most $1/2$. 
This follows from the case $\alpha=1/2$ of part (i) of the following lemma, which estimates the Hausdorff dimension of larger sets.
\end{proof}

\begin{lemma}
\label{lemma:set_with_isolated_zeros}
%KB \begin{itemize}
%KB \item[(i)] 
(i)
 For a locally bounded function $f \colon \mathbb{R}^+ \to \mathbb{R}$ and $0 < \alpha < 1$ the sets
$B_f^+ = \{t\in \mathbb{R}^+: \liminf_{h \downarrow 0}\frac{f(t+h)-f(t)}{h^\alpha} > 0\}$  and  
$B_f^- = \{t\in \mathbb{R}^+: \limsup_{h \downarrow 0}\frac{f(t+h)-f(t)}{h^\alpha} < 0\}$
have Hausdorff dimension at most $\alpha$.
%KB \item[(ii)]

(ii)
 Assume $f \colon \mathbb{R}^+ \to \mathbb{R}$ has bounded variation on compact sets. For any $0 < \alpha < 1$ the set 
\[
B_f=\left\{ t\in \mathbb{R}^+ : \limsup_{h\downarrow 0} \frac{\left|f(t+h)-f(t)\right|}{h^\alpha} > 0 \right\}
\]
%satisfies $\dim(B_f) \leq 1/2$.
has Hausdorff dimension at most $\alpha$.
%KB \end{itemize}
\end{lemma}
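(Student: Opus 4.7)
The plan is to decompose each set in question into a countable union of nicer subsets and to bound the $\alpha$-Hausdorff measure of each piece; countable stability of Hausdorff dimension then yields the claim.

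For part (i), since $B_f^- = B_{-f}^+$ and $-f$ is still locally bounded, it suffices to treat $B_f^+$. I would write $B_f^+ = \bigcup_{k,n \geq 1} E_{k,n}$ where $E_{k,n} = \{t>0 : f(t+h) - f(t) \geq k^{-1} h^\alpha \text{ for all } h \in (0, 1/n]\}$, and fix $T > 0$, $k$, $n$, and $M$ with $|f| \leq M$ on $[0, T+1]$. The key observation is that for $s < t$ in $E_{k,n}$ with $t - s \leq 1/n$ one has $f(t) - f(s) \geq k^{-1}(t-s)^\alpha$, equivalently $t - s \leq k^{1/\alpha}(f(t) - f(s))^{1/\alpha}$. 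I would partition $[0, T]$ into sub-intervals $J_1, \dots, J_N$ of length $\leq 1/(2n)$; on each $J_i$, the image $f(E_{k,n} \cap J_i)$ sits inside $[-M, M]$, hence is covered by $\lceil 2M/\epsilon \rceil + 1$ intervals of length $\epsilon$, and the corresponding preimages intersected with $E_{k,n} \cap J_i$ have diameter at most $k^{1/\alpha}\epsilon^{1/\alpha}$ by the key observation. Summing $\alpha$-powers yields a cover of $E_{k,n} \cap [0, T]$ with total $\alpha$-sum bounded independently of $\epsilon$; sending $\epsilon \to 0$ forces $\mathcal{H}^\alpha(E_{k,n} \cap [0, T]) < \infty$, hence $\dim(E_{k,n}) \leq \alpha$.

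For part (ii), write $B_f = \bigcup_{k \geq 1} F_k$ where $F_k = \{t > 0 : |f(t + h) - f(t)| \geq k^{-1} h^\alpha \text{ for a sequence of } h > 0 \text{ decreasing to zero}\}$; fix $T > 0$ and let $V$ be the total variation of $f$ on $[0, T]$, which is finite by assumption. For each $\delta > 0$, consider the family $\mathcal{G}_\delta$ of symmetric intervals $[t - h, t + h]$ with $t \in F_k \cap [0, T]$, $0 < h < \delta$, and $|f(t+h) - f(t)| \geq k^{-1} h^\alpha$; by the definition of $F_k$ this family contains arbitrarily small intervals centered at each point of $F_k \cap [0, T]$. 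The $5r$-covering lemma produces a disjoint countable subfamily $\{[t_j - h_j, t_j + h_j]\}_j$ whose concentric 5-fold enlargements still cover $F_k \cap [0, T]$. Since each one-sided interval $[t_j, t_j + h_j]$ lies inside its corresponding disjoint symmetric interval, the one-sided intervals are pairwise disjoint as well, and after reordering their endpoints define a partition of a subset of $[0, T]$. Bounded variation then yields $\sum_j k^{-1} h_j^\alpha \leq \sum_j |f(t_j + h_j) - f(t_j)| \leq V$, so the 5-fold enlarged cover has $\alpha$-sum at most $10^\alpha k V$, uniformly in $\delta$. Letting $\delta \to 0$ gives $\mathcal{H}^\alpha(F_k \cap [0, T]) \leq 10^\alpha k V < \infty$ and hence $\dim(F_k) \leq \alpha$.

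The main obstacle I anticipate is ensuring the covering in part (ii) covers \emph{all} of $F_k$ rather than merely almost all of it with respect to Lebesgue measure. My intended resolution is to apply the $5r$-covering lemma to the symmetrized intervals, which automatically produces a cover of the whole set by inflating the chosen balls, rather than the classical Vitali covering theorem, which only yields an almost-everywhere cover and therefore cannot control Hausdorff measures directly. A secondary subtlety is propagating disjointness from the symmetric intervals to the one-sided ones needed to invoke bounded variation; this is immediate because each one-sided interval is contained in its symmetric parent.
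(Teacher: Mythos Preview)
Your proof is correct and follows the same overall strategy as the paper: decompose each set into countably many pieces indexed by the lower bound on the $\liminf$/$\limsup$ and by the scale, then show each piece has finite $\alpha$-Hausdorff measure. The differences are only in the implementation. For part (i), the paper telescopes the inequality $f(t_{i+1})-f(t_i)\geq n^{-1}(t_{i+1}-t_i)^\alpha$ along any finite chain of points in $B_{f,n}^+$ lying in a short interval, bounding $\sum_i (t_{i+1}-t_i)^\alpha$ by $n$ times the oscillation of $f$, and applies this to the extreme points of the set in each dyadic subinterval; you instead note that $f$ restricted to $E_{k,n}\cap J_i$ is strictly increasing with inverse satisfying a $1/\alpha$-H\"older bound, and pull back a fine partition of the range. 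For part (ii), the paper invokes Besicovitch's covering theorem to split the cover into boundedly many disjoint subfamilies, while you use the $5r$-covering lemma to extract a single disjoint subfamily at the cost of a fixed enlargement factor. Either covering lemma suffices here; your choice is arguably more elementary in one dimension.
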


\begin{proof}
%KB \begin{itemize}
%KB\item[(i)] 
(i)
 Since $B_f^- = B_{-f}^+$ it is enough to prove the claim for $B_f^+$. First cover the set $B_f^+$  by sets $B_{f,n}^+ = \{t \in \mathbb{R}^+ :  f(t+h)-f(t) \geq h^\alpha/n, \text{ for all } 0 < h \leq 2^{-n}\}$, that is $B_f^+ = 
%KB \cup_{n=1}^\infty 
 \bigcup_{n=1}^\infty 
 B_{f,n}^+$. Note that for any positive integer $k$, and any $k$ points $t_1 < t_2 < \dots < t_k$ from $B_{f,n}^+$ such that $t_k \leq t_1 + 2^{-n}$, we have 
\[
f(t_k) \geq f(t_{k-1}) + \frac{(t_k-t_{k-1})^\alpha}{n} \geq  \dots \geq f(t_1) + \sum_{i=1}^{k-1}\frac{(t_{i+1}-t_i)^\alpha}{n}.
\]
Therefore, for any dyadic interval $I \subset \mathbb{R}^+$ of length $2^{-n}$ there is a constant $c_1$ such that, for any $k$ points $t_1 < \dots < t_k$ from $I \cap B_{f,n}^+$, we have 
\begin{equation}
\label{eq: hausdorff estimates}
\sum_{i=1}^{k-1}(t_{i+1}-t_i)^\alpha \leq c_1.
\end{equation}
%As usual denote the set of dyadic subintervals of $I^0$ of length $2^{-m}$, for $m \geq n$, by $\mathcal{F}_m(I^0)$.
Now for any positive integer $m$ and any dyadic interval 
%KB \mathcal{F} replaced with \di without further marking
%KB $J \in \mathcal{F}_m(I)$ 
$J \in \di_m(I)$ 
 such that $B_{f,n}^+ \cap J \neq \emptyset$
define $r_J^- = \inf\{B_{f,n}^+ \cap J\}$ and $r_J^+ = \sup\{B_{f,n}^+ \cap J\}$. 
The family $\mathcal{A}_m =\{[r_J^-,r_J^+]: J \in \di_m(I), B_{f,n}^+ \cap J \neq \emptyset\}$ is a cover of the set $B_{f,n}^+ \cap I$ with intervals of diameter at most $2^{-n-m}$.
From (\ref{eq: hausdorff estimates}) it is easy to see that 
\[
\sum_{J \in \mathcal{A}_m}|J|^\alpha = \sum_{J \in \di_m(I) \atop B_{f,n}^+ \cap J \neq \emptyset}(r_J^+ - r_J^-)^\alpha \leq c_1.
\]
%and $s_J^-, s_J^+ \in B_{f,n}^+ \cap J$ such that $r_J^+ - r_J^- \leq 2(s_J^+ - s_J^-)$. Let $m_J \geq m$ be the largest integer such that $r_J^+ - r_J^- \leq 2^{-m_J}$. If $m_J > m$ it is easy to see that the interval $[r_J^-,r_J^+]$ can be covered by two intervals from $\di_{m_J}$ (one interval might be enough), denote them by $I_J^1$ and $I_J^2$. Now we have 
%\begin{equation}
%\label{eq: cover 1}
%|I_J^1|^\alpha + |I_J^2|^\alpha = 2(2^{-\alpha m_J}) \leq 2^{1+\alpha}(r_J^+ - r_J^-)^\alpha \leq 2^{1+2\alpha}(s_J^+ - s_J^-)^\alpha.
%\end{equation}
%If $m_J=m$ simply define $I_J^-=I_J^+=J$ and similarly obtain 
%\begin{equation}
%\label{eq: cover 2}
%|I_J^-| \leq 2^{2\alpha}(s_J^+ - s_J^-)^\alpha.
%\end{equation}
%Now the family $\mathcal{A}_m = \{I_J^1, I_J^2 : J \in \mathcal{F}_m(I^0), J \cap B_{f,n}^+ \neq \emptyset\}$ is a cover of $I^0 \cap B_{f,n}^+$ with intervals of diameter no bigger than $2^{-m}$. Enumerate these intervals by $I_1, \dots ,I_N$ and notice that (\ref{eq: cover 1}) and (\ref{eq: cover 2}) imply that 
%\begin{equation}
%\label{eq: cover 3}
%\sum_{i=1}^N |I_i|^\alpha \leq 2^{1+2\alpha}\sum_{J \in \mathcal{F}_m(I^0) \atop J \cap B_{f,n}^+ \neq \emptyset}(s_J^+ - s_J^-)^\alpha \leq c_1.
%\end{equation}
Since $m$ was arbitrary, by the definition of Hausdorff dimension we obtain $\dim(B_{f,n}^+ \cap I) \leq \alpha$, for any dyadic interval $I$ of length $2^{-n}$. Therefore $\dim(B_{f,n}^+) \leq \alpha$ and taking the union over all $n$ gives $\dim(B_f^+) \leq \alpha$.

%Now let the sequence of compact intervals $(I_k)$ be a countable cover for $I^0 \cap B_{f,n}^+$. Moreover we can assume that for different $k$'s intervals $I_k$ are disjoint except maybe at the endpoints.
%If $I_k=[s_k^-,s_k^+]$ then define the interval $J_k=[r_k^-,r_k^+] \subset J$ so that $\inf(I_k \cap B_{f,n}^+) - r_k^- < 4^{-k}$ and $r_k^+ - \sup(I_k \cap B_{f,n}^+) < 4^{-k}$. Clearly $(J_k)$ is again a cover for $I^0 \cap B_{f,n}^+$, and for any $k$ we can choose $t_k^-, t_k^+ \in J_k \cap B_{f,n}^+$ so that $t_k^--r_k^- < 4^{-k}$ and $r_k^+ - t_k^+ < 4^{-k}$. Fix $N$ and notice that by (\ref{eq: hausdorff estimates}) we have
%\[
%\sum_{k=1}^N |J_k|^{1/2} \leq \sum_{k=1}^N \sqrt{t_k^+ - t_k^- + 2 (4^{-n})} \leq \sum_{k=1}^N\Big(\sqrt{t_k^+ - t_k^-} + 2^{-k+1}\Big) \leq C+2.
%\]
%Since $N$ was arbitrary we get that $\sum_{k=1}^\infty |J_k|^{1/2}$ is bounded and thus $\dim(B_{f,n}^+ \cap I^0) \leq 1/2$. Since the interval $I^0$ was arbitrary we get $\dim(B_{f,n}^+) \leq 1/2$, and by taking the union over all $n$ we obtain $\dim(B_f^+) \leq 1/2$.

%KB \item[(ii)]

(ii) 
It is enough to prove that for any $c>0$ the set 
\[
B_f(c)=\Big\{t \in \mathbb{R}^+ : \limsup_{h\downarrow 0} \frac{\left|f(t+h)-f(t)\right|}{h^\alpha} > c\Big\}
\]
has Hausdorff dimension bounded by
%KB $1/2$
$\alpha$
 from above.
Representing $B_f$ as the union $B_f =
%KB \cup_{n=1}^\infty
\bigcup_{n=1}^\infty
 B_f(1/n)$ will prove the claim.
Fix a dyadic interval $I$ and an $\epsilon >0$. For every $t \in B_f(c) \cap I$ we can find
%an interval $I_t=[t-h_t,t+h_t]$, such that 
$0 < h_t < \epsilon$ such that $\left|f(t+h_t)-f(t)\right| \geq ch_t^\alpha$. Denote the interval $[t-h_t,t+h_t]$ by $I_t$.
Clearly the family of intervals $\{I_t: t \in B_f(c) \cap I\}$ is a cover of the set $B_f(c) \cap I$.
By Besicovitch's covering theorem (see Theorem 2.6 in \cite{Mattila}) we can find an integer $k$, not depending on $\epsilon$, and a subcover which can be represented as a union of $k$ at most countable disjoint subfamilies $\{I_t: t \in S_i\}$. More precisely, there are sets $S_i \subset B_f(c) \cap I$, $1 \leq i \leq k$ such that
\[
B_f(c) \cap I \subset \bigcup_{j=1}^k \bigcup_{t \in S_j}I_t \ \text{ and } \ I_s \cap I_t =\emptyset, \ \text{for all} \ s,t \in S_i, s \neq t.
\]
Clearly for any $i$ we have
\[
\sum_{t \in S_i}|I_t|^\alpha = 2^\alpha \sum_{t \in S_i}h_t^\alpha \leq \frac{2^\alpha}{c}\sum_{t \in S_i}|f(t+h_t)-f(t)| \leq \frac{2^\alpha M}{c},
\]
where $M$ is the total variation of $f$ on the interval $I$.
The last inequality follows form the fact that the intervals $I_t$, $t \in S_i$ are disjoint. Now summing the above inequality over $1 \leq i \leq k$ we obtain 
\[
\sum_{i=1}^k \sum_{t \in S_i}|I_t|^\alpha \leq \frac{2^\alpha Mk}{c}.
\]
Since $\{I_t:t\in S_i, 1 \leq i \leq k\}$ is a cover of $B_f(c) \cap I$ of diameter at most $2\epsilon$ and neither $k$ nor $M$ depend on $\epsilon$, the $\alpha$-dimensional Hausdorff measure of $B_f(c) \cap I$ is finite. Therefore $\dim(B_f(c) \cap I) \leq \alpha$ and, since the dyadic interval $I$ is arbitrary, the claim follows.
%%%%%%%%%%%%%%%%%%%%%%%%%%%%%%%%%%%%%%%%%
\end{proof}

%In this section we show that the lower bound of $1/2$ on the Hausdorff dimension of the zero set of Brownian motion holds if we add arbitrary drift. To prove this we will use the percolation set method which will also be useful in the proof of Theorem ???.
%Take $0 < \beta < 1$ and $p=2^{-\beta}$.
The proof of Theorem \ref{thm:hausdorff_lower_bound} will be an application of the percolation method due to Hawkes \cite{Hawkes}, which we now describe. See Chapter 9 in \cite{MP} for more on this method.
%For an interval $I$ of the form $[k_12^{k_2},(k_1+1)2^{k_2}]$, $k_1, k_2\in \mathbb{Z}$ consider the family of dyadic subintervals of $I$ of length $2^{k_2-m}$ and denote it by $\mathcal{F}_m$.
%Let $I$ be an interval of the form $I=[k_12^{k_2},(k_1+1)2^{k_2}]$ and again denote by $\mathcal{F}_n(I)$ the set of all dyadic subintervals $\{[\ell2^{-k_2+n},(\ell+1)2^{-k_2+n}] \subset I: \ell,n \in \mathbb{Z}\}$. 
Fix a dyadic interval $I$, a real number $0 < \beta < 1$ and set $p=2^{-\beta}$ (the construction and the results can be stated for any interval $I$).
Construct the set $\mathrm{S}_{\beta}(1)$ by including in it (as subsets) each of the two dyadic intervals from $\di_1(I)$ with probability $p$, independently of each other. To construct $\mathrm{S}_{\beta}(m+1)$, for each dyadic interval $J \in \di_m(I)$ such that $J \subset \mathrm{S}_{\beta}(m)$, include each of its dyadic subintervals from $\di_{m+1}(I)$ in $\mathrm{S}_{\beta}(m+1)$ with probability $p$, independently of each other. We obtain a decreasing sequence of compact sets $(\mathrm{S}_\beta(m))$ whose intersection we denote by $\Gamma[\beta]$.
Comparing this to the Galton-Watson process with binomial offspring distribution $B(2,p)$, we see  that $\mathbb{P}(\Gamma[\beta] \neq \emptyset) > 0$. The following theorem is due to Hawkes (Theorem 6 in \cite{Hawkes}).
% shows how useful percolation sets are when calculating the Hausdorff dimension. 

\begin{theorem}[Hawkes]
For any set $A \subset I$, if $\mathbb{P}(A \cap \Gamma[\beta] \neq \emptyset )>0$ then $\dim(A) \geq \beta$.
\end{theorem}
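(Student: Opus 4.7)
The plan is a first-moment covering argument. Suppose $q := \mathbb{P}(A \cap \Gamma[\beta] \neq \emptyset) > 0$. I will establish the key pointwise estimate
\[
\mathbb{P}(\Gamma[\beta] \cap K \neq \emptyset) \;\leq\; C\,|K|^\beta
\]
for every subinterval $K \subset I$, with a constant $C$ depending only on $|I|$. Granting this, for an arbitrary countable cover $\{I_j\}_{j}$ of $A$ by intervals, the union bound yields
\[
q \;\leq\; \sum_{j} \mathbb{P}(\Gamma[\beta] \cap I_j \neq \emptyset) \;\leq\; C \sum_j |I_j|^\beta.
\]
Taking the infimum over all covers shows that the $\beta$-dimensional Hausdorff content of $A$ is at least $q/C > 0$, whence $\dim(A) \geq \beta$.

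To prove the key estimate, I first handle the dyadic case $K \in \di_n(I)$. By construction $\Gamma[\beta] \subset \mathrm{S}_\beta(n)$, and $\mathrm{S}_\beta(n)$ is a disjoint union of retained level-$n$ dyadic subintervals of $I$. Consequently, any point of $\Gamma[\beta]$ lying in the interior of $K$ forces $K \subset \mathrm{S}_\beta(n)$, which in turn requires the whole chain of $n$ dyadic ancestors of $K$ (ending with $K$ itself) to have been retained. The $n$ retention events are independent each of probability $p = 2^{-\beta}$, so
\[
\mathbb{P}(K \subset \mathrm{S}_\beta(n)) \;=\; p^n \;=\; 2^{-\beta n} \;=\; \bigl(|K|/|I|\bigr)^\beta.
\]
For a general subinterval $K \subset I$, I pick the largest $n$ with $2^{-n}|I| \geq |K|$; then $K$ is covered by at most two consecutive intervals of $\di_n(I)$, and one more union bound produces $\mathbb{P}(\Gamma[\beta] \cap K \neq \emptyset) \leq C\,|K|^\beta$ with $C$ depending only on $|I|$ and $\beta$.

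The main subtlety to watch is the possibility that $\Gamma[\beta] \cap K$ consists only of the two endpoints of $K$ (so $K$ itself is unretained but is adjacent to retained dyadic siblings). This is purely a bookkeeping matter: those endpoints also belong to the neighboring level-$n$ dyadic intervals, which are accounted for in a separate term of the union bound, at worst doubling the constant. Beyond this, the proof is a clean union bound coupled with the Galton--Watson computation $\mathbb{P}(K \subset \mathrm{S}_\beta(n)) = p^n$; notably, because we only use an upper tail estimate on the hitting probability of $\Gamma[\beta]$, no second moment estimate or Frostman energy argument is required.
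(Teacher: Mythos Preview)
Your argument is correct. The paper does not actually prove this statement: it is quoted as a known result due to Hawkes (Theorem~6 in \cite{Hawkes}) and then used as a black box, so there is no ``paper's own proof'' to compare against. Your first-moment covering argument is the standard route: the computation $\mathbb{P}(K\subset \mathrm{S}_\beta(n))=p^n=(|K|/|I|)^\beta$ for $K\in\di_n(I)$ is exactly the Galton--Watson survival probability along the spine, the reduction of a general interval to two dyadic ones of comparable length is routine, and the conclusion via positivity of $\beta$-Hausdorff content is clean. The endpoint bookkeeping you flag is handled correctly and only affects the constant. This is essentially the textbook proof (cf.\ Chapter~9 of \cite{MP}, which the paper also cites for background on the percolation method).
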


In the above construction of the percolation set we can change the retention probabilities of intervals at each level. If $p_n = 2^{-\beta_n}$ is a retention probability at level $n$, we denote the union of intervals kept at level $m$ by $S_{(\beta_n)}(m)$,
and the limiting percolation set by $\Gamma[(\beta_n)]$. We will use the following result which is an easy corollary of Hawkes' theorem.

\begin{corollary}
\label{cor:Hawkes}
Let $0<\beta<1$ and let $(\beta_n)$ be a sequence converging to $\beta$. For any set $A \subset I$, if $\mathbb{P}(A \cap \Gamma[(\beta_n)] \neq \emptyset)>0$ then $\dim(A) \geq \beta$.
\end{corollary}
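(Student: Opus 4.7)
The plan is to deduce the corollary from Hawkes' theorem by relating the variable-retention percolation $\Gamma[(\beta_n)]$ to a standard Hawkes percolation with a slightly smaller exponent $\beta' < \beta$. Fix such a $\beta'$; since $\beta_n \to \beta$, there exists an integer $N$ with $p_n := 2^{-\beta_n} \leq p' := 2^{-\beta'}$ for every $n \geq N$. A direct monotone coupling of $\Gamma[(\beta_n)]$ into $\Gamma[\beta']$ is blocked only by the finitely many initial levels where $p_n$ may exceed $p'$; handling these levels is the main (and only) subtle point of the proof.

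To sidestep this, I will introduce an auxiliary set $\Gamma^*$ on $I$, defined as the percolation that retains every dyadic subinterval at levels $1, \ldots, N-1$ deterministically and then thins with retention probability $p'$ at each subsequent level. Attaching an independent $\mathrm{Uniform}[0,1]$ variable $U_J$ to each dyadic subinterval $J$ of $I$ and driving both constructions by the same $U_J$'s (with $J$ kept in $\Gamma[(\beta_n)]$ iff its parent is kept and $U_J \leq p_n$, and $J$ kept in $\Gamma^*$ iff its parent is kept and either $n < N$ or $U_J \leq p'$), a straightforward induction on $n$ yields $\Gamma[(\beta_n)] \subset \Gamma^*$ almost surely. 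In particular,
\[
\mathbb{P}(A \cap \Gamma^* \neq \emptyset) \geq \mathbb{P}(A \cap \Gamma[(\beta_n)] \neq \emptyset) > 0.
\]

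Because the first $N-1$ levels of $\Gamma^*$ are deterministic, one obtains the decomposition $\Gamma^* = \bigcup_{J \in \di_{N-1}(I)} \Gamma^*_J$, where each $\Gamma^*_J$ is a standard Hawkes percolation with exponent $\beta'$ built on the dyadic interval $J$ in place of $I$, and the family $\{\Gamma^*_J\}$ is mutually independent. Since $\di_{N-1}(I)$ is finite, $\mathbb{P}(A \cap \Gamma^* \neq \emptyset) > 0$ forces $\mathbb{P}((A \cap J) \cap \Gamma^*_J \neq \emptyset) > 0$ for at least one $J \in \di_{N-1}(I)$. Applying Hawkes' theorem on $J$ then gives $\dim(A) \geq \dim(A \cap J) \geq \beta'$, and since $\beta' < \beta$ was arbitrary, the desired bound $\dim(A) \geq \beta$ follows.
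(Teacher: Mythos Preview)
Your proof is correct and follows essentially the same approach as the paper: both handle the finitely many early levels where $2^{-\beta_n}$ may exceed $2^{-\beta'}$ and then use a monotone coupling into a standard Hawkes percolation with exponent $\beta'$ (the paper's $\alpha$). The only cosmetic difference is packaging: the paper conditions on a specific realization $B$ of $S_{(\beta_n)}(m_0)$ with positive conditional intersection probability and compares to $\Gamma[\alpha]$ conditioned on $S_\alpha(m_0)=B$, whereas you replace the first $N-1$ levels by deterministic retention and then decompose over $J\in\di_{N-1}(I)$, applying Hawkes' theorem on a single $J$.
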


\begin{proof}
Let $\alpha < \beta$ and $m_0$ be such that $\alpha < \beta_m$ for all $m \geq  m_0$. There is a realization $B$ of $S_{(\beta_n)}(m_0)$ for which $\mathbb{P}(A \cap \Gamma[(\beta_n)]\neq \emptyset \mid S_{(\beta_n)}(m_0) = B)>0$. We have
\[\mathbb{P}(A \cap \Gamma[(\beta_n)] \neq \emptyset \mid S_{(\beta_n)}(m_0) = B) \leq \mathbb{P}(A \cap \Gamma[\alpha] \neq \emptyset \mid S_\alpha(m_0) = B),
\]
since we can couple two percolation processes so that for $m \geq m_0$, if an interval $J \in \di_m(I)$ is retained in the percolation process with retention probabilities $(2^{-\beta_n})$, it is also retained in the process with retention probability $2^{-\alpha}$. Since $\mathbb{P}(S_\alpha(m_0)=B)>0$ we get $\mathbb{P}(A \cap \Gamma[\alpha] \neq \emptyset)>0$ and, by Hawkes' theorem, $\dim(A) \geq \alpha$. Since $\alpha < \beta$ was arbitrary the claim follows.
\end{proof}

%\begin{proposition}
%  \label{prop:any_drift}
%For any function $f$  the Hausdorff dimension of the zero set of $B(t)-f(t)$ is greater or equal than $1/2$ with positive probability.
%\end{proposition}

\begin{proof}[Proof of Theorem \ref{thm:hausdorff_lower_bound}]
It is enough to prove that the Hausdorff dimension of $\zero(B-f) \cap [1,2]$ is greater
%KB or equal than
than or equal to
 $1/2$ with positive probability.
%For a sequence $\beta_n$ converging to $1/2$ from below let $\Gamma[\beta_n]$ be the percolation set where retention probability at level $n$ is equal to $2^{-\beta_n}$.
Let $(\beta_n)$ be a sequence converging to $1/2$ from below, to be chosen later. Consider the percolation process on the interval $[1,2]$ with retention probabilities $(2^{-\beta_n})$, independent of Brownian motion.
%First we will prove that for any interval $I$ the Hausdorff dimension of the zero set of $B-f$ inside the interval $I$ is bounded from below byy $1/2$ with positive probability. By Hawkes theorem we only need to prove that $\mathbb{P}(\text{zero}(B-f)\cap \Gamma[\beta_n] )>0$ where $\Gamma[\beta_n]$ is a percolation set independent of Brownian motion and $\beta_n$ a sequence converging to $1/2$ fro below..
%Denote the set binary subintervals of $[1,2]$ of length $2^{-m}$ by $\mathcal{F}_m([1,2])$ and enumerate them by $I_1, \dots, I_{2^m}$.
Fix a positive integer $m$. 
For a dyadic interval $I \in \di_m([1,2])$ denote by $t_I$ its center and, for a fixed $\epsilon > 0$, consider the event
\[
%KB Z_m(I) replaced with \kz_{m,\eps}(I)
\kz_{m,\eps}(I) = \{I \subset \mathrm{S}_{(\beta_n)}(m), |B(t_I) - f(t_I)| \leq \epsilon\}.
\]
Define $Y_{m,\epsilon} = \sum_{I \in \di_m([1,2])}\mathbf{1}(\kz_{m,\eps}(I))$, where $\mathbf{1}(\kz_{m,\eps}(I))$ is the indicator function of the event $\kz_{m,\eps}(I)$.
%For $\epsilon > 0$ define $Y_{m,\epsilon} = \sum_{i=1}^{2^m}\mathbf{1}(I_i \subset \mathrm{S}_{(\beta_n)}(m), |B(t_i) - f(t_i)| \leq \epsilon)$, where $\mathbf{1}()$ is the indicator function.
Using trivial bounds on the transition density of Brownian motion, the first moment of $Y_{m, \epsilon}$ can be estimated simply by
\begin{equation}
  \label{eq:first_moment_any_drift}
%\mathbb{E}(Y_{m,\epsilon}) = \sum_{i=1}^{2^m} \mathbb{P}(I_i \subset \mathrm{S}_{(\beta_n)}(m), |B(t_i) - f(t_i)| \leq \epsilon),  
c_12^{m-\gamma_m}\epsilon \leq \mathbb{E}(Y_{m,\epsilon})  \leq 2\epsilon2^{m-\gamma_m},
\end{equation}
for some constant $c_1$ depending only on $\max_{t \in [1,2]}|f(t)|$,  and where $\gamma_m = \beta_1 + \dots + \beta_m$.
In the same way, for $I<J$ we can estimate the conditional probability
\begin{equation}
\label{eq: conditional hawkes}
\mathbb{P}\Big(|B(t_J)-f(t_J)| \leq \epsilon
%KB \Big |
\ \Big | \
 |B(t_I)-f(t_I)| \leq \epsilon\Big) \leq 2 \epsilon (t_J-t_I)^{-1/2}.
\end{equation}
For $I,J \in \di_m([1,2])$ such that $I < J$ let $\ell$ be the largest integer so that both $I$ and $J$ are contained in a single interval from $\di_\ell([1,2])$. In other words there are consecutive intervals $I^0, J^0 \in \di_{\ell+1}([1,2])$, contained in a single interval from $\di_\ell([1,2])$, such that $I \subset I^0$ and $J \subset J^0$.
%For $i \neq j$ recall that $i \wedge j$ was defined as the largest integer such that the binary expansions of $i$ and $j$ agree in first $\ell$ places, or in other words the largest $\ell$ such that intervals $I_i$ and  $I_j$ are contained in a single interval of length $\di_\ell([1,2])$. 
Then 
\begin{equation}
\label{eq: conditional hawkes 2}
\mathbb{P}(I \cup J \subset \mathrm{S}_{(\beta_n)}(m)) = 2^{-2\gamma_m + \gamma_\ell}.
\end{equation} 
Using independence, (\ref{eq: conditional hawkes}) and (\ref{eq: conditional hawkes 2})
%, for a fixed $\ell < m$ and $I^0$ and $J^0$ as above
%a binary interval $J$ of length $2^{-\ell}$ we can write
\begin{equation}
\label{eq:second_moment_help_any_drift}
\mathbb{P}(\kz_{m,\eps}(I) \cap \kz_{m,\eps}(J)) \leq (2\epsilon)^2 2^{-2\gamma_m + \gamma_\ell} (t_J-t_I)^{-1/2}.
%\sum_{i<j \atop I_i \subset I^0, I_j \subset J^0}\mathbb{P}(I_i\cup I_j\subset \mathrm{S}_{(\beta_n)}(m), \max\{|B(t_i) - f(t_i)|, |B(t_j)-f(t_j)|\} \leq \epsilon)  \\
%\leq (2\epsilon)^2 2^{-2\gamma_m + \gamma_\ell} \sum_{k=1}^{2^{m-\ell}}k2^{m/2}k^{-1/2} 
%\leq  C_2 (2\epsilon)^2 2^{m/2-2\gamma_m + \gamma_\ell} 2^{3/2(m-\ell)} \leq c_2\epsilon^22^{2(m-\gamma_m) + \gamma_\ell-3\ell/2},
\end{equation}
Summing (\ref{eq:second_moment_help_any_drift}) over all $I \subset I^0$ and $J \subset J^0$ for a fixed $I^0$ and $J^0$ as above
\begin{multline*}
\sum_{I \subset I^0, J \subset J^0}\mathbb{P}(\kz_{m,\eps}(I) \cap \kz_{m,\eps}(J)) \leq (2\epsilon)^2 2^{-2\gamma_m + \gamma_\ell} \sum_{k=1}^{2^{m-\ell}}k(k2^{-m})^{-1/2} \\
\leq  c_2 (2\epsilon)^2 2^{m/2-2\gamma_m + \gamma_\ell} 2^{3/2(m-\ell)} \leq 4c_2\epsilon^22^{2(m-\gamma_m) + \gamma_\ell-3\ell/2},
\end{multline*}
for some universal constant $c_2>0$.
Summing this over all $I^0$ and $J^0$ and all $0 \leq \ell \leq m-1$, and using (\ref{eq:first_moment_any_drift}), we can estimate the second moment
\begin{align}
  \label{eq:second_moment_any_drift}
 \mathbb{E}(Y_{m,\epsilon}^2)
& = 2\sum_{I,J \in \di_m([1,2]), I<J} \mathbb{P}(\kz_{m,\eps}(I) \cap \kz_{m,\eps}(J)) + \mathbb{E}(Y_{m,\epsilon}) \nonumber \\
% \leq \sum_{\ell=0}^{m-1}\sum_{i<j; i \wedge j =\ell}\mathbb{P}(I_i\cup I_j\subset \mathrm{S}_{(\beta_n)}(m), \max\{|B(t_i) - f(t_i)|, |B(t_j)-f(t_j)|\} \leq \epsilon) + \mathbb{E}(X_{m,\epsilon})
%\nonumber \\
&\leq 8c_2\epsilon^2\sum_{\ell=1}^{m-1} \Big(2^\ell 2^{2(m-\gamma_m) + \gamma_\ell-3\ell/2} \Big) +  2\epsilon 2^{m-\gamma_m} \nonumber \\
& \leq  8c_2 \epsilon^2 2^{2(m-\gamma_m)}\sum_{\ell=0}^{m-1} 2^{\gamma_\ell-\ell/2} + 2\epsilon 2^{m-\gamma_m}.
\end{align}
Now choose a sequence $(\beta_n)$ which converges to $1/2$ from below, and so that the series $\sum_{\ell=0}^{\infty} 2^{\gamma_\ell-\ell/2}$ converges (for example take $\beta_n = 1/2 - 2/(n \log 2)$, then $\gamma_\ell$ is up to an additive constant equal to $\ell/2 - 2\log_2 \ell$).
With such $(\beta_n)$ and for the sequence $\epsilon_m = 2^{-(m-\gamma_m)}$ that converges to zero, we define 
%KB Z_m replaced with \kzz_m
$\kzz_m = Y_{m,\epsilon_m}$. By (\ref{eq:first_moment_any_drift}) we have $\mathbb{E}(V_m) \geq c_1 > 0$ and by (\ref{eq:second_moment_any_drift}) we have $\mathbb{E}(V_m^2) \leq C_3 < \infty$, where $C_3$ is a universal constant.
%the first moment of $\kzz_m$ is bounded away from zero and the second moment from above. 
Remark \ref{6.30.1} (ii) yields  $\mathbb{P}(\kzz_m > 0) \geq c_1^2/C_3$. Thus the probability of the event $\lim \sup_m\{\kzz_m > 0\}$ is also bounded from below by $c_1^2/C_3$. On this event there is a sequence $(s_m)$, such that each $s_m$ is the center of a dyadic interval from $\di_{k_m}([1,2])$ contained in $S_{(\beta_n)}(k_m)$, and such that $|B(s_m)-f(s_m)| \leq \epsilon_{k_m}$.
%KB It will contain
The sequence $(s_m)$ contains
 a subsequence that converges to some $s \in \Gamma[(\beta_n)]$ such that $B(s) = f(s)$. Therefore we have $\mathbb{P}(\zero(B-f) \cap \Gamma[(\beta_n)] \neq \emptyset) \geq c_1^2/C_3 > 0$.  By the independence of Brownian motion and the percolation process and Corollary \ref{cor:Hawkes} it follows that  
\begin{equation}\label{eq: Hausdorff uniform bound Holder 1/2}
\mathbb{P}(\dim(\zero(B-f)) \geq 1/2) \geq c_1^2/C_3 > 0.
\end{equation}

\end{proof}

%Let $B$ be the standard $1$-dimensional Brownian motion with $B(0)=0$ and denote the set of zeros of $B-f$ for a function $f$ by $Zero(B-f)$.

%\begin{proposition}  \label{set A_f has dim less than 1/2 }
%For all monotone functions $f$ there exists a set $A_f$ such that $\dim(A_f)\leq \frac{1}{2}$ and the set of all isolated zeros of the process $B-f$ is contained in $A_f$, almost surely.
%\end{proposition}

\begin{remark}\label{rem: uniform bound for holder}
Note that, for a continuous function $f$ defined on $[1,2]$, the lower bound from \eqref{eq: Hausdorff uniform bound Holder 1/2} depends only on $\max_{t \in [1,2]}|f(t)|$.
\end{remark}

%\begin{proposition}  \label{dim of zero set for holder functions}
%If $f$ is $\frac{1}{2}$-\holder continuous function, then $\dim(Zero(B-f))\leq \frac{1}{2}$ almost surely.
%\end{proposition}
%We finish the paper with the proof of Theorem \ref{thm:hausdorff_upper_bound}.
\begin{proof}[Proof of Theorem \ref{thm:hausdorff_upper_bound}]
It is enough to prove that $\dim(\zero(B-f)\cap I) \leq 1/2$, for any dyadic interval $I$.
%of the form $I=[k_12^{k_2},(k_1+1)2^{k_2}]$, $k_1,k_2 \in\mathbb{Z}$, $k_1 > 0$.
%W.l.o.g. we assume $f \colon [1,2] \to \RR$. 
%We divide the proof in two parts. 
First we will prove the claim for functions which are $1/2$-\holder on compact intervals.
%\begin{enumerate} 

Assume $f$ is a $1/2$-\holder continuous function on $I$, that is $|f(t)-f(s)| \leq c_0 |t-s|^{1/2}$ for some $c_0>0$ and all $s,t \in I$. 
%We use the dyadic subintervals of $I$ to cover the set $\zero(B_f) \cap I$.
%To get an upper bound on the Hausdorff dimension of Zero set of $B-f$ we use a covering consisting of dyadic intervals. 
%Let $F_k$ again be the collection of intervals $[1+ j2^{-k}, 1+(j+1)2^{-k}]$ for $j = 0, . . . , 2^k-1$ and 
For an interval $J=[s_1,s_2] \subset I$ set $Z(J) = 1$, if there exists $t \in J$ such that $B(t)=f(t)$, and $Z(J) = 0$ otherwise, and define the interval $\overline{J} = [f(s_1)-c_0 \sqrt{s_2-s_1},f(s_1)+c_0 \sqrt{s_2-s_1}]$.
%To estimate
%the dimension of the zero set we need an estimate for the probability that $Z(I) = 1$, i.e.
%for the probability that a given interval contains a zero:
%Since $f$ is $\frac{1}{2}$-\holder continuous, there exists a constant $c$ such that
%\begin{align*}
%    \left| f(t) - f(t+h)\right| \leq c\left| h\right| ^{\frac{1}{2}}
%\end{align*}
%holds for any $t \in[1,2]$ and for all $h$ with $(t,t+h)\in[1,2]$.
%Event $Z(J)=1$ clearly implies that Brownian motion hits a side of the rectangle $[s,t] \times [f(s)-c_0 (t-s)^{1/2},f(s)+c_0 (t-s)^{1/2}]$. The probability of this event can in turn be bounded by $c_1\sqrt{t-s}$ by parts (iv) and (vi) of Lemma \ref{lemma: hitting estimates}, which yields $\mathbb{P}(Z(J)=1) \leq c_1\sqrt{t-s}$.
On the event $Z(J)=1$ define the stopping time $\tau=\min\{\zero(B-f)\cap J\}$. Since $(\tau,B(\tau)) \in J \times \overline{J}$,
%KB by
by the
 strong Markov property, conditional on the sigma algebra $\mathcal{F}_\tau$ and on the event $\{Z(J)=1\}$, the probability
%KB that $B(s_2)= \overline{J}$
$p_1$ that $B(s_2)\in \overline{J}$
 is equal to the probability that $B(1) \in \overline{J}_\tau$, where $\overline{J}_\tau$ is the interval $\overline{J}$ shifted by $-B(\tau)$ and scaled by $(s_2-\tau)^{-1/2}$. Since the interval $\overline{J}_\tau$ has length at least $2c_0$ and contains the origin,
%KB this probability 
$p_1$
 is bounded from below by a constant not depending on the choice of the interval $J$; see also arguments in the proof of Theorem \ref{prop: hitting characterisation} leading to (\ref{eq: approximation of probabilities_1-5}). Therefore, for some
%KB $c_1>0$
$c_1 < \infty$
 we obtain
$\mathbb{P}(B(t) \in \overline{J}\mid Z(J)=1) \geq c_1^{-1}$. This implies 
\begin{equation}
\label{eq: hausdorff conditional}
\mathbb{P}(Z(J)=1) \leq c_1 \mathbb{P}(B(t) \in \overline{J})\leq c_2|J|^{1/2},
\end{equation} 
for some $c_2>0$.
%
%
%
%With Lemma \ref{lemma: hitting estimates} we know an upper bound for the probability of the event $A_{\epsilon}$ which we define as the event of Brownian motion hitting a box $R_{\epsilon}=[a, a + \epsilon]\times[f(a)-c\sqrt{\epsilon}, f(a)+c\sqrt{\epsilon}]$ for some constant $c$ with $(a, a + \epsilon)\in[1,2]$.
%There is a constant $\delta$ such that
%\begin{align}
%    \PP(A_{\epsilon}) \leq \delta\sqrt{\epsilon}.
%\end{align}
%
%Let's define $T:= \inf \left\{t \geq a: B(t)=f(t) \right\}$ for an $a$ with $(a, a + \epsilon)\subset[1,2]$.
%Since $A_{\epsilon} \supseteq \left\{B(t)=f(t), t\in[a, a + \epsilon]  \right\}$
%
%\begin{align}
%    \PP(A_{\epsilon}) \geq \PP(\left\{B(t)=f(t), t\in[a, a + \epsilon]  \right\}).
%\end{align}
%
%It follows that for any $a,\epsilon  > 0$ with $(a, a + \epsilon)\in[1,2]$
%\begin{align}
%    \PP\left(\left\{B(t)=f(t), t\in[a, a + \epsilon]  \right\}\right) \leq \delta\sqrt{\epsilon}.
%\end{align}

%We have thus shown that, for sufficiently large integer $k$, we have
%$\Ew[Z(I)] \leq \delta 2^{-\frac{k}{2}}$, for all $I \in F_k$. 
Now consider the covering $\mathcal{A}_k$ of the set $\zero(B-f)\cap I$, consisting of the dyadic intervals from  $\di_k(I)$ which intersect the set $\zero(B-f)\cap I$. Since every interval in $\mathcal{A}_k$ has length $2^{-k}|I|$, by (\ref{eq: hausdorff conditional}) we have
%Hence the covering of the set $\left\{t \in [1, 2] : (B-f)(t) = 0\right\}$ by all
%$I \in F_k$ with $Z(I) = 1$ has an expected square root value of
\begin{multline*}
    \mathbb{E}\Bigg(\sum_{J \in \mathcal{A}_k} \left|J\right|^{\frac{1}{2}} \Bigg)= \mathbb{E}\Bigg( \sum_{J \in \di_k(I)} Z(J) 2^{-\frac{k}{2}}|I|^{1/2} \Bigg)  \\
% = \sum_{I \in F_k} \Ew\left[Z(I)\right] 2^{-\frac{k}{2}} 
\leq 2^k c_2 2^{-\frac{k}{2}}|I|^{1/2} 2^{-\frac{k}{2}}|I|^{1/2} = c_2|I|.
\end{multline*}
Fatou's lemma implies
\begin{align*}
     \mathbb{E}\Bigg( \liminf_{k\to \infty} \sum_{J \in \mathcal{A}_k} |J|^{1/2} \Bigg) \leq  \liminf_{k\rightarrow \infty} \mathbb{E}\Bigg( \sum_{J \in A_k} |J|^{1/2} \Bigg) \leq c_2|I|.
\end{align*}
%KB Therefore, almost surely we can find a sequence of coverings of $\zero(B-f)\cap I$, of diameters going to zero and bounded square root value, which implies $\dim(\zero(B-f) \cap I) \leq 1/2$.
 Therefore, almost surely, we can find a sequence of coverings $\{J^k_n, n\geq 1\}$ of $\zero(B-f)\cap I$, with $\lim_{k\to \infty}\sup_{n\geq 1} |J^k_n| =0$ and $\limsup_{k\to \infty}\sum_{n\geq 1} |J^k_n|^{1/2} <\infty$. This implies that $\dim(\zero(B-f) \cap I) \leq 1/2$, a.s.
  
%Hence the liminf is almost surely finite, which means that there exists a family of coverings
%with maximal diameter going to zero and bounded square root value. This implies that, almost
%surely, $\HH^{\frac{1}{2}}(Zero(B-f)) < \infty$ and $\dim(Zero(B-f))\leq \frac{1}{2}$.

%\end{proof}
%
%\begin{proposition}  \label{dim of zero set for monotone functions}
%If $f$ is a monotone function, then $\dim(Zero(B-f))\leq \frac{1}{2}$ almost surely.
%\end{proposition}
%
%\begin{proof}

Now let $f$ be of bounded variation on compact intervals and define the set 
\[B_f=\left\{ t\in \mathbb{R}^+ : \limsup_{h\downarrow 0} \left|f(t+h)-f(t)\right|h^{-1/2} \geq 1 \right\}.\]
%Recall the definition of the set $B_f$ from Lemma \ref{lemma:set_with_isolated_zeros} for $\alpha=1/2$, namely
%$B_f=\left\{ t\in \mathbb{R}^+ : \limsup_{h\downarrow 0} \left|f(t+h)-f(t)\right|h^{-\alpha} > 1 \right\}$. 
By part (ii) of Lemma \ref{lemma:set_with_isolated_zeros} we have $\dim(B_{f}) \leq 1/2$.
Therefore, it is enough to bound the dimension of the zero set in the complement, $\zero(B-f) \cap B_f^c$, where $B_f^c = I \backslash B_f$.
%\begin{align}
%B_f=\left\{ t\in[1,2]\middle | \limsup_{h\downarrow 0} \frac{\left|f(t+h)-f(t)\right|}{\sqrt{h}}\geq 1 \right\}
%\end{align}
%is bounded from baove by $1/2$. So it is enough to prove that the Hausdorff dimension of the set of zeros in the complment has Hausdorff dimension at most $1/2$.
We can cover $B_f^c$ by a countable union of the sets 
%on which $f$ is $\frac{1}{2}$-\holder continuous, $B_f^c \subseteq \bigcup_{n\in \NN} C_{f,n}$ with
\begin{align*}
D_{f,n}= \left\{ t\in I : \left| f(t+h) - f(t)\right| \leq \sqrt{h}, \text{ for all } 0 \leq  h < 2^{-n} \right\}.
\end{align*}
For a fixed $n$, all $t_1, t_2 \in D_{f,n}$ with $|t_1 - t_2| < 2^{-n}$ satisfy $\left| f(t_2) - f(t_1)\right| \leq |t_1 - t_2|^{1/2}$. Therefore, the restriction $f|_{D_{f,n}}$ is $1/2$-\holder continuous, that is,
for some positive constant $c_3$ and all $t_1,t_2 \in D_{f,n}$ we have  $\left| f(t_2) - f(t_1)\right| \leq c_3 |t_1 - t_2|^{1/2}$. 
Define the function $f_n\colon I\to \mathbb{R}$ to be $f_n(t)=f(t)$ for all $t\in D_{f,n}$.
%KB and linearly interpolated for all $t\in I \setminus  D_{f,n}$. 
We will define $f_n$ for $t\in I \setminus  D_{f,n}$ using linear interpolation, in a sense. If  $D_{f,n} = \emptyset$ then set $f_n$ to be any constant function. Assume that  $D_{f,n} \ne \emptyset$.
%KB More precisely, first note
Note
 that the set $D_{f,n}$ is closed from the left, that is if $(s_k)$ is an increasing sequence of points in $D_{f,n}$ converging to some $s$, then $s \in D_{f,n}$.
%KB Also if $(s_k) \subset D_{f,n}$ is a decreasing sequence converging to $s$ it is not hard to check that the right limit $\lim_{t \downarrow s}f(t)$ exists.
Since $f$ has bounded variation, by assumption, the right limit $\lim_{t \downarrow s}f(t)$ exists for all $s\in I$. 
 Thus if $t \in I \backslash D_{f,n}$ define $t_l = \max\{s \in D_{f,n}: s<t\}$, $t_r=\inf\{s \in D_{f,n}: t<s\}$ as well as $a_l=f(t_l)$ and $a_r=\lim_{s \downarrow t_r}f(s)$ and notice that $|a_r - a_l| \leq c_3\sqrt{t_r-t_l}$. Now define $f_n$ on $[t_l,t_r]$ to be linear with $f_n(t_l)=a_l$ and $f_n(t_r)=a_r$. If $t_l$ does not exist, define $t_l$ as the left endpoint of $I$ and $a_l = a_r$, and do similarly if $t_r$ does not exist. 
%(if for all $n$, both $t_l$ and $t_r$ do not exist for some $t \in I$, then $I = B_f$ which is impossible since $\dim(B_f) \leq 1/2$). 
Clearly $f_n$ is $1/2$-H\"{o}lder continuous with the same constant $c_3$.
%Obviously, $f_n$ is $1/2$-\holder continuous. 
We can apply the first part of the proof to $f_{n}$, to conclude that the set $\zero(B-f_n)$ has Hausdorff dimension at most $1/2$ almost surely. Since $\zero(B-f) \cap D_{f,n} \subset \zero(B-f_n)$, we obtain $\dim \left(\zero(B-f)\cap D_{f,n} \right)\leq 1/2$ and $\dim \left(\zero(B-f)\cap \bigcup_{n\in \NN} D_{f,n} \right)\leq 1/2$. Since $B_f^c \subset \bigcup_{n \in \mathbb{N}}D_{f,n}$ the claim follows.
\end{proof}

We finish the paper with the proof of Corollary \ref{cor:sharp_haudorff_for_holder}.

\begin{proof}[Proof of Corollary \ref{cor:sharp_haudorff_for_holder}]
Assume that $|f(t)-f(s)| \leq C\sqrt{|t-s|}$ for all $0<s,t<1$.
For every positive integer $n$ define a continuous function $f_n \colon [1,2] \to \mathbb{R}$ as $f_n(t) = 2^{n/2}f(t2^{-n})$, for all $t\in [1,2]$. Clearly $\max_{t\in[1,2]}|f_n(t)| \leq C\sqrt{2}$ and, by Remark \ref{rem: uniform bound for holder} and Brownian scaling, there is a constant $c>0$ such that 
\[
\mathbb{P}(\dim(\zero(B-f)\cap [2^{-n},2^{-n+1}]) \geq 1/2) = \mathbb{P}(\dim(\zero(B-f_n)) \geq 1/2) \geq c.
\]
Thus by Blumenthal's zero-one law, almost surely, there are infinitely many integers $n$ such that the Hausdorff dimension of the set $\zero(B-f)\cap [2^{-n},2^{-n+1}]$ is greater or equal than $1/2$. The upper bound follows from Theorem \ref{thm:hausdorff_upper_bound}.
\end{proof}

\section*{Acknowledgments}

Authors would like to thank David Aldous, Zhen-Qing Chen, Serguei Denissov and Steven Evans for helpful discussions and references.
Ton\'{c}i Antunovi\'{c} and Julia Ruscher would like to thank Microsoft Research where this work was completed.
%KB new sentence
Krzysztof Burdzy was supported in part by NSF Grant DMS-0906743 and by grant N N201 397137, MNiSW, Poland.

%\bibliographystyle{plain}
%\bibliography{isolated_zeros}

%\begin{comment}

%\end{comment}

\end{document}